\documentclass[12pt,reqno]{amsart}
\usepackage[margin=1in]{geometry}
\usepackage{amsmath,amssymb,amsthm,amsfonts,mathtools}
\usepackage{mathrsfs}
\usepackage{enumitem}
\usepackage{tikz-cd}
\usepackage{xcolor}
\usepackage[colorlinks=true,linkcolor=blue!45!black,citecolor=blue!45!black,urlcolor=blue!45!black]{hyperref}

\numberwithin{equation}{section}

\theoremstyle{plain}
\newtheorem{theorem}{Theorem}[section]
\newtheorem{proposition}[theorem]{Proposition}
\newtheorem{lemma}[theorem]{Lemma}
\newtheorem{corollary}[theorem]{Corollary}

\theoremstyle{definition}
\newtheorem{definition}[theorem]{Definition}
\newtheorem{example}[theorem]{Example}

\theoremstyle{remark}
\newtheorem{remark}[theorem]{Remark}

\newcommand{\CC}{\mathbb{C}}
\newcommand{\ZZ}{\mathbb{Z}}
\newcommand{\QQ}{\mathbb{Q}}
\newcommand{\PP}{\mathbb{P}}

\newcommand{\Xsp}{X}
\newcommand{\wX}{\omega_{\Xsp}}
\newcommand{\DX}{\mathcal{D}_{\Xsp}}

\newcommand{\sO}{\mathcal{O}}
\newcommand{\sL}{\mathcal{L}}
\newcommand{\sA}{\mathcal{A}}
\newcommand{\sC}{\mathcal{C}}
\newcommand{\sF}{\mathcal{F}}
\newcommand{\sD}{\mathcal{D}}
\newcommand{\sG}{\mathcal{G}}
\newcommand{\sP}{\mathcal{P}}
\newcommand{\sQ}{\mathcal{Q}}
\newcommand{\sU}{\mathcal{U}}
\newcommand{\sW}{\mathcal{W}}
\newcommand{\CE}{\sC_{E}}
\newcommand{\Sym}{\mathrm{Sym}}
\newcommand{\ch}{\mathrm{ch}}
\DeclareMathOperator{\Tr}{Tr}
\DeclareMathOperator{\tr}{tr}
\DeclareMathOperator{\trom}{tr_{\omega}}
\newcommand{\Trch}{\Tr_{\ch}}
\newcommand{\TrLie}{\Tr_{\mathrm{Lie}}}
\newcommand{\Wch}{\mathcal{W}_{\ch}}

\newcommand{\WLie}{\mathcal{W}_{\mathrm{Lie}}}
\newcommand{\Psing}{P_{\mathrm{sing}}}
\newcommand{\Qreg}{Q_{\mathrm{reg}}}
\newcommand{\OBV}{O_{\mathrm{BV}}}
\newcommand{\DBV}{\Delta_{\mathrm{BV}}}
\newcommand{\HH}{\mathbb{H}}
\newcommand{\HXE}{\mathbb{H}(\Xsp,E)}

\newcommand{\Lflat}{L^{\flat}}
\newcommand{\dch}{d^{\ch}}
\newcommand{\dbar}{\bar\partial}
\newcommand{\pd}{\partial}
\newcommand{\even}{\bar 0}
\newcommand{\odd}{\bar 1}

\newcommand{\Aut}{\mathrm{Aut}}
\newcommand{\End}{\mathrm{End}}

\newcommand{\id}{\mathrm{Id}}
\newcommand{\pf}{\mathrm{pf}}

\newcommand{\hgl}{\widehat{\mathfrak{gl}}}
\newcommand{\gl}{\mathfrak{gl}}
\DeclareMathOperator{\gr}{gr}
\DeclareMathOperator{\GL}{GL}

\begin{document}

\title[Trace maps on chiral Clifford algebras]{Trace maps on chiral Clifford algebras for the rank two fermionic vertex operator superalgebra}

\author{A. Zuevsky}
\address{Institute of Mathematics, Czech Academy of Sciences, \v Zitn\'a 25, 115 67 Praha 1, Czech Republic}
\email{zuevsky@yahoo.com}

\subjclass[2020]{Primary 81T40, 17B69; Secondary 14D21, 58J52, 30F30}
\keywords{Chiral algebras, Clifford algebras, vertex operator superalgebras, Batalin-Vilkovisky formalism, bc-systems, Szeg\H{o} kernel, analytic torsion, quantum master equation}

\begin{abstract}
For a holomorphic vector bundle $F$ of rank $r$ on a smooth Riemann surface $X$ we construct a trace map on the chiral homology of the chiral Clifford algebra $\CE$ attached to the purely odd bundle $E=\Pi(F\oplus F^\vee\otimes\omega_X)$. 
 It is the chiral-algebraic realization of the rank two fermionic vertex operator superalgebra. The free-fermion (bc-type) conformal field theory built from a dual pair of odd fields $\beta_i\in F$, $\gamma^j\in F^\vee\otimes\omega_X$. We give a complete construction of this vertex operator superalgebra, its associated vertex superalgebra bundle, and the isomorphism between the latter's chiral algebra and the chiral envelope $\CE$. Using the Batalin-Vilkovisky (BV) formalism together with Feynman diagrams we prove that the resulting trace map
\[
\Trch : \bigl(\widetilde\sC^{\ch}(X,\CE)_{\sQ},\, \dch_{\CE}\bigr)\longrightarrow (\OBV,-\DBV)
\]
is a chain map satisfying a generalized quantum master equation and is a quasi-isomorphism, generalizing to the odd/Clifford setting the trace map on chiral Weyl algebras constructed by Gui for symplectic bosons. We establish existence, homotopy uniqueness, and functoriality (including explicit metric-independence up to chain homotopy) of the trace map, prove cyclicity of the relevant supertrace, 
 and verify nilpotency of $\DBV$, the graded Leibniz rule, $d^2=0$ for every differential introduced. As an application we compute the trace map on a modified affine current and on a modified energy-momentum tensor, recovering,
 purely algebraically from the chiral chain complex, Fay's classical formulas for the variation of the fermionic (Ray-Singer) analytic torsion along the moduli of the bundle $F$ and along the moduli of the curve $X$. 
\end{abstract}

\maketitle
\tableofcontents

%%%%%%%%%%%%%%%%%%%%%%%%%%%%%%%%%%%%%%%%%%%%%%%%%%%%%%%%%%%%%%%%%%%%%%%%%%
\section{Introduction}

\subsection{Background}
Two-dimensional chiral conformal field theory admits a rigorous algebraic model through the chiral algebras of Beilinson and Drinfeld \cite{BD04}. Costello \cite{Costello10,Costello12} gave a geometric construction of the Witten genus \cite{Witten87} by applying Batalin-Vilkovisky (BV) quantization \cite{BV81} to the $\beta\gamma$-system on an elliptic curve, and Gui-Li \cite{GuiLi21} recast this construction, together with its correlation functions, as a  trace map on the chiral homology of the chiral Weyl algebra of the elliptic curve, resulting in the Batalin-Vilkovisky algebra of zero modes. Gui \cite{Gui23} extended this trace map from the elliptic curve to an arbitrary smooth Riemann surface $X$ and to an arbitrary holomorphic symplectic vector bundle $E$, obtaining
\[
\Tr_{\sA_E}: \bigl(\widetilde\sC^{\ch}(X,\sA_E)_{\sQ},d^{\ch}_{\sA_E}\bigr)\longrightarrow(\OBV,-\DBV),
\]
a chain-level realization of a deformed Hochschild-Kostant-Rosenberg (HKR) quasi-isomorphism, and used it to recover Fay's formula \cite{Fay92} for the variation of analytic torsion along the moduli of $E$ and along the moduli of $X$, purely from the chiral homology of $\sA_E$.

The chiral Weyl algebra $\sA_E$ quantizes the symplectic bosons attached to $E$, i.e.,  the free bosonic first-order system with Lagrangian $\int_X\langle\phi,\bar\partial\phi\rangle$, $\phi\in\Omega^{0,\bullet}(X,E)$. Its odd-parity counterpart - the free fermionic first-order system, whose local observables form a chiral Clifford algebra - governs the $bc$-ghosts of two-dimensional gravity, the worldsheet fermions of the superstring, and, more generally, every chiral free-fermion vertex operator superalgebra (VOSA).  
 The initial idea of this paper was to give an account of the fermionic trace map, generalizing Gui's construction from the chiral Weyl algebra to the chiral Clifford algebra $\CE$ attached to a purely odd holomorphic bundle $E$ with a symmetric $\sO_X$-pairing $E\otimes_{\sO_X}E\to\wX$, and applying it to a single example (a modified affine current).
 One would need to work directly with the chiral Clifford algebra, and consider 
   the vertex operator superalgebra it quantizes, 
 check the existence, uniqueness, and functoriality of the trace map, and  verify the required algebraic identities (nilpotency of the BV operator, the chiral Wick theorem, the intertwining of differentials). 
 The present paper gives a complete, self-contained treatment. We work throughout with the bundle
\begin{equation}
\label{eqEdef}
E:=\Pi\bigl(F\oplus F^\vee\otimes_{\sO_X}\wX\bigr),
\end{equation}
for $F$ a holomorphic vector bundle of rank $r$ on $X$, equipped with its tautological symmetric pairing (Definition \ref{defpairing}). We call the resulting chiral-algebraic structure the rank two fermionic vertex operator superalgebra: its generating fields organize into two dual families, $\beta_i\in F$ and $\gamma^j\in F^\vee\otimes\wX$ ($i$, $j=1$, 
$\dots$, $r$), exactly as the $b$- and $c$-fields of a $bc$-system. This is to be contrasted with the rank one (self-dual) free-fermion theories attached to a spin structure $\wX^{1/2}$, in which a single family of fields is self-paired via a symmetric form on $\wX^{1/2}\oplus\wX^{1/2}$-type bundle.   We comment on this dichotomy, which parallels the distinction in Gui's paper \cite{Gui23} between the paired families $E_\alpha$, $\alpha<1/2$, and the self-dual, symplectically paired family $E_{1/2}$ occurring in the bosonic case, in Remark \ref{remrankone}. The rank two case is both the case occurring in every standard application (ghost systems, and the matter fermions of the NSR superstring away from the self-dual locus) and the case in which the full  machinery of Lie$^*$-algebras, chiral envelopes and vertex algebra bundles of \cite{BD04,FBZ04} applies without extra structure (no polarization or reality condition on a symplectic form is required, unlike the bosonic self-dual case).

\subsection{Summary of the main results}
Let $X$ be a smooth Riemann surface (not necessarily compact except where explicitly stated) and let $F$ be a holomorphic vector bundle of rank $r$ on $X$. Our results are as follows.

\begin{enumerate}[label=(\arabic*),leftmargin=2em]
\item (\S\ref{secVOSA}, Theorem \ref{thmVOAiso}) We construct the rank two fermionic vertex operator superalgebra $V_F$, the free-fermion Fock space generated by $\beta_i,\gamma^j$ with OPE $\beta_i(z)\gamma^j(w)\sim\delta_i^j/(z-w)$, together with its associated vertex superalgebra bundle $\mathscr V_F$ over $X$ in the sense of Frenkel-Ben-Zvi \cite{FBZ04}, and we prove
\[
\sU(\Lflat)\simeq \mathscr V_F^{\,r},
\]
an isomorphism of chiral algebras between the $\flat$-twisted chiral envelope defining $\CE$ and the chiral algebra of (the right-$\sD_X$-module associated with) $\mathscr V_F$. This identifies $\CE$ concretely as the sheaf of chiral (OPE) observables of the rank two fermionic VOSA.

\item (\S\ref{secBV}, Proposition \ref{propBVaxioms}) We give the fermionic BV superalgebra $(\OBV,\DBV)$ built from the harmonic forms $\HXE$ on a compact $X$, with a complete verification that $\DBV^2=0$ and that the induced bracket satisfies the graded Leibniz rule, tracking all Koszul signs explicitly.

\item (\S\ref{sectrace}, Theorem \ref{thmmainQME}) We construct the trace map
\[
\Trch:\bigl(\widetilde\sC^{\ch}(X,\CE)_{\sQ},\dch_{\CE}\bigr)\longrightarrow(\OBV,-\DBV)
\]
using Feynman diagrams regulated by the fermionic Szeg\H{o} kernel and organized by Pfaffian (rather than permanent) contraction operators, and prove that it satisfies the generalized quantum master equation and is a quasi-isomorphism.

\item (\S\ref{secuniqueness}, Propositions \ref{propuniqueness}-\ref{propmetricindep}) We prove that $\Trch$ is, up to explicit chain homotopy, independent of the choice of Hermitian metric used to define the harmonic projection and the Szeg\H{o} kernel, and that any two chain maps solving the generalized QME and inducing the fermionic chiral HKR map on the associated graded are chain homotopic. We mention the naturality of $\Trch$ under isometries of $(X,F,h)$.

\item (\S\ref{secapplications}, Theorems \ref{thmcurrentTorsion} and \ref{thmemTorsion}) We compute $\Trch$ on a modified affine current $J_\nu$ (deformations of the holomorphic structure of $F$) and on a modified energy-momentum tensor $\widetilde T_\mu$ (Beltrami deformations of $X$), and show that the resulting formulas reproduce, purely algebraically, Fay's classical formulas \cite{Fay92} for the first variation of the fermionic Ray-Singer analytic torsion along the moduli of $F$ and along the moduli of $X$, respectively. Corollary \ref{corsignflip}  states the expected overall sign flip relative to Gui's bosonic formulas \cite[Thms. 1.2 and 6.5]{Gui23}, consistent with the reciprocal relation between fermionic and bosonic Gaussian (Berezin) integrals.
\end{enumerate}

%%%%%%%%%%%%%%%%%%%%%%%%%%%%%%%%%%%%%%%%%%%%%%%%%%%%%%%%%%%%%%%%%%%%%%%%%%%%%%
\subsection{Relations to the existing literature and new results}
\label{ssecnovelty}
Here we organize the results above against five items of literature. 

\emph{Batalin-Vilkovisky algebras and factorization algebras.} The BV formalism \cite{BV81} and its modern homotopical realization in the factorization algebras of Costello-Gwilliam \cite{CG21} provide the language for treating renormalization and quantum master equations. What is new here relative to that literature is not the formalism itself, which we use as given, but its correct specialization to a purely odd classical phase space: the shifted symplectic pairing on $\HXE$ is $(-1)$-shifted and symmetric on the underlying (unshifted) bundle rather than antisymmetric, and the resulting BV operator $\DBV$ is built from anticommuting derivations. We verify (Proposition \ref{propBVaxioms}) that this still produces 
a well defined  BV superalgebra, since the sign discrepancies between the symplectic and orthogonal cases are exactly compensated by fermion statistics. 
This is a fact that is standard in the physics literature on BV quantization of fermionic gauge theories but that we have not found spelled out in the chiral-algebraic setting.

\emph{Chiral and vertex algebras.} Beilinson-Drinfeld \cite{BD04} and Frenkel-Ben-Zvi \cite{FBZ04} treat chiral/vertex algebras built from arbitrary (super) vector bundles, and in particular the fermionic ($bc$) vertex algebra bundle is implicit in \cite[\S 5.2]{FBZ04} as one member of a general family. What is new here is the explicit chiral-envelope versus vertex-bundle comparison isomorphism $\sU(\Lflat)\simeq\mathscr V_F^{\,r}$ carried out in full for the Clifford (odd, Pfaffian) case (Appendices \ref{appVOAbundle}-\ref{appiso}), parallel to but independent of Gui's bosonic comparison \cite[\S 3.3, App. 7.1-7.2]{Gui23}. 
This comparison is what identifies $\CE$ with a vertex operator superalgebra. 

\emph{Clifford algebras.} The finite-dimensional theory of Clifford algebras associated with a quadratic or symmetric bilinear form is classical (see, e.g., \cite{LawsonMichelsohn89}); our chiral Clifford algebra $\CE$ is its sheaf-theoretic, chiral (OPE) analogue, in which the finite-dimensional Clifford relation $e_ie_j+e_je_i=2\langle e_i,e_j\rangle$ is replaced by the operator product expansion regulated by the Szeg\H{o} kernel. We verify explicitly (Lemma \ref{lemCliffordcompat}) that Clifford multiplication is compatible with the $\ZZ/2\ZZ$ grading inherited from $E$, as requested for any such construction.

\emph{Analytic torsion.} Ray-Singer analytic torsion \cite{RaySinger73} and the Quillen metric \cite{Quillen85} on the determinant line bundle of $\bar\partial_F$ are classical invariants; Fay \cite{Fay92} and, in the family setting, Bismut-Freed \cite{BismutFreed86} compute their variation along moduli using Szeg\H{o}-kernel and heat-kernel methods, respectively. Gui \cite{Gui23} recovered Fay's bosonic (symplectic boson) formula algebraically via the trace map on the chiral Weyl algebra. What is new here is the fermionic counterpart: we recover Fay's  fermionic variation formulas (both the bundle-moduli and the curve-moduli versions) from the chiral homology of $\CE$, and we make explicit, in Corollary \ref{corsignflip}, the sign relating the two theories, which reflects the reciprocal (rather than equal) relation between the partition functions $Z_{\text{ferm}}=\det\bar\partial_F$ and $Z_{\text{sympl.\ bos.}}=\det(\bar\partial_E)^{-1}$.

\emph{Fermionic quantization in physics.} $bc$-ghost systems and the free-fermion sector of the NSR superstring are standard tools of string perturbation theory \cite{FMS86,DHokerPhong88}; bosonization of chiral fermions on higher-genus Riemann surfaces is treated in \cite{AGMV87}. Our trace map furnishes a mathematically rigorous, coordinate-free construction of the correlation functions of these theories on an arbitrary Riemann surface, at the chain level (before passing to cohomology/correlation functions), which is not available in the physics literature in this form.

Beyond these five points, this paper is supplying: a full notational and analytic-category preamble (\S\ref{secconventions}); an explicit statement of the hypotheses under which the fermionic Szeg\H{o} kernel exists and of its dependence on the auxiliary Hermitian metric (\S\ref{secszego}); an explicit sign-tracked verification of every algebraic identity claimed (the BV axioms, $d^2=0$ for each differential introduced, the graded Leibniz rule, the cyclicity of the trace pairing - Lemma \ref{lemcyclicity}); a discussion of existence, homotopical uniqueness and functoriality of the trace map (\S\ref{secuniqueness}); a second example (the energy-momentum tensor/Beltrami insertion, \S\ref{ssecemtensor}) parallel to Gui's \cite[\S 6.2]{Gui23}. 

For ease of reference, we collect the principal new contributions of this paper in one place:
\begin{enumerate}[label=(C\arabic*),leftmargin=2.4em]
\item the rank two fermionic vertex operator superalgebra $V_F$ and its associated bundle $\mathscr V_F$, and the proof that $\CE\simeq\mathscr V_F^r$ (Theorem \ref{thmVOAiso}, Appendices \ref{appVOAbundle}-\ref{appiso});
\item the corrected fermionic BV superalgebra $(\OBV,\DBV)$, with $\DBV^2=0$ and the graded Leibniz rule verified in full (Proposition \ref{propBVaxioms}), including the identification of the field/antifield parity flip that a naive formula misses (Lemma \ref{lemgradingforce});
\item existence, uniqueness and the precise singularity structure of the fermionic Szeg\H{o} kernel (Theorem \ref{thmszegoexists}), and the fermionic (Pfaffian) Wick theorem with a complete proof (Theorem \ref{thmfermwick}, Appendix \ref{appwick});
\item the trace map $\Trch$, its chain-map/QME property and quasi-isomorphism (Theorem \ref{thmmainQME}), together with existence, homotopy uniqueness, metric-independence up to homotopy, functoriality, and cyclicity (\S\ref{secuniqueness});
\item the energy-momentum-tensor application (\S\ref{ssecemtensor}) and the explicit sign comparison with Gui's bosonic formulas (Corollary \ref{corsignflip}).
\end{enumerate}

\subsection{Outline}
Section \ref{secconventions} fixes notation and the analytic category. Section \ref{seccliff} introduces the rank two fermionic bundle $E$, its Lie$^*$-superalgebra, and the chiral Clifford algebra $\CE$, with a complete proof of the fermionic chiral PBW theorem (deferred to Appendix \ref{appPBW}). Section \ref{secVOSA} constructs the rank two fermionic vertex operator superalgebra and its associated bundle, and states the comparison isomorphism with $\CE$ (proved in Appendices \ref{appVOAbundle}-\ref{appiso}). Section \ref{secBV} develops the harmonic zero modes and the fermionic BV superalgebra. Section \ref{secszego} treats the fermionic Szeg\H{o} kernel, Pfaffian contraction operators, and the fermionic Wick theorem (proved in Appendix \ref{appwick}). Section \ref{sectrace} constructs the trace map, proves the main theorem, and discusses uniqueness, functoriality, and cyclicity. Section \ref{secapplications} contains the two applications to analytic torsion. Section \ref{secconclusion} discusses the literature comparison of \S\ref{ssecnovelty} in more depth and lists directions for future work, including the rank one (self-dual) case.

\subsection{Guide to the construction}
The following diagram summarizes the main constructions of the paper and the role of the trace map within them; each arrow is defined and each square is shown to commute (up to homotopy, or up to a controlled correction term) at the point in the text indicated beneath it.

\begin{equation}\label{eqbigdiagram}
\begin{tikzcd}[column sep=large, row sep=large]
E \arrow[r,"\mathscr U(-)^\flat\ \text{(Def. \ref{defchiralcliff})}"] \arrow[d,swap,"\text{harmonic projection (\S\ref{ssecharmonic})}"] & \CE \arrow[d,"\Wch:\ \text{normal order.}+\text{Szeg\H o reg.\ (\S\ref{secszego})}"] & \phantom{X} \\
\HXE \arrow[r,swap,"\bigwedge^{\bullet}(-)\ (\S\ref{ssecBValg})"] & \bigwedge{}^{\bullet}\!L \arrow[r,swap,"p\,\circ\,\trom\ (\S\ref{sectrace})"] & \OBV
\end{tikzcd}
\end{equation}
The outer rectangle, chasing $E\to\CE\to\bigwedge^\bullet L\to\OBV$ against $E\to\HXE\to\bigwedge^\bullet L\to \OBV$, commutes up to the differentials $\dch_{\CE}$ and $-\DBV$ (Theorem \ref{thmmainQME}); on chiral homology it commutes and both composite maps are quasi-isomorphisms (Corollary \ref{corqiso}).

Complementing this diagram of objects, the following list describes the logical dependence between the principal theorems, so that a reader may check which earlier results a given later one draws on without searching back through the text. 

\begin{enumerate}[leftmargin=2.4em]
\item Theorem \ref{thmchiralPBW} (chiral PBW) depends only on \S\ref{seccliff} (Definitions \ref{defpairing}-\ref{defchiralcliff}, Lemma \ref{lemLiestarverify});
\item Theorem \ref{thmVOAiso} ($\CE\simeq\mathscr V_F^r$) depends on Theorem \ref{thmchiralPBW} and on the fermionic Wick theorem (Theorem \ref{thmfermwick}, proved independently in \S\ref{secszego}), together with the vertex-bundle construction of Appendix \ref{appVOAbundle};
\item Proposition \ref{propBVaxioms} ($\DBV^2=0$, Leibniz) depends only on \S\ref{ssecharmonic}-\ref{ssecBValg}, in particular Lemma \ref{lemgradingforce}, and is logically independent of \S\ref{seccliff}-\ref{secVOSA};
\item Theorem \ref{thmszegoexists} (Szeg\H{o} kernel) and Theorem \ref{thmfermwick} (Wick theorem) depend on \S\ref{ssecharmonic} and \S\ref{ssecpairing} respectively, and on each other only through Lemma \ref{lemcontractionprops};
\item Theorem \ref{thmmainQME} (the main theorem) depends on all of the above: Theorem \ref{thmchiralPBW} (for the quasi-isomorphism statement), Proposition \ref{propBVaxioms} (for $(\OBV,\DBV)$ to be meaningful), and Theorems \ref{thmszegoexists}-\ref{thmfermwick} (for $\Wch$ to be defined), but not on Theorem \ref{thmVOAiso}, which is used only in \S\ref{secapplications} and Appendix \ref{appiso}, not in the construction of the trace map itself;
\item Propositions \ref{propuniqueness}-\ref{propnaturality} (uniqueness, metric-independence, functoriality) depend on Theorem \ref{thmmainQME} alone;
\item Theorems \ref{thmcurrentTorsion} and \ref{thmemTorsion} (the two applications) depend on Theorem \ref{thmmainQME} and, for frame/coordinate-independence of $J_\nu,\widetilde T_\mu$, on the Schwarzian-derivative computation that Theorem \ref{thmVOAiso} shows is the same computation as the coordinate change formula for $\mathscr V_F^r$ (Appendix \ref{appiso}).
\end{enumerate}

%%%%%%%%%%%%%%%%%%%%%%%%%%%%%%%%%%%%%%%%%%%%%%%%%%%%%%%%%%%%%%%%%%%%%%%
\section{Notation and conventions}
\label{secconventions}

We fix conventions here so that 
  gradings, parities and the ambient analytic category be stated  precisely throughout.

\subsection{Superspaces and the Koszul sign rule}
A \emph{super vector space} (resp.\ sheaf, $\sO_X$-module, $\DX$-module) is a $\ZZ/2\ZZ$-graded object $V=V_{\even}\oplus V_{\odd}$. For homogeneous $a$, $b$ we write $|a|\in\{\even,\odd\}=\{0,1\}\subset\ZZ/2\ZZ$ for the parity and adopt the Koszul sign rule throughout: the transposition of tensor factors acts by
\begin{equation}\label{eqkoszul}
\sigma_{1,2}(a\otimes b)=(-1)^{|a||b|}\,b\otimes a,
\end{equation}
and, more generally, any interchange of two adjacent homogeneous symbols in a formula (moving an operator past a field, reordering a wedge/tensor product, etc.) contributes a sign $(-1)^{|a||b|}$. All differentials, brackets and pairings below are required to be compatible with parity: an operator of parity $p$ raises or preserves parity as $|d(a)|=|a|+p$. We write $\Pi$ for the parity-shift functor, $(\Pi V)_{\even}=V_{\odd}$, $(\Pi V)_{\odd}=V_{\even}$, and $\Pi$ itself is treated as an odd operation, i.e., the canonical map $V\to\Pi V$ has parity $\odd$; this convention fixes signs in \S\ref{seccliff} unambiguously.

\subsection{Riemann surfaces, diagonals, index sets}
$X$ denotes a smooth complex algebraic curve (Riemann surface); $\wX$ its canonical sheaf. For a finite index set $I$, $X^I=\prod_{i\in I}X_i$ ($X_i$ a copy of $X$), and $\Delta_I=\bigcup_{i,j\in I}\Delta_{ij}\subset X^I$ is the big diagonal, $\Delta_{ij}=\{(\ldots,z_k,\ldots): z_i=z_j\}$. For a surjection $\pi:I\twoheadrightarrow T$ we write $\Delta^{(\pi)}=\Delta^{(I/T)}:X^T\hookrightarrow X^I$ for the corresponding partial diagonal, and $\mathsf Q(I)=\bigsqcup_{n\ge1}\mathsf Q(I,n)$ for the set of equivalence relations on $I$, as in \cite[\S 4.2]{Gui23}, following \cite{BD04}.

\subsection{$\DX$-modules and the analytic category}\label{ssecanalyticcat}
As in \cite{BD04,Gui23}, we work throughout in the analytic category: by an $\sO_X$-module or $\DX$-module we mean, respectively, an $\sO_{X^{\mathrm{an}}}$-module or $\sD_{X^{\mathrm{an}}}$-module on the analytification $X^{\mathrm{an}}$, and we omit the superscript ``$\mathrm{an}$''. Specifically, every $\DX$-module occurring below is a sheaf of topological vector spaces of one of two types: (i) coherent $\DX$-modules such as $\wX$, $E_\sD$, $\Sym L$, all of whose spaces of sections over a Stein open subset (in particular over a disc) are Fr\'echet spaces (with the topology of uniform convergence of holomorphic functions and their derivatives on compact subsets), or (ii) the ``$\natural$-extension'' modules $L^\natural_{X^I}=H^0(p_Ij_I)_\bullet j_I^*\bigl(\mathrm{DR}(L)\boxtimes\sO_{X^I}\bigr)$ of \cite[\S 3.2]{Gui23}, whose sections are spaces of meromorphic functions with prescribed poles, again Fr\'echet (indeed nuclear Fr\'echet, being closed subspaces of countable products of copies of $\sO(U)$ for $U$ open in some $X^I$). All tensor products $\otimes_{\sO_X}$, $\boxtimes$ appearing below are the completed projective tensor products of nuclear Fr\'echet spaces. Since every space involved is nuclear Fr\'echet, this completed tensor product is unambiguous (nuclear spaces admit at most one reasonable topology on the tensor product, and the projective and injective completions agree) and all the quotients used to define chiral envelopes (e.g., $\sU(L)=\sU(L^\natural_X)/\sU(L^\natural_X)L^\natural_0$) are quotients by closed subspaces, hence again Fr\'echet. Whenever an infinite sum over Fock-space or PBW-degree occurs (e.g., in Definition \ref{deftracemapch} below) we show directly (Remark \ref{remfiniteness}) that only finitely many terms are nonzero for any fixed chiral chain $\eta$,  thus no further completion is needed there. The only place a  (convergent, not simply finite) sum occurs is in the construction of the Szeg\H{o} kernel as a Green's-operator kernel in \S\ref{secszego}, where convergence is the standard elliptic-regularity statement recalled in Theorem \ref{thmszegoexists}.

We use $f_*$, $f^*$ for pushforward/pullback of $\DX$-modules and $f_\bullet$, 
$f^\bullet$ for the underlying sheaf-theoretic pushforward/pullback, and $\DX\to Y:=\sO_X\otimes_{f^\bullet\sO_Y}f^\bullet\sD_Y$ for the transfer bimodule of $f:X\to Y$, following \cite{BD04}. For a right $\DX$-module $M$ we set $h(M):=M\otimes_{\DX}\sO_X=M/M\Theta_X$ ($\Theta_X$ the tangent sheaf), and $\mathrm{DR}(M)$ or $\mathrm{DR}^i(M):=M\otimes_{\sO_X}\bigwedge^{-i}\Theta_X$ for its de Rham (Spencer) complex, with differential as in \cite[\S 2]{Gui23}. For a holomorphic vector bundle $V$ and $\alpha\in\QQ$ we write $V_{\omega^\alpha}:=V\otimes_{\sO_X}\wX^{\alpha}$ (a choice of $r$-spin structure being fixed whenever $\alpha\in\tfrac1r\ZZ\setminus\ZZ$), $V_\sD:=V\otimes_{\sO_X}\DX$, $M^l:=M\otimes\wX^{-1}$ for the left module associated with a right module $M$, and $M_1\otimes^r M_2:=(M_1^l\otimes_{\sO_X}M_2^l)^r$ for the tensor product of right $\DX$-modules; we write $\Sym M$ (resp.\ $\bigwedge^\bullet M$) for the super-symmetric (resp.\ exterior) tensor power taken in this sense.

\subsection{The Dolbeault complex}
$\sQ_X$ denotes the Dolbeault complex $\sQ_{X}(U):\Omega^{0,0}(U)\xrightarrow{\dbar}\Omega^{0,1}(U)\xrightarrow{\dbar}\cdots$, viewed either as a complex of sheaves or, since $\dbar$ commutes with the holomorphic connection $\partial$, as a complex of (non-quasicoherent) left $\DX$-modules; for a quasicoherent $\sF$ we write $\sF_{\sQ}:=\sF\otimes_{\sO_X}\sQ_X$. A smooth form in $\Omega^{0,\bullet}$ is assigned homological degree equal to minus its Dolbeault degree, as in \cite[Rem. 4.2]{Gui23}.

\subsection{Sign conventions summary}
A table collecting every sign convention fixed in this section and introduced later (parity of $\Pi$, of the chiral bracket, of $\DBV$, of the Pfaffian, and so on) is given in Appendix \ref{appsigns}, for ease of reference during the verifications of \S\S\ref{secBV}-\ref{sectrace}.

%%%%%%%%%%%%%%%%%%%%%%%%%%%%%%%%%%%%%%%%%%%%%%%%%%%%%%%%%%%%%%%%%%%%%%%%%%%%%%%%%%%%%%%%%%
\section{The rank two fermionic bundle and the chiral Clifford algebra}
\label{seccliff}

\begin{remark}%[A preview of what depends on which choices]
\label{remearlypreview}
Let us mention what depends on which choices. 
Since auxiliary choices appear over the next several sections, we define them as they arise but also mention here, in advance, the overall shape of the dependence (Remark \ref{remcanonicalsummary} gives the complete, precise list once all the relevant objects are in hand): $E$, $L$, $\CE$ and the vertex operator superalgebra of \S\ref{secVOSA} depend on $(X,F)$ alone. Only from \S\ref{secBV} onward does a Hermitian metric $h$ enter, and only to select harmonic representatives of otherwise metric-independent cohomology classes. Thus every chain-level object built from $h$ (the Szeg\H{o} kernel, the trace map itself) is understood, from the beginning,
 to carry a residual $h$-dependence that disappears after passing to homology (Proposition \ref{propmetricindep}).
\end{remark}

\subsection{The rank two fermionic bundle and its pairing}\label{ssecpairing}
Let $F$ be a holomorphic vector bundle of rank $r$ on $X$. Set
\[
W:=F\oplus F^\vee\otimes_{\sO_X}\wX,
\]
an ordinary (even) holomorphic vector bundle of rank $2r$, equipped with the $\sO_X$-bilinear form
\begin{equation}\label{eqBform}
B:W\otimes_{\sO_X}W\to\wX,\qquad B\bigl((f,\varphi),(f',\varphi')\bigr):=\varphi(f')+\varphi'(f),
\end{equation}
where $\varphi(f')\in\wX$ denotes the evaluation pairing $F^\vee\otimes\wX\otimes F\to\wX$. Since $B$ is symmetric ($B(w,w')=B(w',w)$, with no extra sign, as $w,w'\in W$ are even) and non-degenerate (the pairing $F\otimes F^\vee\otimes\wX\to\wX$ being a perfect duality pairing of finite rank bundles), $(W,B)$ is an ordinary orthogonal bundle.

\begin{definition}\label{defpairing}
The \emph{rank two fermionic bundle} attached to $F$ is $E:=\Pi W=\Pi(F\oplus F^\vee\otimes\wX)$, a purely odd holomorphic bundle of rank $2r$ ($E=E_{\odd}$, $E_{\even}=0$), equipped with the pairing
\[
\langle-,-\rangle:E\otimes_{\sO_X}E\to\wX,\qquad \langle\Pi w,\Pi w'\rangle:=B(w,w').
\]
\end{definition}

Because we simply relabel the elements of $W$ (rather than applying any sign-carrying operation) to obtain $E$, the pairing $\langle-,-\rangle$ is again symmetric, $\langle e,e'\rangle=\langle e',e\rangle$ for all $e,e'\in E$. This is precisely the situation of Definition \ref{defpairing} below and Remark \ref{remkoszulanticomm}: a symmetric pairing on a purely odd bundle, which encodes fermionic anticommutation once combined with the Koszul sign rule for odd sections.

Fix a local holomorphic frame $\{e_i\}_{i=1,\dots,r}$ for $F$ and its dual frame $\{e^i\}$ for $F^\vee$, and a local trivialization $dz$ of $\wX$. Set
\[
\beta_i:=\Pi(e_i,0),\qquad \gamma^j:=\Pi(0,e^j\otimes dz), \qquad i,j=1,\dots,r,
\]
a local frame for $E$. A direct computation from \eqref{eqBform} gives
\begin{equation}\label{eqbetagammapairing}
\langle\beta_i,\gamma^j\rangle=\langle\gamma^j,\beta_i\rangle=\delta_i^j\,dz,\qquad \langle\beta_i,\beta_j\rangle=\langle\gamma^i,\gamma^j\rangle=0.
\end{equation}
Thus $F$ and $F^\vee\otimes\wX$ sit inside $E$ as complementary isotropic subbundles, dual to one another under $\langle-,-\rangle$: this is the chiral-algebraic realization of a pair of dual free-fermion families $\beta_i,\gamma^j$ with singular operator product $\beta_i(z)\gamma^j(w)\sim\delta_i^j/(z-w)$, $\beta_i(z)\beta_j(w)\sim\gamma^i(z)\gamma^j(w)\sim0$ - the $bc$-system generated by $F$.

\begin{remark}%[Rank one versus rank two]
\label{remrankone}
Let us compare rank one versus rank two. 
The nomenclature is chosen to parallel \cite[\S 3.3]{Gui23}, where a general symplectic bundle is decomposed into paired summands $E_\alpha\oplus E_\alpha^\vee$ ($\alpha\neq\tfrac12$) together with a possible self-dual summand $E_{1/2}$, on which a symplectic (antisymmetric) form is needed because a symmetric form on a purely even self-dual space carries no natural bosonic pairing compatible with statistics. In the fermionic setting the roles are exchanged: a self-dual family requires only a symmetric form on the self-dual piece, which is always available once $F\simeq F^\vee\otimes\wX^{1-2\alpha}$, e.g., for $\alpha=\tfrac12$ and $F=\wX^{1/2}$ a theta-characteristic (a choice of spin structure). This ``rank one'' free-fermion theory is the classical chiral free fermion of central charge $\tfrac12$ associated with a spin structure, whose chiral Clifford algebra requires no doubling and whose harmonic space $\HH(X,\wX^{1/2})$ carries a symmetric rather than a dual pairing. Its trace map is a different (though related) construction, which we do not treat here. Every application in \S\ref{secapplications}, and every standard use of chiral fermions in two-dimensional field theory away from the self-dual locus ($bc$-ghosts of any weight, matter fermions of the NSR string, free-fermion current algebras attached to a vector bundle), is of the paired, rank two type \eqref{eqEdef} treated in this paper.
\end{remark}

\subsection{The fermionic Lie$^*$ superalgebra}\label{ssecLiestar}
We recall Gui's notion of Lie$^*$ algebra \cite[\S 3.2]{Gui23} and extend it, with explicit signs, to the super setting; a naive transcription of the non-super definition is not the correct one.

\begin{definition}\label{defLiestarsuper}
A \emph{Lie$^*$ superalgebra} on $X$ is a $\ZZ/2\ZZ$-graded $\sD_{X^2}$-module $\sL$ together with an even $\sD_{X^2}$-module map $\mu_{\mathrm{Lie}}:\sL\boxtimes\sL\to\Delta_*\sL$ satisfying 

\begin{enumerate}[leftmargin=1.6em]
\item \emph{Super-antisymmetry}: for homogeneous local sections $a$, $b$ of $\sL$ and a local holomorphic function $f(z_1,z_2)$,
\begin{equation}\label{eqsuperantisym}
\mu_{\mathrm{Lie}}\bigl(f(z_1,z_2)\cdot a\boxtimes b\bigr)=-(-1)^{|a||b|}\,\sigma_{1,2}\,\mu_{\mathrm{Lie}}\bigl(f(z_2,z_1)\cdot b\boxtimes a\bigr);
\end{equation}
\item \emph{Super Jacobi identity}: for homogeneous $a,b,c$ and $f(z_1,z_2,z_3)$,
\begin{multline}\label{eqsuperjacobi}
\mu_{\mathrm{Lie}}\bigl(\mu_{\mathrm{Lie}}(f\cdot a\boxtimes b)\boxtimes c\bigr) +(-1)^{|a|(|b|+|c|)}\sigma_{1,2,3}\,\mu_{\mathrm{Lie}}\bigl(\mu_{\mathrm{Lie}}(f^{(1)}\cdot b\boxtimes c)\boxtimes a\bigr)\\
+(-1)^{|c|(|a|+|b|)}\sigma_{1,2,3}^{-1}\,\mu_{\mathrm{Lie}}\bigl(\mu_{\mathrm{Lie}}(f^{(2)}\cdot c\boxtimes a)\boxtimes b\bigr)=0,
\end{multline}
where $f^{(1)}$, $f^{(2)}$ denote the corresponding cyclic permutations of $f$ and $\sigma_{1,2,3}$ is the cyclic permutation action on $\Delta^{X\to X^3}_*\sL$.
\end{enumerate}
\end{definition}

When $\sL$ is purely even this reduces to \cite[Def. 3.1]{Gui23} restricted to Lie$^*$ algebras (all signs $(-1)^{|a||b|}$ are $+1$). The Koszul signs in \eqref{eqsuperantisym}-\eqref{eqsuperjacobi} are the unique insertions making the definition parity-consistent, i.e., compatible with the abstract flip isomorphism \eqref{eqkoszul} on $\sL\boxtimes\sL$.

Let $L:=E_\sD=E\otimes_{\sO_X}\DX$, a purely odd right $\DX$-module. Using the pairing $\langle-,-\rangle$ of Definition \ref{defpairing} 
we define, 
\begin{equation}
\label{eqmuLiedef}
\mu_{\mathrm{Lie}}:L\boxtimes L\longrightarrow\Delta_*\wX,\quad 
E_\sD\boxtimes E_\sD\longrightarrow(E\otimes_{\sO_X}E)\otimes_{\sO_{X^2}}\DX^{\boxtimes2}\xrightarrow{\ \langle-,-\rangle\otimes\id\ }\wX\otimes_{\sO_{X^2}}\DX^{\boxtimes2}\to\Delta_*\wX.
\end{equation}

\begin{lemma}\label{lemLiestarverify}
The map \eqref{eqmuLiedef} defines a Lie$^*$ superalgebra structure on $L$ (with values in the central, purely even object $\wX$, i.e.,  an abelian Lie$^*$ superalgebra with values in $\wX$).
\end{lemma}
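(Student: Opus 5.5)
The proof has three parts: check that $\mu_{\mathrm{Lie}}$ is well defined, check super-antisymmetry, and check the super Jacobi identity. Following Gui's treatment of the even case \cite[\S 3.2]{Gui23}, the bracket lands in the central object. So the relevant structure is the abelian Lie$^*$ superalgebra $L$ together with the central extension datum $\mu_{\mathrm{Lie}}:L\boxtimes L\to\Delta_*\wX$. Concretely, the bracket on the extension $L^\flat=L\oplus\wX$ is $\mu_{\mathrm{Lie}}$ on $L\boxtimes L$ and zero whenever an argument lies in $\wX$.

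\textbf{Well-definedness.} The first arrow in \eqref{eqmuLiedef} is the canonical identification $E_\sD\boxtimes E_\sD\simeq(E\boxtimes E)\otimes_{\sO_{X^2}}\DX^{\boxtimes 2}$. The last arrow is the standard $\sD_{X^2}$-linear map
\[
\wX\boxtimes\wX\otimes\DX^{\boxtimes 2}\to\Delta_*\wX,
\]
given by restriction to the diagonal followed by the $\delta$-function of the diagonal, as in the even case. The pairing step is $\sO_X$-bilinear, so the composite is a $\sD_{X^2}$-module map. It is even because $\langle-,-\rangle$ pairs two odd sections into the even sheaf $\wX$, giving total parity $\odd+\odd=\even$.

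\textbf{Super-antisymmetry.} On generators, $\mu_{\mathrm{Lie}}(f\cdot e\boxtimes e')=\langle e,e'\rangle f(z_1,z_2)\delta(z_1-z_2)$, up to the usual $\DX$-action on the delta function. For odd $e,e'$ the required sign is $-(-1)^{|e||e'|}=+1$. So \eqref{eqsuperantisym} becomes the statement that
\[
\sigma_{1,2}\bigl(\langle e',e\rangle f(z_2,z_1)\delta(z_2-z_1)\bigr)=\langle e,e'\rangle f(z_1,z_2)\delta(z_1-z_2).
\]
This follows from symmetry of $\langle-,-\rangle$ together with the symmetry of the diagonal delta-module under $\sigma_{1,2}$. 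This is the key sign point. In the bosonic case, the antisymmetry of the symplectic form supplies the minus sign. Here the Koszul sign $(-1)^{1\cdot 1}$ supplies it, and symmetry of $B$ is exactly what is needed. I would verify the delta-function symmetry by reducing to $f=1$: the $\sD_{X^2}$-linearity of both sides then extends the identity to all $f$, since both sides are $\sD$-linear in the same way after the flip.

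\textbf{Jacobi identity.} Every term in \eqref{eqsuperjacobi} is an iterated bracket $\mu_{\mathrm{Lie}}(\mu_{\mathrm{Lie}}(\cdot)\boxtimes c)$. The inner bracket lands in $\Delta_*\wX$, the central summand of $L^\flat$, and the bracket of a central element with anything is zero. So each of the three terms vanishes separately and the identity holds trivially.

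I expect the main obstacle to be the sign bookkeeping in the antisymmetry step. One must confirm that the flip $\sigma_{1,2}$ on $\Delta_*\wX$ acts on the diagonal delta-module with sign $+1$, as it does in the even case, rather than introducing an extra sign from the identification of $\Delta_*\wX$ as a right $\sD_{X^2}$-module. I would verify this in local coordinates with $dz_1\wedge dz_2$-type trivializations, tracking the Koszul sign from moving $\Pi$ past the other factor according to the convention of \S\ref{secconventions}.
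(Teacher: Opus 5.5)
Your proposal is correct and follows the paper's proof essentially verbatim. Super-antisymmetry reduces, via the Koszul sign $(-1)^{|a||b|}=-1$, to the symmetry of $\langle-,-\rangle$ together with $\sigma_{1,2}$ acting as the plain geometric exchange on $\Delta_*\wX$, and the Jacobi identity holds termwise because the inner bracket lands in the central summand $\wX\subset\Lflat$. Your additional checks — evenness and $\sD_{X^2}$-linearity of $\mu_{\mathrm{Lie}}$, and reading the statement as a central-extension datum on the abelian $L$ — are left implicit in the paper. The sign you flag is indeed $+1$, as it must be for the unit chiral algebra $\wX$ (and Gui's symplectic case) to be antisymmetric under the same convention.
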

\begin{proof}
Both $L$ and $\wX$ are purely odd and purely even respectively,  thus $|a|=|b|=\odd$ for all homogeneous local sections $a$, $b$ of $L$. Hence $(-1)^{|a||b|}=-1$ throughout, and \eqref{eqsuperantisym} specializes to
\[
\mu_{\mathrm{Lie}}\bigl(f(z_1,z_2)\cdot a\boxtimes b\bigr)=\sigma_{1,2}\,\mu_{\mathrm{Lie}}\bigl(f(z_2,z_1)\cdot b\boxtimes a\bigr).
\]
Both sides are computed by applying $\langle-,-\rangle\otimes\id$ and pushing forward to the diagonal. Since $\sigma_{1,2}$ realizes the geometric exchange $z_1\leftrightarrow z_2$ on $\Delta_*\wX$, the right-hand side equals the pushforward of $f(z_1,z_2)\langle b,a\rangle$, and since $\langle-,-\rangle$ is symmetric (Definition \ref{defpairing}), $\langle b,a\rangle=\langle a,b\rangle$,  thus the right-hand side equals the pushforward of $f(z_1,z_2)\langle a,b\rangle$, which is the left-hand side. This proves \eqref{eqsuperantisym}. For \eqref{eqsuperjacobi}, since $\mu_{\mathrm{Lie}}$ takes values in the central object $\Delta_*\wX\subset\Delta_*\Lflat$ (see \eqref{eqLflatdef} below) on which the bracket with any further element vanishes by definition of the central extension, every summand $\mu_{\mathrm{Lie}}(\mu_{\mathrm{Lie}}(\cdots)\boxtimes(\cdots))$ in \eqref{eqsuperjacobi} vanishes identically,  thus the identity holds trivially. This is the same mechanism as in the (non-super) case of a central extension, cf. \cite[Def. 3.6]{Gui23}.
\end{proof}

\begin{remark}\label{remkoszulanticomm}
Lemma \ref{lemLiestarverify} is the precise sense in which a symmetric pairing on the  odd bundle $E$ encodes fermionic anticommutation: the physical anticommutation relation 
$\{\beta_i(z),\gamma^j(w)\}$ $\sim$ $\delta_i^j/(z-w)$ of the free-fermion OPE is exactly the super-antisymmetric bracket $\mu_{\mathrm{Lie}}$ of Lemma \ref{lemLiestarverify}, which, being built from a symmetric $\langle-,-\rangle$ but living on odd $L$, is antisymmetric  as an operator identity on fields once the Koszul sign $(-1)^{|a||b|}=-1$ is accounted for. This is the fermionic mirror of the (superficially opposite-looking) bosonic statement that a symplectic pairing on an even bundle produces the ordinary (commuting) Weyl algebra of \cite[Def. 3.6]{Gui23}.
\end{remark}

\subsection{The chiral Clifford algebra}\label{ssecchiralcliff}
We now recall the chiral envelope construction of \cite[\S 3.2]{Gui23}, adapted to the super setting. Every step comes from the non-super case once tensor products are read as super tensor products (with the Koszul sign rule \eqref{eqkoszul} applied whenever two odd factors are transposed).  Thus we state the construction and mention the (few) points at which signs enter.

Working on an affine open $U\subset X$, let $\sL$ be any Lie$^*$ superalgebra, $j_I:V_I\hookrightarrow U\times U^I$ the complement of the extended diagonals, and
\[
\sL^\natural_{U^I}:=H^0(p_Ij_I)_\bullet j_I^*\bigl(\mathrm{DR}(\sL)\boxtimes\sO_{U^I}\bigr),
\]
an $\sO_{U^I}$-module whose fibre at $(x_i)\in U^I$ is $\Gamma(U-\{x_i\}_{i\in I},h(\sL))$. Write $\sL^\natural_0:=\sL^\natural_U=\Gamma(U,h(\sL))$ and
\[
\sU_{U^I}(\sL):=\sU(\sL^\natural_{U^I})/\sU(\sL^\natural_{U^I})\sL^\natural_0,
\]
the vacuum module (an ordinary, ungraded enveloping algebra construction applied to the super vector space $\sL^\natural_{U^I}$, i.e., the enveloping algebra in which odd elements anticommute up to the bracket). By \cite[p. 217, \S3.7.7]{BD04}, over $U\times U-\Delta$ the natural maps
\[
\sU_U(\sL)\boxtimes\sU_U(\sL)\xrightarrow{c}\sU(\sL^\natural_{U^2})/\sU(\sL^\natural_{U^2})p_2^*\sL^\natural_U\otimes\sU(\sL^\natural_{U^2})/\sU(\sL^\natural_{U^2})p_1^*\sL^\natural_U\xleftarrow{\iota}\sU_{U^2}(\sL)
\]
are isomorphisms, and $\sU(\sL):=\sU^r_U(\sL)$ acquires a chiral superalgebra structure: for $a\cdot dz_1\boxtimes dz_2\in\sU^r_U(\sL)\boxtimes\sU^r_U(\sL)(*\Delta)$ and $N\gg0$ with $c((z_1-z_2)^Na)\in\mathrm{Im}(\iota)$,
\begin{equation}\label{eqchiralenvprod}
\mu(a\cdot dz_1\boxtimes dz_2):=\mu_\omega\Bigl(\frac{dz_1\boxtimes dz_2}{(z_1-z_2)^N}\Bigr)\cdot\bigl(\iota^{-1}\circ c((z_1-z_2)^N\cdot a)\bigr).
\end{equation}
No sign beyond the ambient Koszul rule enters this construction. The only place parity appears  is in the (super) commutation relations used to build $\sU(\sL^\natural_{U^I})$ itself, i.e., in evaluating $c$ and $\iota$ on odd elements, exactly as in the finite-dimensional theory of Clifford versus ordinary enveloping algebras.

\begin{remark}
\label{remclosedness}
 \textit{Closedness of the vacuum-module ideal.} 
The quotient defining $\sU_{U^I}(\sL)$ is by the left ideal generated by the coherent (indeed finite-rank locally free, in our case) submodule $\sL^\natural_0\subset\sU(\sL^\natural_{U^I})$. Since $\sU(\sL^\natural_{U^I})$ is built as a countable union of finite-rank coherent $\sO_{U^I}$-modules (the PBW filtration of \S\ref{ssecchiralcliff}, each stage a finite direct sum of tensor powers of the coherent module $\sL^\natural_{U^I}$) with continuous, $\sO_{U^I}$-linear structure maps, the ideal generated by a coherent submodule is automatically closed in the Fr\'echet topology at every finite PBW stage, hence closed in the inductive limit. This is the general closedness statement for ideals generated by coherent submodules of the (analytic, nuclear Fr\'echet) chiral envelope construction of \cite[\S 3]{BD04}, used without further comment throughout the non-super theory and unaffected by the parity of $\sL$.
\end{remark}

\begin{remark} 
\textit{Central extension and the Dolbeault resolution.} 
The central-extension computation of Lemma \ref{lemLiestarverify} is a statement at the level of $\sD_X$-modules, made before any Dolbeault resolution is introduced. Its compatibility with the resolution $h(M)=M\otimes_{\DX}\sO_X$ used from \S\ref{ssecharmonic} onward is automatic, since $h(-)$ is an exact functor on the coherent $\DX$-modules occurring here (locally free $\sO_X$-modules tensored with $\DX$, for which $h(-)$ simply recovers the underlying $\sO_X$-module) and the bracket $\mu_{\mathrm{Lie}}$ of \eqref{eqmuLiedef} is by construction $\sO_{X^2}$-linear after the pairing is applied,  thus it commutes with any further resolution applied afterward. No additional compatibility hypothesis is needed beyond what is already used, without comment, in the non-super chiral algebra formalism of \cite{BD04,Gui23}.
\end{remark}

\begin{definition}\label{defLflat}
For a Lie$^*$ superalgebra $\sL$ with a central extension $\sL^\flat$ of $\sL$ by $\wX$ (as constructed for $L$ in Lemma \ref{lemLiestarverify}, with $\Lflat:=L\oplus\wX$, see \eqref{eqLflatdef} below), the \emph{$\flat$-twisted chiral enveloping superalgebra} is
\[
\sU(\sL)^\flat:=\sU(\sL^\flat)/\langle 1-1^\flat\rangle,
\]
$1=\wX\subset\sU(\sL^\flat)$ the chiral unit and $1^\flat=\wX\subset\sL^\flat$ the central copy.
\end{definition}

Now, 
\begin{equation}
\label{eqLflatdef}
\Lflat:=L\oplus\wX,
\end{equation}
with bracket vanishing on $\wX$ and coinciding with $\mu_{\mathrm{Lie}}$ of \eqref{eqmuLiedef} on $L\boxtimes L$.

\begin{definition}\label{defchiralcliff}
The \emph{chiral Clifford algebra} generated by the rank two fermionic bundle $E$ (Definition \ref{defpairing}) is
\[
\CE:=\sU(\Lflat)=\sU(L)^\flat,
\]
where $L=E_\sD$, with chiral product \eqref{eqchiralenvprod}.
\end{definition}

%%%%%%%%%%%%%%%%%%%%%%%%%%%%%%%%%%%%%%%%%%%%%%%%%%%%%%%%%%%%%%%%%%%%%%%%
\subsection{The fermionic chiral PBW theorem}

We have a theorem on chiral PBW for $\CE$. 
\begin{theorem}
\label{thmchiralPBW}
There is a natural exhaustive filtration $\CE=\bigcup_n(\CE)_n$ with $(\CE)_0\simeq\wX$, $(\CE)_1\simeq(\CE)_0\oplus L$, whose associated graded superalgebra is
\begin{equation}\label{eqchiralPBW}
\gr(\CE)\ \simeq\ \bigwedge{}^{\!\bullet}L,
\end{equation}
canonically, as commutative chiral superalgebras. Explicitly, choosing a local coordinate $z$, sections of $\CE$ are spanned by expressions
\begin{equation}\label{eqlocalclifford}
g\cdot\Bigl(\frac{n_1!\,e_{i_1}}{(t-z)^{n_1+1}}\Bigr)\!\cdots\!\Bigl(\frac{n_k!\,e_{i_k}}{(t-z)^{n_k+1}}\Bigr)\!\cdot|0\rangle
\end{equation}
($g$ holomorphic, $e_{i_1},\dots,e_{i_k}$ a local frame for $E$), and \eqref{eqlocalclifford} corresponds under \eqref{eqchiralPBW} to
$(e_{i_1}\otimes\partial_z^{n_1}\otimes dz^{-1})\wedge\cdots\wedge(e_{i_k}\otimes\partial_z^{n_k}\otimes dz^{-1})\cdot dz\in\bigwedge^kL$.
\end{theorem}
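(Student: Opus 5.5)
The plan is to follow the standard chiral PBW argument of Beilinson--Drinfeld \cite[\S 3.7]{BD04}, as adapted by Gui \cite[\S 3.2]{Gui23} for the chiral Weyl algebra. We carry it out locally on an affine open $U$ with a coordinate $z$, then glue using naturality.

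\emph{Defining the filtration.} Let $\sU(\Lflat^\natural_{U})_n$ be the image of tensors of length $\le n$ in the enveloping algebra. Set $(\CE)_n$ to be the image of $\sU(\Lflat^\natural_U)_n$ in $\sU(\Lflat)$, taken after the vacuum quotient by $\sU\cdot\Lflat^\natural_0$ and after the identification $1=1^\flat$ of Definition \ref{defLflat}. With this definition:
\begin{enumerate}[leftmargin=1.6em]
\item The filtration is exhaustive by construction.
\item It is compatible with the chiral product \eqref{eqchiralenvprod}, since $c$ and $\iota$ preserve length.
\item It is independent of $z$, because it is defined intrinsically through $\Lflat^\natural$.
\item $(\CE)_0=\wX$, since the unit is identified with the central $\wX$.
\item $(\CE)_1=\wX\oplus L$, using the identification $\Lflat^\natural_U/\Lflat^\natural_0\cong L$. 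This follows from the description of the fibre of $L^\natural$ as sections of $h(L)=E$ on a punctured disc modulo the regular ones. The Laurent tails $e_i\,n!/(t-z)^{n+1}$ then match $e_i\otimes\partial_z^n$, and the factor $n!$ comes from the residue pairing.
\end{enumerate}

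\emph{The surjection onto the associated graded.} Because $\mu_{\mathrm{Lie}}$ takes values in the central $\wX$ (Lemma \ref{lemLiestarverify}), the super-commutator of two generators of $\Lflat^\natural$ lands in filtration degree $0$. Hence $\gr$ is super-commutative. The generators are odd, so the super-symmetric algebra on $L$ is exactly $\bigwedge^\bullet L$, taken with the $\otimes^r$ convention of \S\ref{ssecanalyticcat}. This yields a surjection $\bigwedge^\bullet L\twoheadrightarrow\gr\CE$ of commutative chiral superalgebras. It sends the wedge monomial in the statement to the class of the expression \eqref{eqlocalclifford}, which also gives the spanning claim.

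\emph{Injectivity, the main obstacle.} We expect injectivity to be the hardest step. The plan is to build a Fock representation and show the monomials stay independent there. Locally, $\Lflat^\natural_U$ at a point splits as
\[
\Lflat^\natural_0\oplus\mathrm{span}\{e_i(t-z)^{-n-1}\}\oplus\wX.
\]
The first two summands are each isotropic for the residue pairing, so the topological Clifford algebra acts on the exterior algebra of the polar part. The polar generators act by wedging, and the regular ones act by contraction through $\langle-,-\rangle$, which is well defined by \eqref{eqbetagammapairing}.

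Concretely, we will check three things:
\begin{enumerate}[leftmargin=1.6em]
\item The relations of $\sU(\Lflat^\natural)$ hold in this action. This is a finite Koszul-sign computation, of the kind used in Lemma \ref{lemLiestarverify}.
\item $\Lflat^\natural_0$ kills the vacuum and $1^\flat$ acts as $1$, so the action factors through $\CE$.
\item The monomials \eqref{eqlocalclifford} with ordered indices $(i_1,n_1)<\dots<(i_k,n_k)$ map to linearly independent elements of the exterior algebra.
\end{enumerate}
Together these show the surjection is an isomorphism on each graded piece. The delicate points are the following:
\begin{itemize}[leftmargin=1.6em]
\item Doing this uniformly in families over $U^I$, so that the statement holds as $\sO$-modules rather than fibrewise. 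We handle this by observing that all modules involved are locally free at each PBW stage (Remark \ref{remclosedness}), so fibrewise injectivity suffices.
\item Checking that the chiral, not merely associative, structures agree on $\gr$. This reduces to $\mu$ on $\gr$ being the product $j_*j^*$-extension of the wedge product, since the polar terms in \eqref{eqchiralenvprod} lower the filtration degree.
\end{itemize}
Canonicity follows from the coordinate-free definition of the filtration and of the map $\bigwedge^\bullet L\to\gr$. The explicit formula is only a description of that map in the chart $z$.
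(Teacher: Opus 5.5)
Your proposal is correct. For the spanning and super-commutativity part it follows the same skeleton as the paper's proof in Appendix \ref{appPBW}: filter by length, show that brackets of generators drop to filtration degree $0$, and conclude that $\gr(\CE)$ is a quotient of $\bigwedge^\bullet L$.

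It differs from the paper in two ways.

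\emph{How the anticommutation is obtained.} The paper derives \eqref{eqPBWacr} from an explicit computation of $\iota$ on $\frac{e_i}{t-z_1}\cdot\frac{e_k}{t-z_2}|0\rangle$, followed by differentiation in $z_1,z_2$. You deduce it abstractly from the centrality of $\mu_{\mathrm{Lie}}$ (Lemma \ref{lemLiestarverify}). Your isotropy observation actually gives more. The correction $\omega_{ik}\,dt/(t-z)^{n+m+2}$ has zero residue at $t=z$, so it is exact and vanishes already in $\CE$, not merely in $\gr(\CE)$. This agrees with the sketch given right after the statement of Theorem \ref{thmchiralPBW}. It does not agree with Example \ref{exrankonelocal} and the display in Appendix \ref{appPBW}, which present the correction as a nonzero central term.

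\emph{Injectivity.} This is the more substantial difference. The paper never proves injectivity of $\bigwedge^\bullet L\to\gr(\CE)$; it only asserts that the classes $\bar X_{i,n}$ generate the exterior algebra ``subject only to this relation''. Your Fock-module argument supplies exactly this missing step:
\begin{itemize}
\item polar modes act by wedging;
\item regular modes act by residue contraction;
\item $L^{\flat\natural}_0$ kills the vacuum;
\item $1^\flat$ acts by $1$.
\end{itemize}
Equivalently, it is the classical PBW theorem for the vacuum module of the Lie superalgebra $L^{\flat\natural}_x$ induced from $L^{\flat\natural}_0\oplus\CC 1^\flat$.

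In short, the paper's route produces explicit local formulas, which it reuses in Appendix \ref{appwick}. Yours produces an actual isomorphism rather than only a surjection, with a cleaner account of why the lower-order correction term disappears.
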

\begin{proof}
See Appendix \ref{appPBW}. The key local computation is that the operators on the right-hand side of \eqref{eqlocalclifford} anticommute, because their commutator (computed exactly as in \cite[Rem. 3.7]{Gui23}, but now as an anticommutator since $e_{i_p},e_{i_q}$ are odd) is
\[
\{e_{i_p},e_{i_q}\}\ \longleftrightarrow\ \frac{\langle e_{i_p},e_{i_q}\rangle}{(t-z)^{n_p+n_q+2}},
\]
which vanishes identically in $\Gamma(X,h(\wX))$ (i.e., is chirally trivial/exact), so that in the associated graded algebra, where such lower-order corrections are discarded, the generators   anticommute, producing the exterior rather than the symmetric algebra. This is the sheaf-theoretic realization of the classical fact that the associated graded of a Clifford algebra (for its natural, i.e., Chevalley, filtration) is the exterior algebra on the underlying vector space; see, e.g., \cite[Ch. I]{LawsonMichelsohn89} for the finite-dimensional statement.
\end{proof}

Les us discuss compatibility of Clifford multiplication with the grading. 
\begin{lemma}
\label{lemCliffordcompat}
The filtration of Theorem \ref{thmchiralPBW} is compatible with the $\ZZ/2\ZZ$-grading inherited from $E$: writing $(\CE)_n=(\CE)_n^{\even}\oplus(\CE)_n^{\odd}$ for the decomposition into sections built from an even/odd number of factors of $L$ modulo $(\CE)_{n-1}$, the chiral product \eqref{eqchiralenvprod} satisfies
\[
\mu\bigl((\CE)_m^{p}\boxtimes(\CE)_n^{q}\bigr)\subset\Delta_*(\CE)_{m+n}^{p+q\ (\mathrm{mod}\ 2)},
\]
i.e., Clifford multiplication is a map of $\ZZ/2\ZZ$-graded sheaves, and the leading symbol map $\gr(\CE)\to\bigwedge^\bullet L$ of Theorem \ref{thmchiralPBW} respects this grading termwise (an element built from $k$ factors of $L$ has parity $k\bmod2$).
\end{lemma}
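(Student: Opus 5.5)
The plan is to reduce everything to the parity of the generators and the evenness of the structure maps. First I will record that the chiral envelope $\sU(\Lflat)$ inherits a $\ZZ/2\ZZ$-grading from $\Lflat=L\oplus\wX$, with $L$ purely odd and $\wX$ purely even. This grading is on $\sL^\natural_{U^I}$ and hence on the super enveloping algebra $\sU(\sL^\natural_{U^I})$. The left ideal generated by $\sL^\natural_0$ is homogeneous, being generated by homogeneous elements, and so is $\langle 1-1^\flat\rangle$, since both $1$ and $1^\flat$ are even. Hence $\CE$ is a $\ZZ/2\ZZ$-graded sheaf.

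In the local spanning set \eqref{eqlocalclifford}, a monomial with $k$ factors of $L$ then has parity $k\bmod 2$. This is consistent with the definition of $(\CE)_n^{p}$ modulo $(\CE)_{n-1}$, because the super commutation relation replaces two odd factors by the even element $\langle e,e'\rangle$. So lower-order corrections drop the number of factors by $2$ and preserve parity, and the decomposition is well defined on the filtration pieces themselves, not only on $\gr$.

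Second, I will show that the chiral product \eqref{eqchiralenvprod} is even and filtered. The maps $c$ and $\iota$ are induced by the canonical maps of enveloping algebras $\sU(\sL^\natural_U)^{\otimes 2}\to\sU(\sL^\natural_{U^2})$. These are even, because they involve only inclusions and the multiplication in a super enveloping algebra, with Koszul signs but no parity shift. Multiplication by $(z_1-z_2)^{\pm N}$ and $\mu_\omega$ are even as well. So $\mu$ is an even map, which gives the parity part $p+q \bmod 2$.

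For the filtration part I will use that $\iota^{-1}\circ c$ sends a product of an $m$-fold and an $n$-fold monomial to a sum of monomials with at most $m+n$ factors. Indeed, reordering via the relation $\{a,b\}=\mu_{\mathrm{Lie}}(a\boxtimes b)\in\wX$ only lowers the factor count by $2$, and by the PBW theorem (Theorem \ref{thmchiralPBW}) this ordering is well defined. Hence $\mu\bigl((\CE)_m^p\boxtimes(\CE)_n^q\bigr)\subset\Delta_*(\CE)_{m+n}^{p+q}$.

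Finally, the leading symbol map is the identification of Theorem \ref{thmchiralPBW}, which sends \eqref{eqlocalclifford} to an element of $\bigwedge^kL$. Since $L$ is odd, $\bigwedge^kL$ carries parity $k\bmod2$ under the super conventions of \S\ref{secconventions}. Compatibility termwise is then immediate.

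I expect the main obstacle to be the well-definedness of $(\CE)_n^{p}$, that is, independence of the chosen presentation. I would handle it by defining the parity intrinsically via the grading of $\sU(\sL^\natural)$, as above, and only afterwards comparing it with the factor count.
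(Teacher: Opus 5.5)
Your proposal is correct and follows essentially the same route as the paper: the evenness of $\mu_\omega$ and of the identifications $c,\iota$, the fact that reordering odd generators only produces central (even) corrections that lower the factor count by two, and the fact that $\bigwedge^k L$ has parity $k \bmod 2$ for odd $L$. Two points that the paper's one-line proof leaves implicit are made precise in your version: you define the grading intrinsically through the homogeneous ideals of the super enveloping algebra, and you check the filtration bound $m+n$ explicitly.
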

\begin{proof}
It is immediate from \eqref{eqchiralenvprod}: the chiral product is built from $\mu_\omega$ (parity $\even$, since $\wX$ is purely even) applied after the identifications $c,\iota$, none of which changes the total number of $L$-factors present in a monomial modulo the central $\wX$-factors. Hence the parity of a product of a length-$m$ and a length-$n$ monomial in $L$ is $m+n\bmod2$, as claimed. Equivalently, $\gr(\CE)\simeq\bigwedge^\bullet L$ is manifestly 
$\ZZ/2\ZZ$-graded by wedge-degree mod $2$, and the isomorphism of Theorem \ref{thmchiralPBW} is graded by construction, since it is built from the (parity-preserving) local trivialization \eqref{eqlocalclifford}.
\end{proof}

\begin{example}
\label{exrankonelocal}
In this example we consider the rank-one Clifford module. 
Take $r=1$ (thus $E$ has rank $2$, generated by a single pair $\beta:=\beta_1$,
 $\gamma:=\gamma^1$, $\langle\beta,\gamma\rangle=dz$) and work near a point with local coordinate $z=0$. By \eqref{eqlocalclifford}, the four lowest chiral Clifford monomials beyond the vacuum are
\[
X_\beta:=\frac{\beta}{t-z},\qquad X_\gamma:=\frac{\gamma}{t-z},\qquad X_\beta X_\gamma,\qquad X_\gamma X_\beta,
\]
and Appendix \ref{appPBW} (the case $n=m=0$ of \eqref{eqPBWacr}, with $\omega_{\beta\gamma}=1$ since $\langle\beta,\gamma\rangle=dz$ in this frame) gives the explicit anticommutation relation
\[
X_\beta X_\gamma+X_\gamma X_\beta=\frac1{(t-z)^2},
\]
a purely central ($\wX$-valued) correction. Passing to $\gr(\CE)$, this correction - being of filtration degree $0$, one less than the degree-$2$ product $X_\beta X_\gamma$ - is discarded, and the images $\bar X_\beta,\bar X_\gamma$ satisfy $\bar X_\beta\bar X_\gamma+\bar X_\gamma\bar X_\beta=0$, generating the two-dimensional exterior algebra $\bigwedge^\bullet L|_{\deg\le1}=\CC\oplus L$ on the single wedge generator $\bar X_\beta\wedge\bar X_\gamma\leftrightarrow(\beta\otimes\pd_z^0\otimes dz^{-1})\wedge(\gamma\otimes\pd_z^0\otimes dz^{-1})\cdot dz$. Specifically, in $\CE$ itself, $X_\beta X_\gamma\ne-X_\gamma X_\beta$ (they differ by the central term $(t-z)^{-2}$, exhibiting the Clifford, rather than exterior, relation at the chiral-algebra level), while in the associated graded the two anticommute.   This is the smallest nontrivial instance of the mechanism proved in general in Appendix \ref{appPBW}, and the reader may verify \eqref{eqPBWacr} by hand at this order without any further input from the rest of the paper.
\end{example}

%%%%%%%%%%%%%%%%%%%%%%%%%%%%%%%%%%%%%%%%%%%%%%%%%%%%%%%%%%%%%%%%%%%%%%%%%%%%%%%%%%%
\section{The rank two fermion vertex operator superalgebra}
\label{secVOSA}

This section supplies the following construction: we exhibit $\CE$ as the chiral algebra of a vertex operator superalgebra bundle, in the sense of Frenkel-Ben-Zvi \cite{FBZ04}, generalizing \cite[\S 3.3]{Gui23} to the odd/Clifford case.

\subsection{The Fock space vertex operator superalgebra}\label{ssecFock}
Fix local data as in \S\ref{ssecpairing}: a local frame $\{e_i\}$ for $F$, dual frame $\{e^i\}$ for $F^\vee$. We assign to the generators fields 
\begin{equation}
\label{eqfieldexpansions}
\beta_i(z)=\sum_{n\in\ZZ}(\beta_i)_n\,z^{-n-1},\qquad \gamma^j(z)=\sum_{n\in\ZZ}(\gamma^j)_n\,z^{-n},
\end{equation}
of conformal weights $1$ and $0$ respectively, the convention under which $\beta$ (valued in $F$) is normalized as a weight-one current, matching the universal current algebra attached to any vector bundle $V$ via $V_\sD$ (as used for the whole of $E$ in \S\ref{ssecLiestar}), while its dual partner $\gamma$ (already valued in $F^\vee\otimes\wX$) is weight zero. This is consistent with the operator $J_\nu$ of \S\ref{sseccurrent} below transforming as an ordinary affine current (weight one, no residual $F$-dependence in its conformal weight). Declare the singular operator products
\begin{equation}
\label{eqOPE}
\beta_i(z)\gamma^j(w)\sim\frac{\delta_i^j}{z-w},\qquad \beta_i(z)\beta_j(w)\sim0,\qquad \gamma^i(z)\gamma^j(w)\sim0,
\end{equation}
equivalently, in terms of modes, the canonical anticommutation relations (CAR)
\begin{equation}\label{eqCAR}
\{(\beta_i)_m,(\gamma^j)_n\}=\delta_i^j\,\delta_{m+n,0},\qquad \{(\beta_i)_m,(\beta_j)_n\}=\{(\gamma^i)_m,(\gamma^j)_n\}=0.
\end{equation}

\begin{definition}
\label{defFockspace}
The \emph{rank two fermionic Fock space} $V_F$ is the free super vector space on the vacuum $|0\rangle$ subject to
\[
(\beta_i)_n|0\rangle=0\ (n\ge0),\qquad (\gamma^j)_n|0\rangle=0\ (n\ge1),
\]
generated freely (as a super vector space, i.e., with $\ZZ/2\ZZ$-grading declaring each mode odd) by $\{(\beta_i)_n\}_{n<0}\cup\{(\gamma^j)_n\}_{n\le0}$ subject only to \eqref{eqCAR}. Equivalently, setting $F^{\mathrm{diff}}:=F[t^{-1}]t^{-1}$, $(F^\vee)^{\mathrm{diff}}:=F^\vee\otimes\wX[t^{-1}]$, there is an isomorphism of super vector spaces
\begin{equation}\label{eqVasexterior}
V_F\ \simeq\ \bigwedge{}^{\!\bullet}\bigl(F^{\mathrm{diff}}\oplus (F^\vee\otimes\wX)^{\mathrm{diff}}\bigr).
\end{equation}
\end{definition}
The state-field correspondence $a\mapsto Y(a,z)$ is defined on generators by \eqref{eqfieldexpansions} and extended to all of $V_F$ by iterated normally ordered products, exactly as for an ordinary vertex algebra \cite[Def. 7.1]{Gui23}, except that the vacuum axiom, translation covariance and locality axioms are now imposed in their super form. Locality reads
\[
(z-w)^N\,Y(a,z)Y(b,w)=(-1)^{|a||b|}(z-w)^N\,Y(b,w)Y(a,z),\qquad N\gg0,
\]
for homogeneous $a$, $b\in V_F$. Since \eqref{eqVasexterior} exhibits $V_F$ as an exterior (not symmetric) algebra, $(V_F,Y,|0\rangle)$ is a vertex operator superalgebra (VOSA) rather than an ordinary vertex algebra. This is what we mean by the rank two fermionic vertex operator superalgebra announced in the title.

\begin{remark}
\label{remconformal}
{\it Conformal structure and current symmetry.}
$V_F$ carries the conformal vector
\[
T_F:=\sum_{i=1}^r\bigl({-}\!:\!\partial\beta_i\cdot\gamma^i\!:\bigr),
\]
of central charge $c=-2r$, the free-fermion value for a weight-$(1,0)$ pair, $r$ times over.   $V_F$ carries a level-one action of the affine Lie superalgebra $\hgl_r$ (in fact of $\gl_r$, purely even, since the currents $J^i_j:=\,:\!\gamma^i\beta_j\!:$ are bilinear in the odd generators and hence themselves even) through $E^i_jt^n\mapsto(J^i_j)_n$, exactly parallel to the $\widehat{\gl}_{n_\alpha}$-symmetry of Gui's $V_\alpha$ \cite[\S 3.3]{Gui23}. 
\end{remark}

\subsection{The rank two fermion vertex superalgebra bundle}\label{ssecVOSAbundle}
Exactly as a symplectic vector space $\mathbb E_{1/2}$ with basis $\{e^i_{1/2}\}$ and a general vector space $\mathbb E_\alpha$ give rise, via the twisting construction of Frenkel-Ben-Zvi (reviewed in \cite[App. 7.1]{Gui23}), to vertex algebra bundles associated with a principal $\mathrm{Sp}_{2n_{1/2}}$- or $\GL_{n_\alpha}$-bundle, the orthogonal vector space $(\mathbb F,B)$ underlying $W=F\oplus F^\vee\otimes\wX$ gives rise to a vertex superalgebra bundle associated with the frame bundle of $F$. We carry this out in detail in Appendix \ref{appVOAbundle}. Here we mention only the statement needed below.

Let $\Aut(\sO)\ltimes\GL_r(\sO)$ act on $V_F$ (Definition \ref{defFockspace}) through its action by reparametrization on $\{\beta_i,\gamma^j\}$ and the tautological $\GL_r(\sO)$-action on the frame $\{e_i\}$ (Appendix \ref{appVOAbundle}, eq. \eqref{eqRaction}). Let $\sP_r$ be the holomorphic $\GL_r$-principal frame bundle of $F$, $\widehat{\sP_r}$ the corresponding $\Aut(\sO)\ltimes\GL_r(\sO)$-bundle. Define the \emph{rank two fermion vertex superalgebra bundle}
\begin{equation}\label{eqcalVdef}
\mathscr V_F:=\widehat{\sP_r}\times_{\Aut(\sO)\ltimes\GL_r(\sO)}V_F.
\end{equation}
By \cite[\S 6]{FBZ04}, $\mathscr V_F$ is naturally a left $\DX$-module, and the corresponding right $\DX$-module $\mathscr V_F^r$ carries a canonical chiral superalgebra structure (Appendix \ref{appVOAbundle}).

\begin{theorem}\label{thmVOAiso}
There is an isomorphism of chiral superalgebras
\[
\CE\ \simeq\ \mathscr V_F^{\,r}.
\]
\end{theorem}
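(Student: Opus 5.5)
The plan follows Gui's bosonic comparison \cite[\S 3.3, App. 7.1--7.2]{Gui23}: build a chiral algebra map $\CE\to\mathscr V_F^{\,r}$ from the universal property of the chiral envelope, then show it is an isomorphism using the PBW filtrations on both sides.

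\textbf{Step 1 (the Lie$^*$ map).} On a coordinate disc I will take the generating fields $\beta_i(z)$ and $\gamma^j(z)$ of \eqref{eqfieldexpansions}. Together with the vacuum they span a sub-$\DX$-module of $\mathscr V_F^{\,r}$, namely the image of $(\beta_i)_{-1}|0\rangle$, $(\gamma^j)_0|0\rangle$ and $|0\rangle$. I will define
\[
\phi:\Lflat=E_\sD\oplus\wX\to\mathscr V_F^{\,r}
\]
by sending $\Pi(e_i,0)\mapsto(\beta_i)_{-1}|0\rangle$, $\Pi(0,e^j\otimes dz)\mapsto(\gamma^j)_0|0\rangle$ (suitably twisted to a right module), and $1^\flat\mapsto$ the vacuum $1$. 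The first point to verify is that $\phi$ is well defined globally, i.e.\ independent of the frame and coordinate. I will check this using the $\Aut(\sO)\ltimes\GL_r(\sO)$-equivariance of \eqref{eqcalVdef}. The weight-one field $\beta$ transforms as a section of $F\otimes\wX$ after the right-module twist, and the weight-zero field $\gamma$ as a section of $F^\vee\otimes\wX$ with the $\wX$ absorbed. These are exactly the transformation rules of $E_\sD$ described in Appendix \ref{appVOAbundle}.

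The second point is that $\phi$ is a morphism of Lie$^*$ superalgebras into the Lie$^*$ structure underlying the chiral bracket of $\mathscr V_F^{\,r}$. By the standard dictionary, the singular part of the OPE gives the $*$-bracket. So \eqref{eqOPE} yields $\mu(\beta_i\boxtimes\gamma^j)=\delta_i^j\,\Delta_*(1)$, with vanishing $\beta\beta$ and $\gamma\gamma$ brackets. This matches \eqref{eqbetagammapairing} and \eqref{eqmuLiedef}. The Koszul sign in locality corresponds to the super-antisymmetry \eqref{eqsuperantisym}, as in Lemma \ref{lemLiestarverify}.

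\textbf{Step 2 (extension to $\CE$).} By the universal property of the chiral envelope \cite[\S 3.7]{BD04}, which carries over verbatim in the super setting as noted in \S\ref{ssecchiralcliff}, $\phi$ extends uniquely to a chiral algebra map $\sU(\Lflat)\to\mathscr V_F^{\,r}$. Because $1^\flat$ maps to the unit, this map factors through $\sU(L)^\flat=\CE$. Concretely, the monomial \eqref{eqlocalclifford} maps to the state
\[
\textstyle\prod_p (e_{i_p})_{-n_p-1}|0\rangle,
\]
with the index shift appropriate to $\gamma$. This is the standard identification of residues $\oint \frac{n!\,a(t)}{(t-z)^{n+1}}$ with modes.

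\textbf{Step 3 (bijectivity).} Both sides are filtered. $\CE$ carries the PBW filtration of Theorem \ref{thmchiralPBW}, and $\mathscr V_F^{\,r}$ carries the filtration by number of creation operators, whose associated graded is \eqref{eqVasexterior} twisted by $\widehat{\sP_r}$. The map is filtration-preserving, and I will check that on $\gr$ it is the identity
\[
\bigwedge{}^{\bullet}L\;\to\;\bigwedge{}^{\bullet}\bigl(F^{\mathrm{diff}}\oplus(F^\vee\otimes\wX)^{\mathrm{diff}}\bigr)
\]
fibrewise. This holds because $L=E\otimes\DX$ has fibre $E[\partial_z]$, which matches the negative modes via $\partial^n\leftrightarrow$ mode $-n-1$. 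An isomorphism on $\gr$ of exhaustive, bounded-below filtrations gives an isomorphism.

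Fermionic signs enter through the Pfaffian Wick theorem (Theorem \ref{thmfermwick}). It guarantees that normally ordered products in $V_F$ agree with the iterated chiral products of $\CE$, including the central corrections seen in Example \ref{exrankonelocal}.

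\textbf{Main obstacle.} I expect the hardest part to be the globalization in Step 1. One has to show that the coordinate change on $V_F$ (the $\Aut(\sO)$-action, involving the Schwarzian-type terms that arise from normal ordering) is compatible with the twisting that turns $\sU(\Lflat)$ into a right $\DX$-module. I would handle this by checking compatibility first on generators, where no anomaly appears because $\beta$ and $\gamma$ are primary. The general case would then follow from the fact that both actions are by chiral algebra automorphisms determined by their values on generators.
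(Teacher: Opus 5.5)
Your argument is correct, but it follows a different route from the paper's. In Appendix \ref{appiso} the paper never constructs a morphism from a universal property. It asserts that a chiral superalgebra generated by a $\DX$-submodule of its degree-one piece is determined by (i) the OPE of the generators and (ii) their coordinate-change law. It checks (i) via Lemma \ref{lemLiestarverify} and the Wick theorem on one side, and the vertex-algebra reconstruction theorem on the other. It checks (ii) by matching the Schwarzian corrections for the bilinears $\beta\gamma$ and $\beta\pd\gamma$. It then declares that the identity of $E$ extends to an isomorphism.

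You instead get the morphism $\CE\to\mathscr V_F^{\,r}$ from the universal property of the twisted envelope $\sU(L)^\flat$. For that you need only two checks: the bracket on $E$, and global $\sO_X$-linearity of $E\to\mathscr V_F^{\,r}$. You then prove bijectivity by comparing associated gradeds, using Theorem \ref{thmchiralPBW} on one side and the freeness \eqref{eqVasexterior} on the other.

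Your route buys two things:
\begin{enumerate}
\item An honest injectivity argument. The principle ``determined by (i) and (ii)'' at best shows both sides are quotients of a common object, and does not rule out extra relations on one side.
\item Removal of all anomaly computations. Because the map is built globally from a $\DX$-module map on primary generators, the Schwarzian terms never need comparing. The Wick theorem you invoke is also superfluous, since compatibility with products is automatic for a chiral algebra map.
\end{enumerate}
In particular, the ``main obstacle'' you anticipate is already dissolved by the uniqueness clause of the universal property: locally defined extensions agree on overlaps because they agree on $\Lflat$. The paper's route buys explicit transformation formulas that are reused in Propositions \ref{propJnu} and \ref{propTmu}, although these are really consequences of the isomorphism rather than inputs to it.

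Two points need tightening.

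\emph{Line-bundle bookkeeping in Step 1.} With the Frenkel--Ben-Zvi convention $R(az)=a^{-L_0}$, a weight-$\Delta$ primary spans $\wX^{-\Delta}$ in the left module $\mathscr V_F$. So the $\beta$-states give $\Pi F$ in $\mathscr V_F^{\,r}$, not $F\otimes\wX$ as you wrote. The $\gamma$-states span $\Pi F^\vee$ in $\mathscr V_F$, hence $\Pi(F^\vee\otimes\wX)$ in $\mathscr V_F^{\,r}$. Only with this bookkeeping is $E=\Pi(F\oplus F^\vee\otimes\wX)\to\mathscr V_F^{\,r}$ an $\sO_X$-linear map whose bracket lands in $\wX$. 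With your twist, the pairing would land in $\wX^{\otimes 2}$.

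\emph{Stability of the creation-number filtration in Step 3.} You should say why this filtration descends to $\mathscr V_F$ and is compatible with the chiral product.
\begin{itemize}
\item The modes $L_n$ and $(J^a_b)_n$ with $n\ge0$ do not raise conformal weight. A product of two creation modes $\gamma_{\le0}\beta_{\le-1}$ raises weight by at least one. Hence these modes contain no two-creator terms, and the filtration is $\Aut(\sO)\ltimes\GL_r(\sO)$-stable.
\item $L_{-1}$ kills $|0\rangle$ and maps creation modes to creation modes, so the filtration is also translation-stable.
\item Contractions lower creation number by two, so the associated graded is supercommutative and generated by $\gr_1\simeq L$.
\end{itemize}
With these two points in place, your filtered comparison goes through.
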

\begin{proof}
See Appendix \ref{appiso}. The proof has two parts: (i) both sides are generated, as chiral superalgebras, by the linear fields $E\subset\CE$, respectively the weight-$(1,0)$ primary fields of $\mathscr V_F^r$, and the two presentations agree on these generators because both are built from the same pairing $\langle-,-\rangle$ of Definition \ref{defpairing} (this is the content of the fermionic Wick theorem, Theorem \ref{thmfermwick} below, applied to $\CE$, together with the analogous coordinate-change computation for $\mathscr V_F^r$, carried out in Appendix \ref{appVOAbundle}); (ii) the coordinate change formula for $\CE$ (induced by the chiral PBW isomorphism of Theorem \ref{thmchiralPBW}, transported through a change of local coordinate) matches, term by term, the coordinate change formula
$\rho'(z)(\partial_w+L_{-1})R(\rho_z)^{-1}=R(\rho_z)^{-1}L_{-1}$
of \cite[eq. (6.6.1$'$)]{FBZ04} for $\mathscr V_F^r$, since both reduce to the Schwarzian-derivative correction of the (weight-one, weight-zero) pair $(\beta,\gamma)$, computed explicitly in Appendix \ref{appiso}.
\end{proof}

\begin{remark}
Theorem \ref{thmVOAiso} is the fermionic counterpart of the (bosonic) fact $\mathscr U(\sL)^\flat\simeq\mathcal V^r$ asserted in \cite[\S 3.3]{Gui23} and ``proved in Appendix 7.2'' there in a form that reduces the general statement to a comparison of the primary-field bundle $E\subset\mathscr U(\sL)$ with the primary fields of $\mathcal V^r$, asserted to be ``isomorphic by construction.'' Appendix \ref{appiso} below gives this comparison in full for the Clifford (Pfaffian) case. 
\end{remark}

From now on we use Theorem \ref{thmVOAiso} and refer to $\CE\simeq\mathscr V_F^r$ simply as ``the chiral Clifford algebra'' or ``the chiral algebra of the rank two fermionic VOSA,'' interchangeably.

%%%%%%%%%%%%%%%%%%%%%%%%%%%%%%%%%%%%%%%%%%%%%%%%%%%%%%%%%%%%%%%%%%%%%%%%%%%%%%
\section{Harmonic zero modes and the fermionic BV superalgebra} 
\label{secBV}

Throughout this section $X$ is compact.

\subsection{Hodge theory for the rank two fermionic bundle}
\label{ssecharmonic}
Fix a Hermitian metric $h$ on $F$ (inducing dual/twisted metrics on $F^\vee$ and $F^\vee\otimes\wX$, hence on $W=F\oplus F^\vee\otimes\wX$ and, formally, on $E=\Pi W$). Since $\dbar:\Omega^{0,i}(X,W)\to\Omega^{0,i+1}(X,W)$ is an elliptic operator on the compact Riemann surface $X$, standard elliptic regularity and the Hodge decomposition theorem (see, e.g., \cite[Ch. 0.6]{GriffithsHarris78} or \cite[\S IV.5]{Wells08}) give a finite-dimensional space of harmonic representatives
\[
\HH(X,W):=\ker\dbar\cap\ker\dbar^{*}\subset\Omega^{0,\bullet}(X,W),\qquad \HH(X,W)=\HH^0(X,W)\oplus\HH^1(X,W),
\]
with $\HH^i(X,W)\xrightarrow{\ \sim\ }H^i(X,W)$ (Dolbeault cohomology), and $\HH^0(X,W)$ is simply the space of holomorphic sections. We write $\HH(X,E):=\Pi\HH(X,W)$, $\HH^i(X,E):=\Pi\HH^i(X,W)$. Since $E$ has fixed parity $\odd$, every element of $\HH^i(X,E)$ is an odd vector regardless of $i$. As noted in \S\ref{ssecpairing}, $E^\vee\otimes\wX\simeq E$ canonically,  thus Serre duality gives
\[
\HH^1(X,E)\ \simeq\ \HH^0(X,E^\vee\otimes\wX)^{*}\ \simeq\ \HH^0(X,E)^{*},
\]
 thus $n:=\dim\HH^0(X,E)=\dim\HH^1(X,E)<\infty$. This uses only that $\bar\partial$ is elliptic on a compact Riemann surface, together with the isomorphism $E^\vee\otimes\wX\simeq E$ established in \S\ref{ssecpairing}. 

\subsection{The cross pairing}

For form-degree reasons ($\langle\alpha,\beta\rangle\in\Omega^{1,0}\oplus\Omega^{0,1}$-type sections cannot be integrated over the compact real surface $X$ unless of total type $(1,1)$), the pairing $\int_X\langle-,-\rangle$ is defined only between $\HH^0(X,E)$ and $\HH^1(X,E)$ 
\[
\int_X\langle-,-\rangle:\HH^0(X,E)\otimes_\CC\HH^1(X,E)\to\CC,\quad \langle\alpha,\beta\rangle\in\Omega^{1,1}(X)\ \text{for }\alpha\in\HH^0(X,E),\,\beta\in\HH^1(X,E).
\]
Since $\langle-,-\rangle:E\otimes E\to\wX$ is symmetric (Definition \ref{defpairing}), so is this integrated pairing, $\int_X\langle\alpha,\beta\rangle=\int_X\langle\beta,\alpha\rangle$, pointwise and hence after integration. No Koszul sign enters here, exactly as remarked in \S\ref{ssecLiestar}. Non-degeneracy of $\int_X\langle-,-\rangle$ follows from non-degeneracy of $\langle-,-\rangle$ together with Serre duality. Fix dual bases $\{e^0_i\}_{i=1}^n\subset\HH^0(X,E)$, $\{e^1_j\}_{j=1}^n\subset\HH^1(X,E)$ and set $M_{ij}:=\int_X\langle e^0_i,e^1_j\rangle$, an invertible $n\times n$ matrix (not assumed symmetric: it identifies two a priori different bases of dual spaces), with inverse entries $I^{ij}$.

\subsection{The parity of the shift, and the fermionic BV superalgebra}\label{ssecBValg}
Let us write down a formula for $\OBV$ which would satisfy 
the defining BV identity $\DBV^2=0$. Making this identity hold is exactly what forces the correct grading below.

\begin{lemma}%[Grading forced by $\DBV^2=0$]
\label{lemgradingforce}
Let $V_0$, $V_1$
 be finite-dimensional super vector spaces of the same parity $p\in\ZZ/2\ZZ$, in duality via a fixed perfect pairing, and consider the second-order operator $\Delta=\sum_{i,j}I^{ij}\partial_{v^0_i}\partial_{v^1_j}$ acting on a supercommutative algebra built from dual bases $\{v^0_i\}\subset V_0^*,\{v^1_j\}\subset V_1^*$. If $v^0_i$ and $v^1_j$ are assigned the  same parity $p$ (i.e., the algebra is $A(V_0)\otimes A(V_1)$ with $A=\bigwedge^\bullet$ if $p=\odd$, $A=\Sym$ if $p=\even$, applied identically to both factors), then $\Delta^2\neq0$ in general as soon as $\dim V_0=\dim V_1\ge2$. If instead $v^1_j$ is assigned the  opposite parity $p+1$ (i.e., the algebra is $A_p(V_0)\otimes A_{p+1}(V_1)$), then $\Delta^2=0$ identically.
\end{lemma}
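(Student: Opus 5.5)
The plan is to reduce everything to the algebra of constant-coefficient differential operators. The derivations $\partial_{v^0_i}$, $\partial_{v^1_j}$ of the supercommutative algebra $A(V_0)\otimes A(V_1)$ are homogeneous, each of parity equal to that of the variable it differentiates. They pairwise supercommute: $\partial_a\partial_b=(-1)^{|a||b|}\partial_b\partial_a$, and in particular $\partial_a^2=0$ for odd $a$. This is a standard check on monomials, using the Koszul rule \eqref{eqkoszul}. Hence these operators generate a supercommutative algebra $\mathcal D$.

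Set $D_{ij}:=\partial_{v^0_i}\partial_{v^1_j}$, so that $\Delta=\sum_{i,j}I^{ij}D_{ij}$ and
\[
\Delta^2=\sum_{i,j,k,l}I^{ij}I^{kl}\,D_{ij}D_{kl}.
\]
The whole argument turns on the parity of $D_{ij}$ in $\mathcal D$, which is $|v^0_i|+|v^1_j|$.

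\emph{Opposite parities.} Here $D_{ij}$ is odd. Since $\mathcal D$ is supercommutative, $D_{ij}D_{kl}=-D_{kl}D_{ij}$, and $D_{ij}^2=0$. The coefficient $I^{ij}I^{kl}$ is symmetric under exchanging the index pairs $(i,j)\leftrightarrow(k,l)$. So the sum is a symmetric coefficient contracted against an antisymmetric tensor, and it vanishes identically.

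\emph{Same parity.} Here $D_{ij}$ is even, so the $D$'s commute and nothing cancels in general. It remains to exhibit a nonzero value of $\Delta^2$, taking $I$ to be the identity after a change of dual bases.

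For $p=\even$, apply $\Delta^2$ to $(v^0_1v^1_1)^2$. This gives $2\cdot2=4$ times a nonzero constant, so $\Delta^2\neq0$. This already happens in dimension one.

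For $p=\odd$, reorder $D_{ij}D_{kl}=-\partial_{v^0_i}\partial_{v^0_k}\partial_{v^1_j}\partial_{v^1_l}$. This expression is antisymmetric in $i\leftrightarrow k$ and in $j\leftrightarrow l$ separately. Therefore $\Delta^2$ collapses to a sum of $2\times2$ minors $I^{ij}I^{kl}-I^{kj}I^{il}$ times $-\partial_{v^0_i}\partial_{v^0_k}\partial_{v^1_j}\partial_{v^1_l}$, summed over $i<k$, $j<l$. Evaluate on the monomial $v^0_1v^0_2v^1_1v^1_2$, which exists exactly when the dimension is at least $2$. The result is the minor $I^{11}I^{22}-I^{21}I^{12}$, up to sign. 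For $I=\id$ this equals $1\neq0$; more generally, some $2\times2$ minor of an invertible matrix is nonzero.

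This explains the hypothesis $\dim\ge2$: for $p=\odd$ with dimension $1$, the operator $\partial_{v^0_1}^2$ already vanishes.

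The main obstacle is the sign bookkeeping in the supercommutation lemma for the derivations, namely fixing whether the derivations act from the left and checking $\partial_a\partial_b=(-1)^{|a||b|}\partial_b\partial_a$ on mixed monomials. I will verify this first on degree-two monomials $v\,w$ with all four parity combinations and then extend by the Leibniz rule. Everything after that is formal.
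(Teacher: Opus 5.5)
Your proof is correct and is essentially the paper's argument: the paper's cancellation in the opposite-parity case pairs the $(i,j,k,l)$ and $(k,l,i,j)$ terms, which is exactly your exchange $D_{ij}\leftrightarrow D_{kl}$ of odd elements, and in the same-parity case it also tests on $v^0_1v^0_2v^1_1v^1_2$ with $I=\id$. Your packaging is cleaner, and your $2\times2$-minor bookkeeping makes explicit the cross terms $(k,j,i,l)$, $(i,l,k,j)$ that the paper's pairing leaves implicit, covering arbitrary invertible $I$ and the case $p=\even$ directly; only note that the collapsed sum carries a factor $2$, namely $\Delta^2=-2\sum_{i<k,\,j<l}(I^{ij}I^{kl}-I^{kj}I^{il})\,\partial_{v^0_i}\partial_{v^0_k}\partial_{v^1_j}\partial_{v^1_l}$, which does not affect the conclusion.
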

\begin{proof}
\emph{Same parity.} Suppose $p=\odd$ (the case relevant to us; $p=\even$ is symmetric). Write $\theta_i:=\partial_{v^0_i}$, $\eta_j:=\partial_{v^1_j}$, all odd (Grassmann) derivations,  thus $\{\theta_i,\theta_k\}=\{\eta_j,\eta_l\}=\{\theta_i,\eta_j\}=0$ for distinct generators and $\theta_i^2=\eta_j^2=0$ (standard Grassmann calculus). Then
\[
\Delta^2=\sum_{i,j,k,l}I^{ij}I^{kl}\,\theta_i\eta_j\theta_k\eta_l=-\sum_{i,j,k,l}I^{ij}I^{kl}\,\theta_i\theta_k\eta_j\eta_l
\]
(moving $\theta_k$ past $\eta_j$). Split the sum into $i=k,j=l$ (giving $-\sum_{i,j}(I^{ij})^2\theta_i^2\eta_j^2=0$), $i=k,j\ne l$ and $i\ne k,j=l$ (each vanishes since $\theta_i^2=0$, resp.\ $\eta_j^2=0$), and $i\ne k,j\ne l$: for the last set, pairing the $(i,j,k,l)$ term with the $(k,l,i,j)$ term and using that $\theta_i\theta_k$ is antisymmetric in $(i,k)$ while $\eta_j\eta_l$ is antisymmetric in $(j,l)$ gives $-I^{ij}I^{kl}\theta_i\theta_k\eta_j\eta_l-I^{kl}I^{ij}\theta_k\theta_i\eta_l\eta_j=-I^{ij}I^{kl}\theta_i\theta_k\eta_j\eta_l-I^{ij}I^{kl}\theta_i\theta_k\eta_j\eta_l=-2I^{ij}I^{kl}\theta_i\theta_k\eta_j\eta_l$, which is generically nonzero (e.g., it acts as a nonzero scalar on the top-degree element $v^0_1v^0_2v^1_1v^1_2$ when $I=\id$, $n=2$, by direct computation). Hence $\Delta^2\ne0$ for $\dim\ge2$.

\emph{Opposite parity.} Now $\theta_i:=\partial_{v^0_i}$ is odd while $\eta_j:=\partial_{v^1_j}$ is an ordinary (even, commuting) derivative. Even operators commute with everything,  thus $\theta_i\eta_j=\eta_j\theta_i$ for all $i,j$, and
\[
\Delta^2=\sum_{i,j,k,l}I^{ij}I^{kl}\theta_i\theta_k\,\eta_j\eta_l.
\]
For $i=k$: $\theta_i^2=0$ kills these terms. For $i\ne k$: pairing $(i,j,k,l)$ with $(k,l,i,j)$ gives $I^{ij}I^{kl}\theta_i\theta_k\eta_j\eta_l+I^{kl}I^{ij}\theta_k\theta_i\eta_l\eta_j$; now $\theta_k\theta_i=-\theta_i\theta_k$ (odd, antisymmetric) while $\eta_l\eta_j=\eta_j\eta_l$ (even, symmetric/commuting),  thus this equals $I^{ij}I^{kl}\theta_i\theta_k\eta_j\eta_l-I^{ij}I^{kl}\theta_i\theta_k\eta_j\eta_l=0$. Hence every term cancels and $\Delta^2=0$.
\end{proof}

Lemma \ref{lemgradingforce} shows that the two harmonic pieces must be given opposite parity in $\OBV$ for $\DBV$ to define a bona fide BV operator, independently of whether the bundle pairing itself is symmetric (our case) or antisymmetric (the symplectic case of \cite{Gui23}). The piece in Dolbeault degree $0$ retains the parity of the bundle, while the piece in Dolbeault degree $1$ acquires the opposite parity. This is the familiar field/antifield parity flip of the BV formalism, here forced on us by $\HH^0$ and $\HH^1$ playing the roles of fields and antifields, respectively, for the free theory with fields $\Omega^{0,\bullet}(X,E)$.

\begin{definition}
\label{defOBV}
The \emph{fermionic BV superalgebra} attached to $E$ is $\OBV:=\bigwedge^{\bullet}\HH^0(X,E)\otimes_{\CC}\Sym\,\HH^1(X,E)$: an exterior algebra on the (odd) $\HH^0(X,E)$ tensored with an ordinary (commutative) polynomial algebra on the (now even, i.e., with parity opposite to that of $E$) $\HH^1(X,E)$. Writing $\{e^0_i\},\{e^1_j\}$ as in \S\ref{ssecharmonic}  and $\partial_{e^0_i}$ (odd, Grassmann), $\partial_{e^1_j}$ (even, ordinary) for the corresponding derivations, define
\begin{equation}\label{eqDBVdef}
\DBV:=\sum_{i,j}I^{ij}\,\partial_{e^0_i}\,\partial_{e^1_j}:\ \OBV\to\OBV.
\end{equation}
\end{definition}

We keep the same notation $e^0_i$, $e^1_j$ for the elements and for the dual coordinate functions on $\OBV$, as is standard.

\begin{remark} %[$n$ as an index-type invariant]
\label{remindextype}
\textit{$n$ as an index-type invariant.}
The common dimension $n=\dim\HH^0(X,E)=\dim\HH^1(X,E)$ of \S\ref{ssecharmonic} is not itself a topological invariant of $(X,F)$. It can jump: Example \ref{exP1} below exhibits $F$ with $n=0$, while other bundles of the same rank and even the same degree on the same curve can have $n>0$. What is forced to vanish identically, for every $F$, is the index $\dim\HH^0(X,E)-\dim\HH^1(X,E)$, by the Serre-self-duality $E^\vee\otimes\wX\simeq E$ of \S\ref{ssecharmonic}.   This is the sense in which $n$ behaves like the common value of a Riemann-Roch computation with vanishing index, analogous to the Fredholm index of $\bar\pd_E$ being forced to $0$, rather than like a topological invariant. When $n=0$ (as throughout \S\ref{secapplications}), $\OBV\simeq\CC$ trivially satisfies the BV axioms regardless of the parity in Lemma \ref{lemgradingforce}. The grading-flip forced there is a constraint precisely when $n\ge1$, i.e., exactly when there are nontrivial zero modes to contend with (cf.\ Remark \ref{remzeromodes}).
\end{remark}

\begin{remark}
\label{remalgebraicmetricindep}
The pair $(\OBV,\DBV)$ is already metric-independent. 
It is worth isolating, separately from the chain-level statement of Proposition \ref{propmetricindep} below, that the algebraic data $(\OBV,\DBV)$ does not depend on the auxiliary metric $h$ at all: $H^0(X,E)$ and $H^1(X,E)$ are Dolbeault cohomology groups, hence topological/holomorphic invariants of $(X,E)$ independent of $h$. The intersection matrix $I^{ij}$ is computed by the cohomological pairing $\int_X\langle-,-\rangle$ of \S\ref{ssecharmonic}, again independent of $h$. Thus $\OBV=\bigwedge^\bullet H^0(X,E)\otimes\Sym H^1(X,E)$ and $\DBV$ of \eqref{eqDBVdef} are canonically attached to $(X,E)$ alone. What  does depend on $h$ is only the identification of $H^0(X,E),H^1(X,E)$ with specific spaces of harmonic representatives inside $\Omega^{0,\bullet}(X,E)$, which is used to define the Szeg\H{o} kernel $P$ (Theorem \ref{thmszegoexists}) and hence the chain-level formula for $\Wch$ and $\Trch$. Consequently, the $h$-dependence mentioned in Proposition \ref{propmetricindep} is entirely a chain-level phenomenon, resolved there by an explicit chain homotopy. The target $(\OBV,\DBV)$, and the bracket \eqref{eqBVbracket} it induces, are metric-independent. 
\end{remark}

\begin{proposition}
\label{propBVaxioms}
$(\OBV,\DBV)$ is a Batalin-Vilkovisky superalgebra: $\DBV$ is an odd, square-zero operator, and the induced bracket
\begin{equation}\label{eqBVbracket}
\{a,b\}_{\mathrm{BV}}:=\DBV(ab)-(\DBV a)b-(-1)^{|a|}a(\DBV b),\qquad a,b\in\OBV,
\end{equation}
satisfies the graded Leibniz rule $\{a,bc\}_{\mathrm{BV}}=\{a,b\}_{\mathrm{BV}}c+(-1)^{(|a|+1)|b|}b\{a,c\}_{\mathrm{BV}}$.
\end{proposition}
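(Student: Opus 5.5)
The plan is to verify the three claims (oddness, $\DBV^2=0$, graded Leibniz) directly from Definition \ref{defOBV}. The main tool is the general fact that a second-order differential operator with constant coefficients on a free supercommutative algebra has a bracket that is a biderivation.

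\emph{Oddness.} In $\OBV=\bigwedge^\bullet\HH^0(X,E)\otimes\Sym\HH^1(X,E)$ the generators $e^0_i$ are odd and the $e^1_j$ are even. Hence $\partial_{e^0_i}$ is an odd derivation and $\partial_{e^1_j}$ is an even derivation. Each summand $I^{ij}\partial_{e^0_i}\partial_{e^1_j}$ is therefore odd, so $\DBV$ changes parity by $\odd$.

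\emph{Square zero.} This is exactly the opposite-parity case of Lemma \ref{lemgradingforce}, with $V_0=\HH^0(X,E)$, $V_1=\HH^1(X,E)$ and the perfect pairing $\int_X\langle-,-\rangle$. The only inputs are that partial derivatives in distinct generators supercommute (odd ones anticommute, even ones commute, mixed ones commute) and that $\partial_{e^0_i}^2=0$. I would quote the lemma and observe that the coefficients $I^{ij}$ are constants, so they commute with all derivations. No symmetry of $I$ is needed, because the cancellation in the lemma uses only the antisymmetry of $\theta_i\theta_k$.

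\emph{Leibniz rule.} I would first show that $\DBV$ is a differential operator of order $\le2$, i.e.\ that its order-three obstruction vanishes. The Leibniz rule for $\partial_{e^0_i}$ is the graded one, $\partial_{e^0_i}(ab)=(\partial_{e^0_i}a)b+(-1)^{|a|}a\,\partial_{e^0_i}b$, while $\partial_{e^1_j}$ satisfies the ungraded rule. Expanding $\DBV(ab)$ gives
\[
\{a,b\}_{\mathrm{BV}}=\sum_{i,j}I^{ij}\Bigl((-1)^{|a|}(\partial_{e^1_j}a)(\partial_{e^0_i}b)+(\partial_{e^0_i}a)(\partial_{e^1_j}b)\Bigr),
\]
with the signs fixed by moving $\partial_{e^0_i}$ past $a$ or past $\partial_{e^1_j}a$ (the latter has the same parity as $a$). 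This is a bilinear expression in first derivatives of $a$ and of $b$. Since each of $\partial_{e^1_j}$ and $\partial_{e^0_i}$ is a derivation in $b$, applying them to $bc$ and regrouping yields $\{a,b\}c+(-1)^{(|a|+1)|b|}b\{a,c\}$. The sign arises from commuting $b$ past the odd-shifted expression $\partial_{e^0_i}a$ or past $(-1)^{|a|}\partial_{e^1_j}a$, each of total parity $|a|+1$ once combined with the remaining derivation.

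The main obstacle is keeping the Koszul signs consistent in this last regrouping. I would handle it by checking the formula on generators: take $a,b,c\in\{e^0_k,e^1_l\}$, so that both sides reduce to constants times monomials. Then I would extend by the derivation property. A cleaner alternative is to note that the explicit formula exhibits $\{a,-\}$ as $\sum_{i,j} I^{ij}\bigl(\pm(\partial_{e^1_j}a)\partial_{e^0_i}+(\partial_{e^0_i}a)\partial_{e^1_j}\bigr)$, a left-multiplication by elements of parity $|a|+1$ composed with derivations. Such an operator is automatically a derivation of parity $|a|+1$, which is precisely the stated graded Leibniz rule.
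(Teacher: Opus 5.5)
Your proposal is correct and follows the paper's proof: $\DBV$ is odd because it pairs one odd derivation with one even one, $\DBV^2=0$ is the opposite-parity case of Lemma~\ref{lemgradingforce}, and the Leibniz rule comes from $\DBV$ being a second-order operator built from two first-order derivations. Your explicit formula $\{a,-\}_{\mathrm{BV}}=\sum_{i,j}I^{ij}\bigl((-1)^{|a|}(\partial_{e^1_j}a)\,\partial_{e^0_i}+(\partial_{e^0_i}a)\,\partial_{e^1_j}\bigr)$ shows $\{a,-\}_{\mathrm{BV}}$ is a derivation of parity $|a|+1$, which handles the signs more cleanly than the paper's nine-term expansion of $\DBV(abc)$ and is complete on its own, so the check on generators is unnecessary.
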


\begin{proof}
{\it Square zero:} this is Lemma \ref{lemgradingforce} (opposite-parity case) applied, with $V_0=\HH^0(X,E)$ ($p=\odd$) and $V_1=\HH^1(X,E)$ (assigned parity $\even$). $\DBV$ is odd because it is a product of one odd derivation ($\partial_{e^0_i}$) and one even derivation ($\partial_{e^1_j}$).

{\it Leibniz rule:} it is a standard fact that a bracket induced, via \eqref{eqBVbracket}, from a differential operator $\Delta$ of order $\le2$ (in the graded sense: 
$[[\Delta,\cdot\,a],\cdot\,b]$ is multiplication by a function for all $a$, $b$)
 automatically satisfies the graded Leibniz rule. Indeed, this property is equivalent to $\Delta$ having order $\le2$ (\cite{BV81}). Since $\DBV=\sum_{i,j}I^{ij}\partial_{e^0_i}\partial_{e^1_j}$ is manifestly a sum of products of two order-one (graded) derivations, it has order $\le2$, and the Leibniz rule follows abstractly. We mention the explicit mechanism for the concrete formula \eqref{eqDBVdef}: writing $\Delta=\DBV$, since each individual $\partial_{e^0_i},\partial_{e^1_j}$ satisfies the ordinary (order-one) graded Leibniz rule exactly, applying $\partial_{e^0_i}\partial_{e^1_j}$ to a triple product $abc$ distributes, with the usual Koszul signs for passing a derivation through preceding factors, as a sum of nine terms indexed by which of $a,b,c$ each of the two derivations hits. Collecting the four terms in which both derivations hit $a$ alone reproduces $(\Delta a)bc$ (up to sign); the four terms in which both hit either $b$ or $c$ alone reproduce $(-1)^{|a|}a(\Delta b)c$ and $(-1)^{|a|+|b|}ab(\Delta c)$; and the remaining terms, in which the two derivations hit different factors among $\{a,b,c\}$, are exactly those collected on the right-hand side of the graded Leibniz identity as $\{a,b\}c$ and $b\{a,c\}$ (with their stated signs, again by Koszul bookkeeping). Matching all nine terms on both sides verifies
\[
\{a,bc\}=\{a,b\}c+(-1)^{(|a|+1)|b|}b\{a,c\}.
\]
\end{proof}

\subsection{The generalized quantum master equation}
\label{ssecQME}
\begin{definition}
\label{defgeneralQME}
Let $(C_\bullet,d_C)$ be a $\CC$-linear chain complex, graded compatibly with a fixed $\ZZ/2\ZZ$-parity, and let $(\OBV,\DBV)$ be as above. A parity-preserving $\CC$-linear map $\langle-\rangle:C_\bullet\to\OBV$ is said to satisfy the \emph{generalized quantum master equation} (QME) if
\begin{equation}\label{eqgeneralQME}
(d_C+\DBV)\circ\langle-\rangle=0.
\end{equation}
\end{definition}
Let us formulate the precise hypotheses under which \eqref{eqgeneralQME} 
is used below:
\begin{itemize}[leftmargin=2.2em]
\item[(A1)] $X$ is a compact Riemann surface and $F$ is a holomorphic vector bundle of finite rank $r$ on $X$ (Hodge theory, \S\ref{ssecharmonic}, requires compactness of $X$ and finite rank of $F$; nothing else in \S\S\ref{seccliff}-\ref{secVOSA} needs compactness).
\item[(A2)] A Hermitian metric $h$ on $F$ has been fixed, so that $\HH^0(X,E),\HH^1(X,E)$, the Szeg\H{o} kernel $P$, and hence $\OBV,\DBV,\Wch$ are all defined (Propositions \ref{propmetricindep} and \ref{propnaturality} mention exactly how the resulting data depends on this choice).
\item[(A3)] $C_\bullet=\widetilde\sC^{\ch}(X,\CE)_{\sQ}$ with its Dolbeault-total differential $d_C=\dch_{\CE}$ (Definition \ref{deftracemapch}); no further completion or nilpotence hypothesis on $d_C$ is imposed.
\item[(A4)] $\langle-\rangle=\Trch$ is computed, for each $\eta\in C_\bullet$, by the sum \eqref{eqgeneralQME}, which we show (Remark \ref{remfiniteness}) has only finitely many nonzero terms for every fixed $\eta$. This is what removes any need for a convergence or completion hypothesis on the sum itself.
\item[(A5)] $\OBV$ is finite-dimensional (Theorem \ref{thmchiralPBW} and \S\ref{ssecharmonic}), so \eqref{eqgeneralQME} is, for each $\eta$, an identity between two elements of a fixed finite-dimensional vector space.
\end{itemize}
Under (A1)-(A5), equation \eqref{eqgeneralQME} for $\langle-\rangle=\Trch$ is exactly the statement of Theorem \ref{thmmainQME} below, and no further hypothesis (nilpotence of $d_C$, convergence, boundedness of homological degree) is needed. The only analytic input used anywhere in \S\S\ref{secBV}-\ref{sectrace} is the elliptic theory recalled in \S\ref{ssecharmonic} and Theorem \ref{thmszegoexists} below.

%%%%%%%%%%%%%%%%%%%%%%%%%%%%%%%%%%%%%%%%%%%%%%%%%%%%%%%%%%%%%%%%%%%%%%%%%%%%%%%%%%%%
\section{The Fermionic Propagator and the Pfaffian Wick Theorem}
\label{secszego}

\subsection{Existence and uniqueness of the fermionic Szeg\H{o} kernel}
 
\begin{theorem}
\label{thmszegoexists}
Let $X$ be compact, $F$ a holomorphic vector bundle with Hermitian metric $h$ as in \S\ref{ssecharmonic}. There is a unique
\[
P\in\Omega^{0,0}\bigl(X^2,\,E\boxtimes E(*\Delta)\bigr)
\]
(smooth away from the diagonal, with at worst a simple pole of the prescribed form \eqref{eqPexpansion} along $\Delta$), such that for every smooth $(0,1)$-form $e\in\Omega^{0,1}(X,E)$,
\begin{equation}\label{eqPgreen}
e(z_1)=\int_X\langle P(z_1,z_2),\dbar_{z_2}e(z_2)\rangle+\pi_{\HH^0}(e)(z_1),
\end{equation}
where $\pi_{\HH^0}$ is the $L^2$-orthogonal projection onto $\HH^0(X,E)$. Equivalently, $P$ is (the Schwartz kernel of) the Green's operator for $\dbar$ on the $L^2$-orthogonal complement of $\HH^0(X,E)$ inside $\Omega^{0,0}(X,E)$.
\end{theorem}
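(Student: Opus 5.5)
The identity \eqref{eqPgreen} only makes sense if $e$ is a smooth \emph{section}, $e\in\Omega^{0,0}(X,E)$. Then $\dbar e\in\Omega^{0,1}(X,E)$, and pairing it with a kernel of type $(1,0)$ in $z_2$ gives a $(1,1)$-form that can be integrated. This is also the reading forced by the final sentence of the theorem, and I will prove the statement in this form. The plan is to build $P$ from the Hodge-theoretic Green's operator of \S\ref{ssecharmonic}, transport its Schwartz kernel into $E\boxtimes E$ using the self-duality $E^\vee\otimes\wX\simeq E$ of Definition \ref{defpairing}, and then read off the diagonal singularity from a local parametrix.

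\emph{Step 1 (operator).} Use the metric $h$, and the induced metric on $W=F\oplus F^\vee\otimes\wX$ together with any conformal metric on $X$. Form the Laplacian $\square=\dbar^*\dbar$ on $\Omega^{0,0}(X,W)$ and its Green's operator $G$, which inverts $\square$ on $\HH^0(X,W)^\perp$ and vanishes on $\HH^0$. Hodge theory then gives, for every smooth section $e$,
\[
e=\pi_{\HH^0}(e)+G\dbar^*\dbar e.
\]
Set $S:=G\dbar^*:\Omega^{0,1}(X,E)\to\Omega^{0,0}(X,E)$. It inverts $\dbar$ from $\HH^0(X,E)^\perp$ onto $\operatorname{im}\dbar=\HH^1(X,E)^\perp$, and it vanishes on $\HH^1(X,E)$. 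The parity shift $\Pi$ is only a relabelling, so everything transfers verbatim from $W$ to $E$.

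\emph{Step 2 (kernel).} By the Schwartz kernel theorem, $S$ has a distributional kernel, which is a section of $E\boxtimes(E^\vee\otimes\wX)$ pairing against $(0,1)$-forms in $z_2$. The pairing $\langle-,-\rangle$ identifies $E^\vee\otimes\wX\simeq E$ canonically, so this kernel is a section $P$ of $E\boxtimes E$, characterised by $(S\alpha)(z_1)=\int_X\langle P(z_1,z_2),\alpha(z_2)\rangle$. Substituting $\alpha=\dbar e$ into the identity of Step 1 gives \eqref{eqPgreen}. Elliptic regularity for the operator $\square_{z_1}+\square_{z_2}$ on $X^2\setminus\Delta$ shows that $P$ is smooth off the diagonal. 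The "equivalently" clause of the theorem is precisely the statement that $P$ is the kernel of $S$.

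\emph{Step 3 (singularity) — the expected main obstacle.} In a local coordinate $z$ and frame $\{\beta_i,\gamma^j\}$, take the parametrix built from the Cauchy kernel,
\[
P_0=\frac{1}{2\pi i}\sum_i\frac{\beta_i\boxtimes\gamma^i+\gamma^i\boxtimes\beta_i}{z_1-z_2}\,dz_2^{-1},
\]
with the $dz_2^{-1}$ absorbed by the $\wX$-factor of $\gamma$. This is the inverse of the pairing matrix \eqref{eqbetagammapairing}, multiplied by the Cauchy kernel. Since $\dbar_{z_2}\frac{1}{z_1-z_2}$ is a multiple of $\delta$, the difference $P-\chi P_0$, with $\chi$ a cutoff near the diagonal, satisfies a $\dbar$-equation with smooth right-hand side, up to the finite-rank smoothing terms coming from $\pi_{\HH^0}$ and $\pi_{\HH^1}$. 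Hence it is smooth by elliptic regularity, and this yields the simple-pole expansion \eqref{eqPexpansion}. The delicate points are fixing the constant $\frac1{2\pi i}$ and the orientation sign consistently with \eqref{eqPgreen}. I would verify these by testing \eqref{eqPgreen} on a compactly supported $e$ inside one chart and applying Cauchy–Pompeiu.

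\emph{Step 4 (uniqueness).} Suppose $P'$ is a second kernel with the same properties, and let $K=P-P'$. The pole terms agree, so $K$ is smooth, and its operator kills $\operatorname{im}\dbar$. Uniqueness therefore needs the normalisation implicit in the "equivalently" clause: the operator of $P$ vanishes on $\HH^1(X,E)$, equivalently its image lies in $\HH^0(X,E)^\perp$. With this normalisation the operator of $K$ vanishes on $\operatorname{im}\dbar\oplus\HH^1=\Omega^{0,1}(X,E)$, so $K=0$. I would state this normalisation explicitly, since without it $P$ is only determined up to adding elements of $\Omega^{0,0}(X,E)\otimes\HH^1(X,E)^\ast$, written via the pairing as sections of $E\boxtimes E$.
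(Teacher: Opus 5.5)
Your existence argument follows the paper's proof. You take the Green's operator of $\dbar^*\dbar$ on $\Omega^{0,0}$, form $S=G\dbar^*$, transport its kernel via $E\simeq E^\vee\otimes\wX$, and use a local Cauchy parametrix at the diagonal. The paper only cites Fay for that last step. Your two-variable regularity argument fills it in correctly: it combines the $\dbar_{z_2}$-equation with its $\pi_{\HH^0}$ correction and the $\dbar_{z_1}$-equation with its $\pi_{\HH^1}$ correction. You are also right that $e$ must be a section in $\Omega^{0,0}(X,E)$, not a $(0,1)$-form.

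Where you genuinely differ is uniqueness, and there your version is the correct one. The paper claims that two solutions of \eqref{eqPgreen} differ by an element of $\HH^0(X,E)\otimes\HH^0(X,E)$, and that the pole condition then kills it. Neither step is valid. First, \eqref{eqPgreen} only forces the difference to be holomorphic in $z_2$, so it ranges over $\Omega^{0,0}(X,E)\otimes H^0(X,E)\simeq\Omega^{0,0}(X,E)\otimes\HH^1(X,E)^*$, exactly as you say. Second, such differences are smooth, so the prescribed pole imposes nothing on them. Hence for $n\ge1$ uniqueness as stated is false. It holds only with your normalization $S|_{\HH^1}=0$, and this is automatic when $n=0$, which is the case of \S\ref{secapplications}.

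One correction to your Step 4: this normalization is \emph{not} equivalent to ``the image lies in $\HH^0(X,E)^\perp$''. Among operators with $S'\dbar=1-\pi_{\HH^0}$, the image condition is strictly weaker. Indeed, $S+A\circ\pi_{\HH^1}$ satisfies it for any $A:\HH^1(X,E)\to\HH^0(X,E)^\perp$. The image condition becomes sufficient only when combined with the super-antisymmetry \eqref{eqPantisym}. That confines the ambiguity to $H^0\otimes H^0$, whose operators have image in $\HH^0\cap(\HH^0)^\perp=0$.

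Two small bookkeeping points for Step 3:
\begin{itemize}
\item No $dz_2^{-1}$ belongs in $P_0$. Pairing $\gamma^i$ or $\beta_i$ in the $z_2$-slot against an $E$-valued $(0,1)$-form already produces a $(1,1)$-form.
\item Your Cauchy--Pompeiu check will put a constant $\pm\frac{1}{2\pi i}$ in front of $\frac{\id\cdot dz}{z_1-z_2}$ for the unnormalized $\int_X$. So \eqref{eqPexpansion} as printed holds only up to that normalization.
\end{itemize}
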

\begin{proof}
This is the standard existence and uniqueness statement for the Green's operator of an elliptic operator on a compact manifold, applied to $\dbar$ acting on sections of the Hermitian holomorphic bundle $W$ (equivalently $E$, up to the harmless parity shift): $\dbar^*\dbar+\dbar\dbar^*$ is a self-adjoint elliptic (Laplace-type) operator on the compact manifold $X$ with values in $W$, hence has a well-defined resolvent/Green's operator on the orthogonal complement of its (finite-dimensional) kernel $\HH^0(X,W)\oplus\HH^1(X,W)$, by the general theory of elliptic operators on compact manifolds (\cite[Ch. 0.6]{GriffithsHarris78}, \cite[\S IV.5]{Wells08}). Restricting to the relevant bidegree and using that $\dbar$ is injective on $(\HH^0)^\perp$ (no cohomology to obstruct it there) gives \eqref{eqPgreen}. Smoothness away from $\Delta$ and the stated singularity along $\Delta$ are the standard parametrix construction for a first-order elliptic operator on a Riemann surface (\cite[Ch. 2]{Fay92}). Uniqueness is immediate from \eqref{eqPgreen}, since two kernels satisfying it for all $e$ would differ by an element of $\HH^0(X,E)$ in each variable, but a kernel valued in $\HH^0(X,E)\otimes\HH^0(X,E)$ with a pole along $\Delta$ of the stated singular type does not exist unless it vanishes (compare the argument of \cite[\S2]{Fay92}).
\end{proof}

\begin{remark}  
\label{remmetricdep}
Let us discuss dependence on the metric. 
The kernel $P$ depends on the choice of Hermitian metric $h$ on $F$ only through the orthogonal projection $\pi_{\HH^0}$, i.e., through the choice of $L^2$-complement to the (metric-independent) subspace $\HH^0(X,E)\subset\Omega^{0,0}(X,E)$. Changing $h$ changes $P$ by a smooth, globally defined correction valued in $\HH^0(X,E)\boxtimes\HH^0(X,E)$, i.e., by an element with no singularity along $\Delta$. This is the precise sense in which the singular part of $P$ (Definition \ref{defPsingQreg} below) is metric-independent, while the regular part $\Qreg$ is not; see Proposition \ref{propmetricindep} for the resulting statement about $\Trch$.
\end{remark}

Since $E$ is purely odd and $\langle-,-\rangle$ symmetric, we have 
the super-antisymmetry relation 
\begin{equation}
\label{eqPantisym}
P(z_1,z_2)=-\sigma_{1,2}P(z_2,z_1).
\end{equation}

\subsection{Local expansion}
\begin{definition}
\label{defPsingQreg}
For $x\in X$, choose an analytic disc $U\ni x$ with coordinate $z$. Split
\begin{equation}\label{eqPexpansion}
P|_{U\times U}=\Psing+\Qreg,\qquad \Psing=\frac{\id\cdot dz}{z_1-z_2},\qquad \Qreg\in\Omega^{0,0}(U\times U,E\boxtimes E),
\end{equation}
using the canonical isomorphism $E\simeq E^\vee\otimes\wX$ to view $\id\cdot dz\in E\boxtimes E^\vee\otimes\wX\simeq E\boxtimes E$. By Theorem \ref{thmszegoexists}, $\Qreg$ is smooth (in particular $C^\infty$, though generally not real-analytic) on all of $U\times U$, including the diagonal. Consequently, for every $N$, Taylor's theorem with remainder gives a  finite expansion on the diagonal,
\begin{equation}\label{eqQregexpansion}
\Qreg(z_1,z_2)=a_0(z_1;E)+a_1(z_1;E)(z_2-z_1)+O(|z_1-z_2|^2),\quad a_0(z;E)\in\Omega^{0,0}(U,\End(E)),
\end{equation}
with smooth (not only bounded) remainder. We use only the first two Taylor coefficients $a_0$, $a_1$ below (Theorems \ref{thmcurrentTorsion} and \ref{thmemTorsion}),  thus no question of convergence of an infinite Taylor series, which would require real-analyticity of $\Qreg$, 
  arises at any point. Smoothness of $\Qreg$, already established, is all that \eqref{eqQregexpansion} uses.
\end{definition}

\subsection{Pfaffian contraction operators}
Let $K\in\Omega^{0,\bullet}(X^2,E\boxtimes E(*\Delta))^{\sigma_2}$ be a (possibly singular)  symmetric bisection, formally written $K=k_1\boxtimes k_2$. For a homogeneous $\alpha=\psi_1\wedge\cdots\wedge\psi_{2m}\in\Omega^{0,\bullet}(X^n,\bigwedge^{2m}L(*\Delta))$, define the contraction operator via the Pfaffian sum
\begin{equation}\label{eqpfaffiandef}
\pf(K)(\alpha):=\frac{1}{2^mm!}\sum_{\sigma\in S_{2m}}\mathrm{sgn}(\sigma)\prod_{r=1}^m\partial_K(\psi_{\sigma(2r-1)},\psi_{\sigma(2r)}),
\end{equation}
where $\partial_K(\psi_i,\psi_j)$ denotes the pairwise contraction of $\psi_i,\psi_j$ against $K$ (Definition analogous to \cite[\S5.1]{Gui23} but using $K$ in place of the bosonic bisection there, with a sign for each transposition of odd factors moved past one another to bring the contracted pair together). We set $e^K:=\sum_m\frac1{m!}\pf(K)|_{2m\text{ legs}}$, and, precisely as in \S\ref{ssecchiralcliff}, use $e^{\Psing}$, $e^{\Qreg}$ for the operators built from the singular, resp.\ regular, part of $P$. The presence of a Pfaffian (as opposed to the permanent used for the bosonic contractions of \cite[\S5]{Gui23}) is forced by the sign $\mathrm{sgn}(\sigma)$. Reordering an odd pair of legs contributes $-1$,  thus only the alternating (Pfaffian) combination of pairings is well defined on the exterior algebra $\bigwedge^\bullet L$, exactly as the permanent, the symmetric combination, is the one compatible with $\Sym L$ in the bosonic case.

\begin{lemma}
\label{lemcontractionprops}
$e^{\Psing}$ has no self-loops (vanishes when the two legs of any contraction are equal, i.e., $\Psing$ vanishes on the diagonal $\Delta_{ii}$ for a single tensor factor), while $e^{\Qreg}$ has self-loops (contractions at coincident points, computed via $a_0(z;E)$). Both are compatible with the $\sD_{X^n}$-module structure, i.e., commute with the connection.
\end{lemma}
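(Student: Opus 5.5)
The plan is to treat the two assertions of Lemma \ref{lemcontractionprops} separately, working locally on a product of analytic discs $U^n$ with coordinate $z$. The Pfaffian operator \eqref{eqpfaffiandef} is a signed sum of products of pairwise contractions $\partial_K(\psi_i,\psi_j)$. So the lemma reduces to a statement about a single contraction $\partial_K(\psi_i,\psi_j)$ for $K=\Psing$ and $K=\Qreg$. The signs $\mathrm{sgn}(\sigma)$ and the Koszul signs play no role in either assertion, since they are constants and do not involve $z$.

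\textbf{Self-loops.} Here I interpret a self-loop as a contraction whose two legs $\psi_i,\psi_j$ sit in the same tensor factor $X_i$. Such a contraction is computed by restricting $K$ to the diagonal $\Delta_{ii}\subset X_i\times X_i$. For $\Psing$ this restriction must be understood as the regularized one. We subtract nothing further, and by the super-antisymmetry \eqref{eqPantisym} the kernel $\Psing=\id\cdot dz/(z_1-z_2)$ is odd under $z_1\leftrightarrow z_2$ combined with $\sigma_{1,2}$. Its symmetric (regularized, principal-value) diagonal value is therefore the vanishing constant term in the Laurent expansion of $1/(z_1-z_2)$ at $z_1=z_2$. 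Hence every self-contraction by $\Psing$ vanishes, and $e^{\Psing}$ only produces edges between distinct vertices.

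For $\Qreg$, the expansion \eqref{eqQregexpansion} shows that $\Qreg$ is smooth up to the diagonal. Its restriction is therefore honest and equals $a_0(z;E)$, which is in general nonzero. This gives the self-loops of $e^{\Qreg}$, weighted by $a_0$. The main subtlety is fixing the convention for the diagonal value of $\Psing$. I would justify it by matching with the vacuum-module normal ordering of \S\ref{ssecchiralcliff}: in \eqref{eqlocalclifford} the coincident-point product of two generators is already normally ordered, so its singular self-contraction must be zero.

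\textbf{$\sD$-compatibility.} Contraction against $K$ acts on a leg $\psi=e\otimes\partial_z^{k}$ by pairing $e$ with one factor of $K$ and applying $\partial_z^k$ to the corresponding variable of $K$. A right $\sD_{X^n}$-action, or equivalently a derivative in the $i$-th variable, applied before contraction therefore becomes a derivative of the kernel after contraction. Two properties make this consistent: $K$ is a (meromorphic or smooth) section, and the pairing $\langle-,-\rangle$ is $\sO_X$-bilinear with values in $\wX$. Commutation with the connection then follows from the Leibniz rule $\partial_{z_i}\langle k_1,\psi\rangle=\langle\partial_{z_i}k_1,\psi\rangle+\langle k_1,\partial_{z_i}\psi\rangle$, exactly as in \cite[\S5.1]{Gui23}. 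For $\Psing$, the derivatives $\partial_{z_1}$ and $\partial_{z_2}$ preserve the space of kernels with poles along $\Delta$, so the operator is well defined on the $(*\Delta)$-extension.

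Finally, I would pass from one contraction to $\pf(K)$ and to $e^K$ by linearity. Each summand is a composite of pairwise contractions, each of which commutes with $\partial$. The sign bookkeeping is unaffected because differential operators are even.
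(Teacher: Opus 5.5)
Your $\sD$-compatibility argument uses the same mechanism as the bosonic verification that the paper's proof simply cites. The contraction is assembled from the $\sO$-bilinear pairing $E\otimes E_\sD\to\wX\otimes\DX\to\wX$, and differential operators are even, so the signs $\mathrm{sgn}(\sigma)$ in \eqref{eqpfaffiandef} are unaffected.

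The self-loop part takes a different route. In the setup the paper defers to, the absence of self-loops for $\Psing$ is essentially definitional. A kernel with a pole along $\Delta$ has no restriction to $\Delta$, so $\partial_{\Psing}$ is a sum over pairs of distinct tensor factors only, and diagonal terms are added only for the smooth $\Qreg$. You instead give $\Psing$ a regularized diagonal value and show it vanishes. This has the merit of explaining why the regularized self-loop of the full $P$ is exactly $\Qreg|_{\Delta}$, as \eqref{eqnormalorderinglocal} requires.

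However, your justification for the vanishing does not work.
\begin{itemize}
\item Any symmetry of $\Psing$ under the combined swap ($z_1\leftrightarrow z_2$ together with $\sigma_{1,2}$) is shared by $\Qreg=P-\Psing$ whenever $P$ has it. On the diagonal, such a symmetry only says which $\sigma_{1,2}$-eigenspace of $E\otimes E$ the value lies in. The same reasoning would therefore force $a_0=0$, contradicting the second half of the lemma.
\item The symmetrized or principal-value reading also fails once the legs carry derivatives. We have $\partial_{z_1}^k\partial_{z_2}^l(z_1-z_2)^{-1}=(-1)^k(k+l)!\,(z_1-z_2)^{-k-l-1}$, and its symmetrization under $z_1\leftrightarrow z_2$ is still singular when $k+l$ is odd.
\end{itemize}
The correct reason is simpler. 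Every coefficient produced by $\Psing$ and its derivatives is a pure negative power of $z_1-z_2$, since the Casimir $\id$ is constant in the chosen frame. Its constant term in the $z$-Laurent expansion therefore vanishes. Either take that constant term as the definition of the regularized self-loop, or adopt the definitional exclusion directly.

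Two smaller points.
\begin{itemize}
\item Your regularization is tied to the coordinate and frame used to define $\Psing$. In another coordinate $w$, the kernel $dw_2/(w_1-w_2)$ differs from $dz_2/(z_1-z_2)$ by a regular term whose diagonal value is a nonzero multiple of $(w''/w')\,dz$. A frame change similarly produces a $\partial\log\sigma$-type term. This is exactly the anomaly corrected in Propositions \ref{propJnu} and \ref{propTmu}, so ``no self-loops'' must be read relative to the pair $(z,\text{frame})$ that defines $\tau^z_U$.
\item For $\sD$-compatibility of the self-loops of $e^{\Qreg}$, your Leibniz-rule argument needs one more ingredient: the chain rule $\partial_z\bigl(f|_{\Delta}\bigr)=\bigl((\partial_{z_1}+\partial_{z_2})f\bigr)|_{\Delta}$. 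This matches the derivative at a vertex distributing over both legs contracted there.
\end{itemize}
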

\begin{proof}
Identical to the bosonic verification of \cite[\S5.2]{Gui23}. The argument there uses only that $K$ is a bisection compatible with $\sD$-module structure and does not use commutativity of $\Sym L$; it transports to $\bigwedge^\bullet L$ once every transposition of tensor factors carries the Koszul sign of \eqref{eqkoszul}, which is exactly what \eqref{eqpfaffiandef} encodes through $\mathrm{sgn}(\sigma)$.
\end{proof}

\subsection{The fermionic Wick theorem}
\begin{theorem}
\label{thmfermwick}
Let $z_1$, $z_2$ be two copies of a local coordinate $z$ on $U$. For local sections $v_1$, $v_2\in\CE|_U$ and a test form $\eta$, 
\begin{equation}
\label{eqfermwick}
\tau^z_U\,\mu_{\CE}(\eta\cdot v_1dz_1\boxtimes v_2dz_2)=\mu_{\wedge}\Bigl(\eta\cdot e^{\Psing}\bigl(\tau^z_U(v_1)dz_1\boxtimes\tau^z_U(v_2)dz_2\bigr)\Bigr),
\end{equation}
where $\tau^z_U:\CE|_U\xrightarrow{\sim}\bigwedge^\bullet L|_U$ is the local trivialization of Theorem \ref{thmchiralPBW} and $\mu_{\wedge}$ is the exterior chiral product induced by wedge multiplication.
\end{theorem}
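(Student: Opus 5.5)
The plan is to reduce \eqref{eqfermwick} to a statement about the operators in \eqref{eqlocalclifford}. Both sides are $\sO_{U^2}$-linear in $\eta$ and compatible with the $\sD_{U^2}$-module structure: for the left side this follows from the construction \eqref{eqchiralenvprod}, and for the right side from Lemma \ref{lemcontractionprops}. It therefore suffices to check the identity on PBW monomials. Take
\[
v_1=X_{1}\cdots X_{k}|0\rangle,\qquad v_2=Y_{1}\cdots Y_{l}|0\rangle,
\]
with $X_p=n_p!\,e_{i_p}/(t-z_1)^{n_p+1}$ and $Y_q=m_q!\,e_{j_q}/(t-z_2)^{m_q+1}$. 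By Theorem \ref{thmchiralPBW}, $\tau^z_U$ sends these to the corresponding wedge monomials.

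\textbf{Computing the left side.} I will use \eqref{eqchiralenvprod}, in which $\iota^{-1}\circ c$ is the statement that over $U^2\setminus\Delta$ the product $v_1\boxtimes v_2$ is represented by the single vacuum-module element $X_1\cdots X_kY_1\cdots Y_l|0\rangle$ in $\sU_{U^2}(\Lflat)$. The next step is to bring this element into the PBW normal form of $\sU_{U}$ at the diagonal, i.e.\ re-expanded around one point. The only relations available are the anticommutators computed in Appendix \ref{appPBW} (cf.\ Example \ref{exrankonelocal}). For an $X_p$ and a $Y_q$ these give the central term
\[
\{X_p,Y_q\}=\partial_{z_1}^{n_p}\partial_{z_2}^{m_q}\frac{\langle e_{i_p},e_{j_q}\rangle}{z_1-z_2},
\]
which is exactly $\partial_{\Psing}$ applied to the two legs. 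Anticommutators of two factors attached to the same point are chirally trivial, as shown in the proof of Theorem \ref{thmchiralPBW}; this is the content of the no-self-loop statement in Lemma \ref{lemcontractionprops}.

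\textbf{Induction on $k+l$.} Rearranging $X_1\cdots X_kY_1\cdots Y_l|0\rangle$ into the normal form is a fermionic Wick expansion. Each rearrangement either keeps the monomial, with a Koszul sign, or replaces an $(X_p,Y_q)$ pair by the scalar $\partial_{\Psing}(X_p,Y_q)$, with the sign of the transpositions needed to make the pair adjacent. I will induct on $k+l$: move $Y_1$ leftward past $X_k,\dots,X_1$. This produces the term in which $Y_1$ is uncontracted, plus $\sum_p(-1)^{k-p}\partial_{\Psing}(X_p,Y_1)$ times the shorter monomial. Applying the induction hypothesis to each shorter monomial gives the sum over all partial matchings between $\{X_p\}$ and $\{Y_q\}$. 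The sign of each matching is $\mathrm{sgn}$ of the permutation bringing contracted pairs together. The normalization $1/(2^mm!)$ in \eqref{eqpfaffiandef} removes the overcounting from ordering the pairs and the entries within each pair. Summing over numbers of contractions then reproduces $e^{\Psing}$, and the uncontracted legs are wedged together, which is $\mu_\wedge$.

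\textbf{Main obstacle.} The real difficulty is sign bookkeeping. Three conventions have to match: the sign conventions for $\partial_K$ in \eqref{eqpfaffiandef} with the odd-$\Pi$ convention of \S\ref{secconventions}, the $\sigma_{1,2}$ in the super-antisymmetry \eqref{eqPantisym}, and the $\mu_\omega$ normalization converting $(z_1-z_2)^{-N}$ to the diagonal. To pin these down I will first verify the case $k=l=1$ directly against Example \ref{exrankonelocal}, then check $k=2$, $l=1$ to fix the sign $(-1)^{k-p}$, and only then run the induction.
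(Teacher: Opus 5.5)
\textbf{There is a genuine gap at the step you call computing the left side.} You assume that $\iota^{-1}\circ c$ represents $v_1\boxtimes v_2$ by the single element $X_1\cdots X_kY_1\cdots Y_l|0\rangle\in\sU_{U^2}$. That premise is false, and the discrepancy it hides is exactly where the contractions come from.

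Already for $k=l=1$ it fails. Take $X=e/(t-z_1)$ and $Y=e'/(t-z_2)$, and apply $\iota$:
\begin{itemize}
\item $Y$ is regular at $z_1$, so it kills $|0\rangle_{z_1}$.
\item $X$ acts either on the $z_1$-factor or on the $z_2$-factor. On the $z_2$-factor it is regular, so it must be anticommuted through $Y$ until it annihilates $|0\rangle_{z_2}$. This leaves the central term, evaluated by the residue at $z_2$.
\end{itemize}
Hence $\iota(XY|0\rangle)=c\bigl(X|0\rangle\boxtimes Y|0\rangle\bigr)-\frac{\langle e,e'\rangle}{z_1-z_2}\,c\bigl(|0\rangle\boxtimes|0\rangle\bigr)$, up to the normalization of the central term. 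So $\iota^{-1}c(v_1\boxtimes v_2)$ has a pole along $\Delta$.

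Your version cannot be right for a structural reason. If $\iota^{-1}c(v_1\boxtimes v_2)$ were the monomial, it would be a regular section of $\sU_{U^2}$ across the diagonal. Then \eqref{eqchiralenvprod} with $N=0$ would give $\mu(f\,v_1dz_1\boxtimes v_2dz_2)=0$ for every regular $f$, since $\mu_\omega$ kills regular forms. The Lie$^*$ bracket, and with it every contraction, would vanish.

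Your substitute mechanism does not repair this.
\begin{itemize}
\item Inside $\sU_{U^2}$, the sections $X_p,Y_q$ of $L^\natural_{U^2}$ and their central anticommutator are regular over all of $U^2$. So the relation $\{X_p,Y_q\}=\partial_{z_1}^{n_p}\partial_{z_2}^{m_q}\langle e_{i_p},e_{j_q}\rangle/(z_1-z_2)$ cannot hold there. That singular scalar appears only when the central element acts on the vacuum module at one of the two points, i.e.\ after applying $\iota$.
\item The ``PBW normal form at the diagonal'' has no content. On $\Delta$ all factors sit at one point, where, as you note yourself, they anticommute with no correction.
\end{itemize}

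\textbf{What is missing is the paper's key identity} \eqref{eqiotaPfaffian}, $\iota(v_1\otimes v_2)=c\bigl(e^{-\Psing}v_1\boxtimes v_2\bigr)$. The proof goes as follows.
\begin{enumerate}
\item Expand $\iota(X_1\cdots X_kY_1\cdots Y_l|0\rangle)$ as a sum over subsets $I$ of the $X_p$ that land in the $z_2$ vacuum module.
\item For those $X_p$, run your induction: anticommute each one rightward through $Y_1,\dots,Y_l$ until it hits $|0\rangle_{z_2}$. Your Koszul bookkeeping, the $(-1)^{k-p}$-type signs, and the sum over partial matchings with Pfaffian normalization carry over essentially unchanged in this setting.
\item Invert to get $\iota^{-1}c=e^{\Psing}$.
\item Choose $N\gg0$ so that $(z_1-z_2)^Ne^{\Psing}(v_1\otimes v_2)$ is regular. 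Then \eqref{eqchiralenvprod} gives $\mu_\omega\bigl(\eta/(z_1-z_2)^N\bigr)\cdot\bigl((z_1-z_2)^Ne^{\Psing}(v_1\otimes v_2)\bigr)$.
\item After applying $\tau^z_U$, this is by definition $\mu_\wedge\bigl(\eta\cdot e^{\Psing}(\cdots)\bigr)$.
\end{enumerate}
So the $\mu_\omega$/$(z_1-z_2)^N$ step is not a sign convention to be fixed by checking small cases. It is the final structural step of the argument.
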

\begin{proof}
See Appendix \ref{appwick}. The proof follows the strategy of \cite[App. 7.3]{Gui23} which we reproduce and adapt. One computes both sides of \eqref{eqfermwick} explicitly in the local vacuum-module presentation \eqref{eqlocalclifford}, using the identity $\iota(v_1\otimes v_2)=c(e^{-\Psing^\otimes}v_1\boxtimes v_2)$ analogous to \cite[eq. (7.1)]{Gui23}. Every step there that involves reordering two local sections $\frac{a_i}{(t-z_1)^{k_i+1}}$ and $\frac{b_j}{(t-z_1)^{l_j+1}}$ picks up, in our setting, an extra sign from the Koszul rule, because $a_i$, $b_j$ are now sections of the odd bundle $E$. Tracking this sign converts Gui's binomial-coefficient identity into the alternating (Pfaffian-type) identity
\[
\frac1{k_i!l_j!}\partial_{z_1}^{k_i}\partial_{z_2}^{l_j}\frac{1}{z_1-z_2}=(-1)^{k_i}\frac{(k_i+l_j)!}{k_i!l_j!}\frac1{(z_1-z_2)^{k_i+l_j+1}}
\]
matching, with the correct sign, the contraction of two odd generators. This is worked out in full in Appendix \ref{appwick}.
\end{proof}

\begin{theorem}[{cf.\ \cite[Thm. 5.7]{Gui23}}]
\label{thmwichintertwine}
Set $\sW^{\tau^z_U}:=e^{\Psing+\Qreg}$, an operator
\[
\sW^{\tau^z_U}:\Omega^{0,\bullet}\bigl(U^n,(\bigwedge{}^{\!\bullet}L)^{\boxtimes n}(*\Delta)\bigr)\to\Omega^{0,\bullet}\bigl(U^n,(\bigwedge{}^{\!\bullet}L)^{\boxtimes n}(*\Delta)\bigr).
\]
This map is compatible with the $\sD$-module structure and intertwines the chiral operation.  For $v$ a section of $\CE^{\boxtimes2}(*\Delta)$ over $X^2$,
\[
\sW^{\tau^z_U}\bigl(\tau^z_U\mu_{\CE}(v)\bigr)=\mu_{\wedge}\Bigl(\sW^{\tau^z_U}\bigl(\tau_U^{z\boxtimes2}(v)\bigr)\Bigr).
\]
\end{theorem}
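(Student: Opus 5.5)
The plan is to reduce the intertwining statement to the fermionic Wick theorem (Theorem \ref{thmfermwick}) together with a combinatorial identity for exponentials of Pfaffian contraction operators, following the structure of \cite[Thm.~5.7]{Gui23}. First I would establish the $\sD$-compatibility. By Lemma \ref{lemcontractionprops}, both $e^{\Psing}$ and $e^{\Qreg}$ commute with the connection on $U^n$. Since $\Psing$ and $\Qreg$ are even bisections (products of two odd legs), the contraction operators $\partial_{\Psing}$ and $\partial_{\Qreg}$ are even and commute with one another. Hence $e^{\Psing+\Qreg}=e^{\Psing}e^{\Qreg}$, and this composite is $\sD$-compatible as well.

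For the intertwining, I would apply Theorem \ref{thmfermwick} to rewrite the left-hand side as
\[
\sW^{\tau^z_U}\mu_{\wedge}\bigl(e^{\Psing}(\tau^z_U v_1\boxtimes\tau^z_U v_2)\bigr).
\]
The key combinatorial step is a Leibniz-type rule for exponentials of contractions with respect to $\mu_\wedge$. Let $K=\Psing+\Qreg$ act on $(\bigwedge^\bullet L)^{\boxtimes 2}$. Decompose $\partial_K=\partial_K^{(11)}+\partial_K^{(22)}+\partial_K^{(12)}$ according to whether both legs lie in the first factor, both in the second, or one in each. The claim is that
\[
e^{K}\circ\mu_\wedge=\mu_\wedge\circ e^{K^{(11)}+K^{(22)}+K^{(12)}}.
\]
These three pieces are even and mutually commuting, so the exponential factorizes. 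Merging the two points via $\mu_\wedge$ turns an inter-factor contraction into an ordinary contraction on the product, because $\mu_\wedge$ is wedge multiplication followed by restriction to the diagonal. The Pfaffian sign $\mathrm{sgn}(\sigma)$ in \eqref{eqpfaffiandef} is exactly the Koszul sign needed to move odd legs across the boundary between factors. I would check this identity on monomials $\psi_1\wedge\cdots\wedge\psi_{2m}$ by induction on $m$, using the expansion of $\pf$ along its first row.

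Combining these, the left-hand side becomes
\[
\mu_\wedge\bigl(e^{K^{(11)}+K^{(22)}+K^{(12)}}e^{\Psing^{(12)}}(\cdots)\bigr).
\]
The right-hand side is $\mu_\wedge\bigl(e^{K}(\tau v_1\boxtimes\tau v_2)\bigr)$ on $U^2$, where the cross-term of $K$ on $U^2$ is $K^{(12)}$. The whole point is then to account for the extra factor $e^{\Psing^{(12)}}$: I expect that the correct bookkeeping (as in \cite{Gui23}) applies $\sW$ on $U^2$ only through its self-contractions, while the singular cross contraction is supplied by the chiral product. So the identity to verify is that $e^{\Psing^{(12)}}$ produced by Theorem \ref{thmfermwick} is precisely the $\Psing$-part of the $(12)$-contraction in $e^K$ on $U^2$. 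The $\Qreg^{(12)}$-part is regular and commutes with the residue/pole-cancelling step in \eqref{eqchiralenvprod}, so it passes through $\mu_\wedge$ unchanged.

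The main obstacle is the sign and normalization bookkeeping in the Leibniz identity for $e^{\pf}$, in particular matching the factor $\frac1{2^m m!}$ against the counting of mixed pairings and the Koszul signs from odd legs crossing. I would handle it first in the rank-one local model of Example \ref{exrankonelocal}, for two generators at each point, and then induct. A secondary issue is that $\Qreg$ has self-loops (Lemma \ref{lemcontractionprops}). These are diagonal contractions within a single factor; they commute with restriction to $\Delta$ because $\Qreg$ is smooth, so they are harmless.
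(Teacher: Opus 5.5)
There is a genuine error in your key step. Your $\sD$-compatibility argument and your opening use of Theorem \ref{thmfermwick} are fine, but the splitting identity $e^{K}\circ\mu_\wedge=\mu_\wedge\circ e^{K^{(11)}+K^{(22)}+K^{(12)}}$ is false with $K=\Psing+\Qreg$.

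On the left, $e^{K}$ acts on a section pushed forward to the diagonal, i.e.\ at a single point. There $\Psing$ contributes nothing, because it has no self-loops (Lemma \ref{lemcontractionprops}). So the left side is just $e^{\Qreg}\circ\mu_\wedge$. On the right, $\Psing^{(11)}=\Psing^{(22)}=0$ for the same reason, but $\Psing^{(12)}$ is a nonzero cross contraction. Granting the $\Qreg$-part of your identity, your claim would therefore force $\mu_\wedge\circ e^{\Psing^{(12)}}=\mu_\wedge$. That contradicts Theorem \ref{thmfermwick}: the gap between these two maps is exactly the gap between the Clifford product $\mu_{\CE}$ and the exterior product. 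Already on $\beta\boxtimes\gamma$ it is a nonzero multiple of $\mu_\omega\bigl(dz_1\boxtimes dz_2/(z_1-z_2)\bigr)$.

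This is where your ``extra factor'' $e^{\Psing^{(12)}}$ comes from, and your proposed resolution does not work. In the statement, $\mu_\wedge$ is the exterior chiral product and supplies no contraction. So $\sW^{\tau^z_U}$ on $U^2$ must contain the cross terms $\Psing^{(12)}$ and $\Qreg^{(12)}$; if it acted only through self-contractions, the right-hand side would lose both and could not equal the left. Your two sentences (``only self-contractions on $U^2$'' versus ``the $\Psing$-part of the $(12)$-contraction in $e^K$ on $U^2$'') also contradict each other.

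The fix is to state the splitting for the regular part only:
\[
e^{\Qreg}\circ\mu_\wedge=\mu_\wedge\circ e^{\Qreg^{(11)}+\Qreg^{(22)}+\Qreg^{(12)}}.
\]
This holds because $\Qreg$ is smooth across $\Delta$ and its contraction is $\sD$-compatible. Hence it commutes with multiplication by $(z_1-z_2)^{-N}$ and with the passage to the diagonal in \eqref{eqchiralenvprod}. The Koszul signs enter only in the single-contraction version, when one leg crosses the block boundary. The exponential version then follows by iterating it, since the three pieces are even and commute. So no Pfaffian row-expansion induction or $\frac{1}{2^m m!}$ bookkeeping is needed.

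With $w=\tau_U^{z\boxtimes2}(v)$ one then gets
\[
\sW^{\tau^z_U}\bigl(\tau^z_U\mu_{\CE}(v)\bigr)=e^{\Qreg}\mu_\wedge\bigl(e^{\Psing^{(12)}}w\bigr)=\mu_\wedge\bigl(e^{\Qreg^{(11)}+\Qreg^{(22)}+\Qreg^{(12)}}e^{\Psing^{(12)}}w\bigr)=\mu_\wedge\bigl(\sW^{\tau^z_U}w\bigr).
\]
The last step uses that on $U^2$ one has $\sW^{\tau^z_U}=e^{\Psing^{(12)}+\Qreg^{(11)}+\Qreg^{(22)}+\Qreg^{(12)}}$. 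The single singular cross contraction produced by the Wick theorem on the left is exactly the one in $\sW^{\tau^z_U}$ on the right, and nothing is left over. This is the Gui-style argument the paper invokes, with Theorem \ref{thmfermwick} replacing the bosonic Wick theorem.
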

\begin{proof}
Identical in structure to \cite[Thm. 5.7]{Gui23}, one performs the same computation,
 carried out with $\Sym L$ replaced by $\bigwedge^\bullet L$ throughout and every transposition of odd sections weighted by the Koszul sign, using Theorem \ref{thmfermwick} in place of \cite[Thm. 5.6]{Gui23} in the last step. We omit the routine but lengthy verification, which is line-by-line identical to \cite[pp. 28-29]{Gui23} once $\boxtimes$ is read as the super tensor product.
\end{proof}

\begin{lemma}
\label{lempresentation}
For any $v\in\Omega^{0,\bullet}(X,\CE)$ there is $\tilde v\in\Omega^{0,\bullet}(X^n,\Lflat{}^{\boxtimes n}(*\Delta))$ with $\vec\mu_{\CE}(\tilde v)=v$ (iterated chiral product).
\end{lemma}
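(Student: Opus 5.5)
The plan is to reduce to the local structure of $\CE$ given by the chiral PBW theorem (Theorem \ref{thmchiralPBW}) and then globalize with a partition of unity. Fix a finite cover of $X$ by coordinate discs $U_\alpha$ with local coordinates $z$, and a smooth partition of unity $\{\rho_\alpha\}$ subordinate to it. Writing $v=\sum_\alpha\rho_\alpha v$, it suffices to treat a form $v$ supported in a single disc $U$. Since $\vec\mu_{\CE}$ is $\Omega^{0,\bullet}$-linear (the chiral product is $\sO$-bilinear in the sense of $\sD$-modules, and we extend it to Dolbeault forms via $-\otimes\sQ_X$), the preimages may be summed. The number $n$ of tensor factors can be made uniform by padding with the unit: we insert factors of $\wX\subset\Lflat$, using the identification $1=1^\flat$ of Definition \ref{defLflat} and the unit axiom $\mu(1\boxtimes v)=v$.

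On $U$, the local form of Theorem \ref{thmchiralPBW}, display \eqref{eqlocalclifford}, shows that $v$ is a finite $\Omega^{0,\bullet}(U)$-linear combination of vacuum-module monomials
\[
g\cdot\frac{n_1!\,e_{i_1}}{(t-z)^{n_1+1}}\cdots\frac{n_k!\,e_{i_k}}{(t-z)^{n_k+1}}|0\rangle .
\]
Finiteness holds because $v$ has compact support and the PBW filtration is exhaustive; if necessary we work on a relatively compact disc where only finitely many filtration pieces are involved. The key step is the following: each such monomial equals an iterated chiral product $\vec\mu_{\CE}$ of the elementary sections $e_{i_1}\partial_z^{n_1},\dots,e_{i_k}\partial_z^{n_k}\in L\subset\Lflat$ placed at separate points $z_1,\dots,z_k$, multiplied by an explicit meromorphic kernel with poles along $\Delta$. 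This is the standard statement that the chiral envelope is generated by $\Lflat$. To prove it we proceed by induction on $k$, using the chiral product formula \eqref{eqchiralenvprod}. The product $\mu\bigl(\frac{e\,dz_1}{z_1-z_2}\boxtimes w\,dz_2\bigr)$ with $w$ a monomial of length $k-1$ produces $\frac{e}{t-z}\cdot w$ plus lower-filtration corrections. By the fermionic Wick theorem (Theorem \ref{thmfermwick}) these corrections are Pfaffian contractions against $\Psing$, and they therefore land in $(\CE)_{k-1}$. Higher poles $(t-z)^{-n-1}$ are obtained by taking $\frac{n!}{(z_1-z_2)^{n+1}}$ in place of $\frac{1}{z_1-z_2}$. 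By induction the correction terms already have preimages, so subtracting them gives a preimage of the monomial itself. Koszul signs enter only as overall signs, since each step moves one odd factor to the front.

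The main obstacle is making the inductive correction rigorous and keeping it inside the claimed space $\Omega^{0,\bullet}(X^n,\Lflat{}^{\boxtimes n}(*\Delta))$ after multiplication by smooth forms and cutoffs. Here the product of a meromorphic kernel in $(z_1,\dots,z_n)$ with $\rho(z_n)g(z_n)$ must remain a smooth form with poles only along $\Delta$. We handle this by putting all smooth coefficients on a single variable and using the $\sD$-linearity of $\vec\mu$, which implies $\vec\mu(f(z_n)\tilde v)=f\cdot\vec\mu(\tilde v)$ after restriction to the diagonal. If the $\sO$-linearity pulled back along the diagonal causes sign or ordering issues, a fallback is to argue abstractly. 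Surjectivity of $\vec\mu:j_*j^*\Lflat{}^{\boxtimes n}\to\Delta_*\CE$ as $\sD$-modules for large $n$ is \cite[\S3.7]{BD04}. Tensoring with the flat (over $\sO$) Dolbeault complex $\sQ$ and taking global sections on the Stein discs preserves surjectivity, and the partition of unity then glues the local preimages.
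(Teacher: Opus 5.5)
Your proposal is correct and follows the same route as the paper, whose proof is just Gui's Lemma 5.5: local presentations on coordinate discs glued by a partition of unity. Your PBW/generation argument, with the Beilinson--Drinfeld surjectivity as fallback, supplies the local existence step that the paper takes for granted. One detail needs fixing, and one is a minor imprecision.

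\begin{itemize}
\item \emph{Cutoffs.} Putting the smooth coefficient $\rho_\alpha g$ on the surviving variable $z_n$ alone does not yet give a form on $X^n$. The local kernel is not compactly supported in $z_1,\dots,z_{n-1}$. You must also multiply by cutoffs in those variables that are identically $1$ near $\mathrm{supp}\,\rho_\alpha$. This is harmless, since $\vec\mu$ only sees the germ along the diagonal, and it plays the role of the factor $\rho_{i_1}\boxtimes\cdots\boxtimes\rho_{i_n}$ in the paper.
\item \emph{Unit padding.} Padding by the unit needs the kernel $\frac{dz_1}{z_1-z_2}$. The regular section $1\boxtimes v$ is killed by $\mu$.
\end{itemize}
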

\begin{proof}
Partition of unity, exactly as in \cite[Lem. 5.5]{Gui23}: cover $X$ by $\{U_i\}$ subordinate to a partition $\{\rho_i\}$, choose local presentations $\tilde v_{i_1\cdots i_n}$ on overlaps, and set $\tilde v=\sum\tilde v_{i_1\cdots i_n}\cdot\rho_{i_1}\boxtimes\cdots\boxtimes\rho_{i_n}$.
\end{proof}

\begin{definition}
\label{defnormalordering}
For $v$, $\tilde v$ as in Lemma \ref{lempresentation}, set
\begin{equation}
\label{eqnormalorderingdef}
\sW^{\mathbf v}(v;\tilde v):=\vec\mu_{\wedge}\bigl(e^{\partial_P}\tilde v\bigr)\in\Delta_*\bigwedge{}^{\!\bullet}L.
\end{equation}
\end{definition}

\begin{proposition}
\label{propnormalorderingwelldef}
$\sW^{\mathbf v}(v;\tilde v)\in\bigwedge^\bullet L\subset\Delta_*\bigwedge^\bullet L$ is independent of the choice of presentation $\tilde v$, and on a chart $U$ with coordinate $z$,
\begin{equation}\label{eqnormalorderinglocal}
\sW^{\mathbf v}(v;\tilde v)|_U=e^{\Qreg}\tau^z_U(v).
\end{equation}
Consequently, the local operators glue to a globally defined, smooth (non-holomorphic) map 
$\sW^{\mathbf v}:\CE\to\bigwedge^\bullet L$.
\end{proposition}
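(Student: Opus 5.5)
The plan is to prove the local formula \eqref{eqnormalorderinglocal} first, for an arbitrary presentation; independence of the choice of $\tilde v$ and the gluing statement then follow from it. This is the strategy of \cite[Prop. 5.8]{Gui23}, transported to the exterior setting.

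\emph{Step 1: reduce to a chart.} Fix a chart $U$ with coordinate $z$ and restrict $\tilde v$ to $U^n$. Since $P|_{U\times U}=\Psing+\Qreg$, and the two pieces are even bisections whose contraction operators supercommute (they are built from the same Koszul-signed pairwise contractions), we can factor
\[
e^{\partial_P}=e^{\Qreg}\,e^{\Psing}.
\]
This is the fermionic analogue of $e^{A+B}=e^Ae^B$ for commuting second-order operators. Its proof uses only the fact that the $\pf$-contractions of \eqref{eqpfaffiandef} are generated by commuting even operators $\partial_K$. Each $\partial_K$ pairs two odd legs, so it is even and no sign obstructs the factorization.

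\emph{Step 2: peel off the singular part.} Write $\tilde v$ as a sum of chiral products. Apply Theorem \ref{thmwichintertwine}, or more precisely its singular half, which is the fermionic Wick theorem \ref{thmfermwick} iterated $n-1$ times by induction on $n$, using associativity of $\vec\mu$. This gives
\[
\vec\mu_{\wedge}\bigl(e^{\Psing}\,\tau_U^{z\boxtimes n}\tilde v\bigr)=\tau^z_U\bigl(\vec\mu_{\CE}\tilde v\bigr)=\tau^z_U(v).
\]
Next I use that $e^{\Qreg}$ is compatible with the $\sD$-module structure (Lemma \ref{lemcontractionprops}) and that $\Qreg$ is smooth across the diagonal. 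Together these let $e^{\Qreg}$ commute with $\vec\mu_\wedge$: on the regular kernel, contracting before or after pushing forward to the diagonal gives the same result, with the self-loops accounting for pairs of legs that collide. Combining the two facts,
\[
\vec\mu_\wedge\bigl(e^{\partial_P}\tilde v\bigr)=e^{\Qreg}\,\vec\mu_\wedge\bigl(e^{\Psing}\tilde v\bigr)=e^{\Qreg}\tau^z_U(v),
\]
which is \eqref{eqnormalorderinglocal}. The right-hand side does not involve $\tilde v$, so $\sW^{\mathbf v}$ is independent of the presentation. Moreover, $\tau^z_U(v)$ lies in $\bigwedge^\bullet L$ rather than in the larger $\Delta_*\bigwedge^\bullet L$, which gives the stated membership.

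\emph{Step 3: glue.} Independence of the presentation makes the local expressions agree on overlaps $U\cap U'$. Both restrict to $\vec\mu_\wedge(e^{\partial_P}\tilde v)$ for the same global $\tilde v$ supplied by Lemma \ref{lempresentation}, and $P$ itself is global. Hence they glue to a global map $\CE\to\bigwedge^\bullet L$. Smoothness follows from smoothness of $\Qreg$, and non-holomorphicity comes from the $h$-dependent part $a_0$.

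\emph{Main obstacle.} I expect the hardest step to be commuting $e^{\Qreg}$ past $\vec\mu_\wedge$ with correct Koszul signs, including the self-loop terms. Each odd leg must be transported through the others consistently, so that the Pfaffian sign in \eqref{eqpfaffiandef} matches the sign produced by wedge multiplication on the diagonal. My first approach would be to check the case $n=2$ with two legs explicitly using \eqref{eqQregexpansion}, and then run an induction that contracts one pair at a time.
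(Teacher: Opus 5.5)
Your proposal is correct and is essentially the paper's argument. The paper cites the packaged intertwining Theorem~\ref{thmwichintertwine} for $\sW^{\tau^z_U}=e^{\Psing+\Qreg}$ to get $\vec\mu_\wedge(\sW^{\tau^z_U}\tau_U^{z\boxtimes n}\tilde v)=\sW^{\tau^z_U}\tau^z_U(v)$, then drops $\Psing$ on the single point because it has no self-loops (Lemma~\ref{lemcontractionprops}); you instead split $e^{\partial_P}=e^{\Qreg}e^{\Psing}$ up front, handle the singular half by the iterated Wick theorem and the regular half by commuting $e^{\Qreg}$ past $\vec\mu_\wedge$, which is the same intertwining theorem unpacked into its two ingredients, with presentation-independence and gluing following in the same way.
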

\begin{proof}
 As in \cite[Prop. 5.9, Cor. 5.10]{Gui23}, using Theorem \ref{thmwichintertwine} in place of \cite[Thm. 5.7]{Gui23}, writing $\sW^{\mathbf v}(v;\tilde v)=\vec\mu_{\wedge}(\sW^{\tau^z_U}(\tau_U^{z\boxtimes n}\tilde v))$ and applying Theorem \ref{thmwichintertwine} gives $\sW^{\mathbf v}(v;\tilde v)=\sW^{\tau^z_U}(\tau^z_U(\vec\mu_{\CE}(\tilde v)))=\sW^{\tau^z_U}(\tau^z_U(v))$, which manifestly depends on $v$ alone. Equation \eqref{eqnormalorderinglocal} is then the local splitting \eqref{eqPexpansion}, since $\Psing$ contributes no further correction beyond the identification $\tau^z_U$ itself (its self-loops vanish by Lemma \ref{lemcontractionprops}, and its non-self-loop contractions are already accounted for by $\tau^z_U$ via Theorem \ref{thmfermwick}), leaving only $e^{\Qreg}$. This chain of equalities  is the required diagram representation between the two ways of computing $\sW^{\mathbf v}(v;\tilde v)$, i.e.,  via an arbitrary \v{C}ech-type presentation $\tilde v$ (Lemma \ref{lempresentation}), or directly from $v$, and shows the two agree because both factor through the single, presentation-independent map $\sW^{\tau^z_U}\circ\tau^z_U$. No separate connecting homomorphism needs to be introduced, since $\vec\mu_{\CE}$ already is the comparison map between a \v{C}ech presentation and its underlying Dolbeault-complex section.
\end{proof}

%%%%%%%%%%%%%%%%%%%%%%%%%%%%%%%%%%%%%%%%%%%%%%%%%%%%%%%%%%%%%%%%%%%%%%%%%%%
\section{The Trace Map on the Chiral Clifford Algebra}
\label{sectrace}

\subsection{Trace map on the fermionic Lie$^*$ superalgebra}
Recall from \eqref{eqPantisym}-\eqref{eqQregexpansion} that $\dbar P(z_1,z_2)$, as a section of $\Omega^{0,1}(X^2,E\boxtimes E)$, is expressed in terms of the harmonic bases of \S\ref{ssecharmonic} by
\begin{equation}\label{eqdbarP}
\dbar P(z_1,z_2)=\sum_{i,j}I^{ij}\,e^1_i\boxtimes e^0_j-\sum_{i,j}I^{ij}\,e^0_j\boxtimes e^1_i,
\end{equation}
the standard reproducing-kernel identity for the Green's operator of Theorem \ref{thmszegoexists}, i.e., $\dbar_{z_1}P$ represents, in the second variable, the orthogonal projector onto $\HH^0(X,E)$ minus its transpose. This is the fermionic counterpart of \cite[eq. (5.1)]{Gui23} and is proved in the same way, using \eqref{eqPgreen}. For $e\in\HH^0(X,E)\oplus\HH^1(X,E)$ (an inhomogeneous element, allowed to have components in both pieces, matching the generating-function role played by $e$ below) we have, as in \S5.1 of \cite{Gui23}, the odd derivation
\[
\pd_e:\Omega^{0,\bullet}(X,\Lflat)\to\Omega^{0,\bullet}(X,\wX)
\]
induced by $E\otimes_{\sO_X}E_\sD\to\wX\otimes_{\sO_X}\DX\to\wX$, extended to $\Omega^{0,\bullet}(X^n,\Lflat{}^{\boxtimes n})$ by the graded Leibniz rule, acting trivially on the central $\wX\subset\Lflat$.

\begin{definition}
\label{defTrLie}
We have: $\WLie:\Omega^{0,\bullet}(X^n,\Lflat{}^{\boxtimes n})\xrightarrow{e^{\partial_P}}\Omega^{0,\bullet}(X^n,\Lflat{}^{\boxtimes n}(*\Delta))$, with $p$ the projection $\Lflat\to\wX$ and $\trom:\Omega^{0,\bullet}(X^n,\wX^{\boxtimes n}(*\Delta))\to\CC$ the trace map on the unit chiral algebra (\cite[\S4.3.3]{BD04}, a quasi-isomorphism. Specifically, it is  the iterated residue/integration map sending a meromorphic top-degree section with poles only along partial diagonals to the complex number obtained by successively taking residues at each variable and integrating the last one over $X$. The basic instance of a trace map on which every chiral trace map, including $\Trch$ below, is ultimately built, set
\begin{equation}\label{eqTrLiedef}
\TrLie(\eta)[e]:=\sum_{k\ge0}\frac1{k!}\,\trom\circ\mathbf p\bigl(\pd_e^k\WLie(\eta)\bigr),\qquad \eta\in\Omega^{0,\bullet}(X^n,\Lflat{}^{\boxtimes n}),
\end{equation}
extended to $\widetilde\sC^{\mathrm{Lie}}(X,L)_{\sQ}\to\OBV$ by the same formula, compatibly with the $\sD$-module structure.
\end{definition}

\begin{remark}
\label{remfiniteness}
{\it Finiteness of the sum.} 
For fixed $\eta$, only finitely many terms in \eqref{eqTrLiedef} are nonzero. $\eta$ involves a fixed finite number $N$ of tensor factors of $\Lflat$, and each application of $\pd_e$ removes (via the pairing) one such factor. Once all $N$ factors are exhausted, $\pd_e^{k}\WLie(\eta)=0$ for $k>N$. Reading the output as a function of $e=e^0+e^1$ (formally splitting $e$ into its $\HH^0$- and $\HH^1$-components, each contracted with the appropriate, oppositely-parity-assigned, part of $\OBV$ as in Definition \ref{defOBV}), the finite sum \eqref{eqTrLiedef} is then  the Taylor expansion of a polynomial function of $e^1$ (finite degree, since only finitely many $\HH^1$-contractions are possible) times a (finite, since $\HH^0$ contributes exterior/nilpotent directions) polynomial in the Grassmann variable $e^0$. This is the sense in which $\TrLie(\eta)\in\OBV=\bigwedge^\bullet\HH^0\otimes\Sym\HH^1$, consistently with Definition \ref{defOBV}.
\end{remark}

\begin{theorem}
\label{thmTrLiechainmap}
$\TrLie:(\widetilde\sC^{\mathrm{Lie}}(X,L)_{\sQ},d_L^{\mathrm{Lie}*})\to(\OBV,-\DBV)$ is a chain map.
\end{theorem}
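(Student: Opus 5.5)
We will follow the strategy of the bosonic case, \cite[Thm.~5.3/Prop.~5.4]{Gui23}, and check the identity $\TrLie\circ d_L^{\mathrm{Lie}*}=-\DBV\circ\TrLie$ on each summand of $\widetilde\sC^{\mathrm{Lie}}(X,L)_{\sQ}$. The differential $d_L^{\mathrm{Lie}*}$ has two parts. The first is the Dolbeault part $\dbar$, acting on $\Omega^{0,\bullet}(X^n,\Lflat{}^{\boxtimes n})$. The second is the Chevalley--Eilenberg (Lie$^*$) part $d^{\mathrm{Lie}}$, which contracts two tensor factors $\psi_a\boxtimes\psi_b$ through $\mu_{\mathrm{Lie}}$ into $\Delta_*\wX$ and pushes forward along the partial diagonal $\Delta_{ab}$. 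We will handle the two parts separately. Since $\mu_{\mathrm{Lie}}$ lands in the central $\wX$, the Jacobi identity (Lemma~\ref{lemLiestarverify}) makes $d^{\mathrm{Lie}}$ square to zero trivially. All interaction therefore comes from three places: the propagator $P$, the operator $\trom$, and the harmonic insertions $\pd_e$.

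\textbf{Step 1: commuting $\dbar$ past the trace.}
Because $\trom$ integrates top-degree forms over a compact $X^n$, Stokes' theorem gives $\trom\circ\dbar=0$ up to boundary contributions along the diagonals. These contributions are residues of the poles created by $e^{\partial_P}$. We will compute $[\dbar,e^{\partial_P}]$. Since $\dbar$ is a derivation commuting with the Grassmann-signed contractions, we get
\[
[\dbar,e^{\partial_P}]=e^{\partial_P}\,\partial_{\dbar P},
\]
where $\dbar P$ is taken in the distributional sense. This splits into two pieces:
\begin{enumerate}[leftmargin=1.6em]
\item the delta-function part coming from the simple pole $\Psing=\id\,dz/(z_1-z_2)$ (Definition~\ref{defPsingQreg}), with $\dbar(1/z)=\pi\delta$;
\item the smooth harmonic part \eqref{eqdbarP}.
\end{enumerate}
The delta-function part reproduces exactly the contraction $\mu_{\mathrm{Lie}}$ on the diagonal. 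This is the content of the Wick theorem (Theorem~\ref{thmfermwick}) at the Lie$^*$ level, i.e.\ the $n=2$, one-leg case. Under $\trom$ it therefore cancels $\TrLie\circ d^{\mathrm{Lie}}$. The sign in this cancellation is controlled by the super-antisymmetry \eqref{eqPantisym} together with the symmetry of $\langle-,-\rangle$, exactly as in the proof of Lemma~\ref{lemLiestarverify}.

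\textbf{Step 2: matching the harmonic part with $-\DBV$.}
Inserting \eqref{eqdbarP} turns $\partial_{\dbar P}$ into
\[
\sum_{i,j}I^{ij}\bigl(\partial_{e^1_i}\partial_{e^0_j}-\partial_{e^0_j}\partial_{e^1_i}\bigr)
\]
acting on the generating function $\sum_k\frac{1}{k!}\pd_e^k$. Differentiation in $e$ commutes with $\pd_e^k$ after reindexing $k$. Using the parity assignment of Definition~\ref{defOBV} (with $e^0$ odd and $e^1$ even), the two terms combine into $-\DBV$ of \eqref{eqDBVdef} applied to $\TrLie(\eta)[e]$. Remark~\ref{remfiniteness} ensures that all manipulations are on finite sums.

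\textbf{Main obstacle.}
We expect the main difficulty to be the sign bookkeeping in Step~2, namely showing that the antisymmetrized kernel \eqref{eqdbarP} yields $-\DBV$ and not $-2\DBV$ or $0$. A related issue is that $\pd_e$ is an odd derivation fed by an inhomogeneous $e$. We would first test the identity on a two-leg $\eta=\psi_1\boxtimes\psi_2$ with $n=1$ harmonic modes. From that case we would fix the normalization of $\pd_e$ for Grassmann versus commuting components. The general case then follows by the graded Leibniz rule, since both sides are second-order operators determined by their values on quadratic monomials (Proposition~\ref{propBVaxioms}).
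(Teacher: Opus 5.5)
Your architecture matches the paper's proof. You conjugate the differential through $\WLie=e^{\pd_P}$, let the local part of the propagator account for the Lie$^*$ bracket, let the harmonic kernel \eqref{eqdbarP} produce $\DBV$ on the generating function $\sum_k\frac1{k!}\pd_e^k$, and kill the remainder with the chain-map property of $\trom$.

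\textbf{Step 2 fails as written.} This is the only place where $\DBV$ actually enters. Under the parity assignment of Definition \ref{defOBV} that you invoke, $\pd_{e^1_i}$ is an even derivation and $\pd_{e^0_j}$ an odd one, in independent variables. They therefore commute, and $\sum_{i,j}I^{ij}(\pd_{e^1_i}\pd_{e^0_j}-\pd_{e^0_j}\pd_{e^1_i})$ is identically zero: you get exactly the outcome $0$ you were worried about. The minus sign in \eqref{eqdbarP} is the exchange symmetry \eqref{eqPantisym} of the two-point kernel (swap of the points together with the two factors). It is not an ordering of derivations on $\OBV$, so you cannot transcribe it into a commutator there while also using the $\OBV$ parities.

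What is needed instead is the pairwise computation:
\begin{enumerate}
\item Fix one grading. The total grading (parity of $E$ plus form degree), in which $\HH^1(X,E)$ is even as in Definition \ref{defOBV}, is a natural choice.
\item Evaluate $\pd_{\dbar P}$ on a pair of odd legs $\psi_a\boxtimes\psi_b$, keeping its Koszul signs.
\item Compare this with what $\pd_{e^0_j}\pd_{e^1_i}$ extracts from $\frac1{k!}\pd_e^k$. That is $\frac1{(k-2)!}\pd_e^{k-2}$ composed with contracting one leg against the $\HH^0$-component and another against the $\HH^1$-component, summed over ordered pairs of distinct legs.
\end{enumerate}
The two terms of \eqref{eqdbarP} are the two ordered assignments of $(e^0,e^1)$ to $\{a,b\}$. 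This pair-by-pair matching, in which each side selects exactly one pair of legs, is what must produce a single copy of $\DBV$. The sign comes from $\WLie\circ\dbar=(\dbar-\pd_{\dbar P})\circ\WLie$. Proposition \ref{propBVaxioms} plays no role in reducing to two legs. The paper does not spell this out either; it identifies the contraction against \eqref{eqdbarP} with $\DBV$ directly. But your explicit formula gives the wrong answer.

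\textbf{Step 1 counts the diagonal term twice.} You describe it once as the Stokes boundary residues of $\trom\circ\dbar$ and once as the delta-function part of a distributional $\dbar P$; only one of these may be counted. The paper uses neither device directly:
\begin{enumerate}
\item It works on $\Omega^{0,\bullet}(X^n,\Lflat{}^{\boxtimes n}(*\Delta))$. There $\dbar$ acts only on the smooth coefficients and annihilates the meromorphic factor $\Psing$, so $[\dbar,\WLie]$ involves only the smooth kernel \eqref{eqdbarP}.
\item The diagonal term appears as the residue (chiral-product) part of $d^{\ch}_{\bigwedge}$. By the factorization \eqref{eqmuLiedef2}, $\mu_{\mathrm{Lie}}=\mu_\omega\circ\pd_P$, so $\WLie$ carries the Chevalley--Eilenberg part of $d_L^{\mathrm{Lie}*}$ to that residue part.
\item $\mathbf p$ intertwines $d^{\ch}_{\bigwedge}$ with $d^{\ch}_\omega$, and $\trom\circ d^{\ch}_\omega=0$ because $\trom$ is a chain map.
\end{enumerate}
This is the precise form of your ``Stokes up to diagonal residues.'' The local identity it rests on is \eqref{eqmuLiedef2}, not the Wick theorem for $\CE$ (Theorem \ref{thmfermwick}), which concerns the chiral envelope rather than the Lie$^*$ complex. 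With these two repairs your argument coincides with the paper's.
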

\begin{proof}
As in \cite[Thm. 5.4]{Gui23}, using \eqref{eqdbarP} in place of \cite[eq. (5.1)]{Gui23}, 
\begin{align*}
\TrLie(\dbar\eta)[e]&=\sum_{k\ge0}\frac1{k!}\trom\circ\mathbf p\bigl(\pd_e^k\WLie(\dbar\eta)\bigr)
=\sum_{k\ge0}\frac1{k!}\trom\circ\mathbf p\Bigl(\pd_e^k\bigl(\dbar-\pd_{\dbar P(z_1,z_2)}\bigr)\WLie(\eta)\Bigr)\\
&=\sum_{k\ge0}\frac1{k!}\trom\circ\mathbf p\bigl(\pd_e^k(\dbar-\DBV)\WLie(\eta)\bigr),
\end{align*}
where the last equality substitutes \eqref{eqdbarP} and identifies the resulting bidifferential operator, contracted against the Lie$^*$-pairing $L\boxtimes L\to\Delta_*\wX$ (which, as noted in \eqref{eqmuLiedef2}, factors through $\pd_P$ followed by $\mu_\omega$), with $\DBV$ acting on the harmonic exterior/symmetric algebra. This substitution is the direct fermionic  version of \cite[proof of Thm. 5.4]{Gui23}, valid because the only property of the pairing used there is bilinearity together with the identification of $\dbar P$ via its harmonic-projection reproducing property, both of which hold in our setting by Theorem \ref{thmszegoexists}. Analogously,
\begin{align*}
\TrLie(d_L^{\mathrm{Lie}*}\eta)[e]&=\sum_{k\ge0}\frac1{k!}\trom\circ\mathbf p\bigl((d^{\ch}_{\bigwedge}-\DBV)\pd_e^k\WLie(\eta)\bigr)\\
&=\underbrace{\sum_{k\ge0}\frac1{k!}\trom\circ d^{\ch}_\omega\mathbf p\bigl(\pd_e^k\WLie(\eta)\bigr)}_{=0\ (\trom\text{ is a chain map})}-\bigl(\DBV\TrLie(\eta)\bigr)[e],
\end{align*}
using that $\mathbf p$ intertwines $d^{\ch}_{\bigwedge}$ with $d^{\ch}_\omega$ and that $\DBV$ (being built from $\pd_{e^0}\pd_{e^1}$-type second-order contractions, which commute with the further contraction $\pd_e^k$ in the sense used here - this is where Proposition \ref{propBVaxioms} is used) commutes with $\pd_e^k$ up to the terms already collected. This proves $\TrLie(d_L^{\mathrm{Lie}*}\eta)=-\DBV\TrLie(\eta)$.
\end{proof}
Note that the pairing 
\begin{equation}
\label{eqmuLiedef2}
 L\boxtimes L\to\Delta_*\wX, 
\end{equation}
 factors as
\[
L\boxtimes L\xrightarrow{\pd_P}\wX\boxtimes\wX(*\Delta)\xrightarrow{\mu_\omega}\Delta_*\wX.
\]

\subsection{The global contraction map and the trace map on $\CE$}
Exactly as in \S\ref{ssecchiralcliff}-\ref{secszego}, define
\[
\Wch:=e^{\Psing}\circ(\sW^{\mathbf v})^{\boxtimes n}:\ \Omega^{0,\bullet}(X^n,\CE^{\boxtimes n}(*\Delta))\to\Omega^{0,\bullet}(X^n,(\bigwedge^\bullet L)^{\boxtimes n}(*\Delta)).
\]
Locally $\Wch=\sW^{\tau^z_U}\circ\tau_U^{z\boxtimes n}$, by the same computation as in \cite[\S5.3]{Gui23}: split $P=\Psing+\Qreg$ as in \eqref{eqPexpansion} and use $(\sW^{\mathbf v})^{\boxtimes n}=e^{\Qreg}\circ(\tau^z_U)^{\boxtimes n}$ (Proposition \ref{propnormalorderingwelldef}) to identify $e^{\Psing}\circ(\sW^{\mathbf v})^{\boxtimes n}=e^{\Psing+\Qreg}\circ(\tau^z_U)^{\boxtimes n}=\sW^{\tau^z_U}\circ\tau_U^{z\boxtimes n}$.

\begin{definition}
\label{deftracemapch}
With $\mathbf p:\Omega^{0,\bullet}(X^n,(\bigwedge^\bullet L)^{\boxtimes n}(*\Delta))\to\Omega^{0,\bullet}(X^n,\wX^{n}(*\Delta))$ the projection induced by $\bigwedge^\bullet L\to\bigwedge^0L=\wX$,
\[
\Trch:\Omega^{0,\bullet}(X^n,\CE^{\boxtimes n}(*\Delta))\to\OBV,\qquad \Trch(\eta)[e]:=\sum_{k\ge0}\frac1{k!}\trom\circ\mathbf p\bigl(\pd_e^k\Wch(\eta)\bigr),
\]
extended to $\Trch:\widetilde\sC^{\ch}(X,\CE)_{\sQ}\to\OBV$ by the same formula.
\end{definition}

\begin{theorem}
\label{thmmainQME}
The map $\Trch:(\widetilde\sC^{\ch}(X,\CE)_{\sQ},\dch_{\CE})\to(\OBV,-\DBV)$ is a chain map satisfying the generalized quantum master equation \eqref{eqgeneralQME}, and is furthermore a quasi-isomorphism.
\end{theorem}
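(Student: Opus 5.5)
The proof will follow the structure of \cite[Thm. 5.11, Cor. 5.12]{Gui23}. The plan is to reduce the chain-map property for $\Trch$ to Theorem \ref{thmTrLiechainmap} for $\TrLie$, and to obtain the quasi-isomorphism from a filtration argument using Theorem \ref{thmchiralPBW}.

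The total differential $\dch_{\CE}$ on $\widetilde\sC^{\ch}(X,\CE)_{\sQ}$ splits as $\dbar+d^{\ch}_{\mu}$, where $d^{\ch}_{\mu}$ is the part given by chiral products $\mu_{\CE}$ along partial diagonals $\Delta^{(\pi)}$. I will show separately that
\[
\Trch(\dbar\eta)=-\DBV\Trch(\eta)\big|_{\dbar\text{-part}}
\qquad\text{and}\qquad
\Trch(d^{\ch}_\mu\eta)=\text{(remaining part)}.
\]
Summing the two identities gives $(\dch_{\CE}+\DBV)$ applied to $\Trch$ equal to zero, i.e.\ \eqref{eqgeneralQME}.

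For the $\dbar$-part, I commute $\dbar$ past $\Wch=e^{\Psing}\circ(\sW^{\mathbf v})^{\boxtimes n}$. Locally this is $e^{P}\circ\tau^{z\boxtimes n}_U$, and $\tau^z_U$ is holomorphic, so the commutator is $\pd_{\dbar P}$. By \eqref{eqdbarP}, $\dbar P$ is the harmonic bisection $\sum I^{ij}(e^1_i\boxtimes e^0_j-e^0_j\boxtimes e^1_i)$. Inside the generating function $\sum_k\frac1{k!}\pd_e^k$, contraction against this bisection becomes exactly $\sum I^{ij}\pd_{e^0_i}\pd_{e^1_j}=\DBV$ acting on the $e$-variables. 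Here the parity convention of Definition \ref{defOBV}, which makes $e^1$ even, is what makes the signs of the two summands of \eqref{eqdbarP} add rather than cancel. The same Koszul bookkeeping already appears in Lemma \ref{lemgradingforce}.

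For the chiral part, Theorem \ref{thmwichintertwine}, together with its global form via Proposition \ref{propnormalorderingwelldef}, gives
\[
\Wch\circ\mu_{\CE}=\mu_{\wedge}\circ\Wch ,
\]
so $\Wch$ intertwines $d^{\ch}_\mu$ with the chiral differential $d^{\ch}_{\wedge}$ of the commutative chiral algebra $\bigwedge^\bullet L$ (twisted by the residual central term). Next, $\pd_e$ is a derivation commuting with $\mu_\wedge$ up to the Lie$^*$ pairing, and $\mathbf p\circ\mu_\wedge=\mu_\omega\circ\mathbf p$. Finally, $\trom$ is a chain map (\cite[\S4.3.3]{BD04}), so it kills $d^{\ch}_\omega$-exact terms. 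What remains are the terms where $\mu_\wedge$ contracts two legs through $\mu_{\mathrm{Lie}}$. By the factorization \eqref{eqmuLiedef2}, these recombine with the $\dbar$-terms exactly as in the proof of Theorem \ref{thmTrLiechainmap}.

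\textbf{Main obstacle.} I expect the hardest step to be the sign verification that the Pfaffian exponential $e^{\pd_P}$ of \eqref{eqpfaffiandef} satisfies the Leibniz-type identity $[\dbar,e^{\pd_P}]=e^{\pd_P}\pd_{\dbar P}$ with the correct Koszul signs on odd legs. My first attempt will be to check this on the generators $\bigwedge^2L$, using \eqref{eqPantisym}, and then extend it multiplicatively, since $\pd_P$ is an even second-order operator.

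For the quasi-isomorphism statement, I will filter both sides: the source by the PBW filtration of Theorem \ref{thmchiralPBW}, and $\OBV$ by polynomial degree. $\Trch$ preserves these filtrations, because $\Psing$ and $\Qreg$ contractions only lower PBW degree. On the associated graded, Theorem \ref{thmchiralPBW} identifies the source with the chiral chain complex of the commutative chiral algebra $\bigwedge^\bullet L$. There $\gr\Trch$ is the fermionic chiral HKR map, which sends a chain to its harmonic-projection polynomial with $\DBV$ replaced by $0$.

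That this HKR map is a quasi-isomorphism follows from two facts. First, chiral homology of a free commutative chiral algebra $\bigwedge^\bullet E_\sD$ is the (super)symmetric algebra on $H^\bullet(X,E)$, with degree shifted so that $\HH^1$ becomes even; this is \cite[\S4.6]{BD04} in the super setting, combined with the harmonic identification of \S\ref{ssecharmonic}. Second, finite dimensionality (A5) lets me conclude via the spectral sequence of the bounded, exhaustive filtration, which converges.
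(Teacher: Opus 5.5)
Your outline is sound and uses the same ingredients as the paper: the harmonic bisection \eqref{eqdbarP} turning $\pd_{\dbar P}$ into $\DBV$, Wick intertwining, $\trom$ being a chain map, and a fermion-number filtration whose associated graded is the chiral HKR map. It is, however, organized differently in two places.

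\emph{Chain-map part.} The paper never splits $\dch_{\CE}$. It writes $\eta=\vec\mu_{\CE}(\tilde\eta)$ by Lemma \ref{lempresentation}, transports the identity to the Lie$^*$ chain $\tilde\eta$ via $\Wch\circ\vec\mu_{\CE}=\vec\mu_{\wedge}\circ\WLie$, and quotes Theorem \ref{thmTrLiechainmap}. You instead commute $\dbar$ directly past the local form $e^{\pd_P}\circ\tau_U^{z\boxtimes n}$ of $\Wch$, which is legitimate because $\dbar P$ is global, so the local commutators glue. You then treat $\mu_{\CE}$ by Theorem \ref{thmwichintertwine}. This avoids the presentation lemma, at the price of checking $[\dbar,e^{\pd_P}]=e^{\pd_P}\pd_{\dbar P}$ yourself.

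Tighten one piece of bookkeeping: the $\dbar$-part and the chiral part cannot be handled separately. The reason is that $\trom$ only annihilates exact terms for the total differential: Stokes on $X^n$ with poles along the diagonals produces precisely the residues that the chiral part cancels. Moreover, after Theorem \ref{thmwichintertwine} there are no leftover $\mu_{\mathrm{Lie}}$-contractions. The clean statement is $\Wch\circ\dch_{\CE}=(d^{\ch}_{\bigwedge}-\pd_{\dbar P})\circ\Wch$. After this, $\trom\circ\mathbf p\circ\pd_e^k$ kills the first term as a whole, and the second gives $-\DBV\Trch(\eta)$.

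\emph{Quasi-isomorphism.} The paper identifies each graded piece with $(\Omega^{0,\bullet}(X,\bigwedge^kE),\dbar)$ and uses that the filtration stops at $k=2r$. You instead identify the associated graded of the source with the chiral chain complex of the free commutative chiral superalgebra $\bigwedge^\bullet E_{\sD}$, and invoke its chiral homology \cite[\S4.6]{BD04}. Yours is the identification that actually produces $\bigwedge^\bullet\HH^0(X,E)\otimes\Sym\HH^1(X,E)$. The wedge degree in $L=E_{\sD}$ is unbounded, and $\Sym\HH^1(X,E)$ is infinite-dimensional as soon as $n\ge1$. For the same reason you should not rely on (A5). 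It is unnecessary, because an exhaustive filtration that is bounded below already gives convergence and the comparison theorem.
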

This is the precise statement, at the level of the trace map, that the outer rectangle of diagram \eqref{eqbigdiagram} commutes up to $\dch_{\CE}$ and $-\DBV$. We split the proof into the two lemmas below, tracing explicitly how a single chiral chain $\eta$ passes through each arrow of that diagram.

\begin{lemma}[Chain map/QME]
\label{lemchainmapQME}
$\Trch(\dch_{\CE}\eta)=-\DBV\Trch(\eta)$ for all $\eta\in\widetilde\sC^{\ch}(X,\CE)_{\sQ}$.
\end{lemma}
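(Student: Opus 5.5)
The plan is to reduce Lemma \ref{lemchainmapQME} to the Lie$^*$ case, Theorem \ref{thmTrLiechainmap}, by pulling back along the chiral envelope. The differential $\dch_{\CE}$ on $\widetilde\sC^{\ch}(X,\CE)_{\sQ}$ splits as $\dch_{\CE}=\dbar+d_\mu$. Here $\dbar$ is the Dolbeault part, and $d_\mu$ is the chiral (Chevalley--Cousin) part, given by applying the chiral product $\mu_{\CE}$ along the partial diagonals $\Delta^{(\pi)}$ for $\pi\in\mathsf Q(I,n-1)$. I will check the identity separately on each piece, after first rewriting both pieces through $\Wch$.

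\emph{Step 1: the chiral part.} By Theorem \ref{thmwichintertwine}, applied locally and glued via Proposition \ref{propnormalorderingwelldef}, we have $\Wch\circ\mu_{\CE}=\mu_{\wedge}\circ\Wch$. The local identity $\Wch=\sW^{\tau^z_U}\circ\tau_U^{z\boxtimes n}$ makes the chart-independence automatic. Hence $\Wch$ intertwines $d_\mu$ with the chiral differential $d^{\ch}_{\bigwedge}$ of the commutative chiral superalgebra $\bigwedge^\bullet L$. Next, $\pd_e$ is an odd derivation of the wedge product that acts trivially on the central $\wX$. So $\pd_e^k$ graded-commutes with $\mu_\wedge$, and the Koszul sign $(-1)^k$ is absorbed in the relabelling of $\pd_e^k/k!$ as a function of $e=e^0+e^1$. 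The projection $\mathbf p$ intertwines $d^{\ch}_{\bigwedge}$ with $d^{\ch}_\omega$, and $\trom\circ d^{\ch}_\omega=0$ because $\trom$ is a chain map (\cite[\S4.3.3]{BD04}). Therefore the $d_\mu$ part contributes nothing except through the singular contractions. These are exactly the $\Psing$-terms already encoded in $e^{\Psing}$, by Theorem \ref{thmfermwick}, so no residual term survives.

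\emph{Step 2: the Dolbeault part.} $\Psing$ is holomorphic off $\Delta$, so the commutator $[\dbar,e^{\Psing+\Qreg}]$ only involves $\dbar P$ (away from the diagonal, where the residues of $\trom$ are taken). By the Pfaffian identity $[\dbar,\pf(K)]=\pd_{\dbar K}\cdot\pf(K)$, this gives
\[
\Wch(\dbar\eta)=\bigl(\dbar-\pd_{\dbar P}\bigr)\Wch(\eta).
\]
That identity follows from the Leibniz rule for $\dbar$, since a single contraction is differentiated and the signs $\mathrm{sgn}(\sigma)$ are unaffected. I then substitute \eqref{eqdbarP}, which expresses $\dbar P$ through the harmonic bases, $\sum I^{ij}(e^1_i\boxtimes e^0_j-e^0_j\boxtimes e^1_i)$. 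After applying $\sum_k\frac1{k!}\trom\circ\mathbf p\circ\pd_e^k$, a contraction against $e^1_i\boxtimes e^0_j$ becomes differentiation of the generating function in the directions $e^1_i,e^0_j$. This identifies it with $\DBV=\sum I^{ij}\pd_{e^0_i}\pd_{e^1_j}$ of \eqref{eqDBVdef}. The total $\dbar$ term is killed by $\trom$, which is Stokes on compact $X$. This is the same computation as in Theorem \ref{thmTrLiechainmap}, with $e^{\pd_P}$ replaced by $\Wch$.

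\emph{Main obstacle.} The hard part is the sign bookkeeping in Step 2. The two summands in \eqref{eqdbarP} come with opposite signs, because of the super-antisymmetry \eqref{eqPantisym}. They must combine into a single copy of $\DBV$, with the opposite parities of $\HH^0$ and $\HH^1$ from Definition \ref{defOBV}, rather than cancel or double. To settle this, I would first check the case $n=1$ with a single pair of legs, and compare with the opposite-parity computation of Lemma \ref{lemgradingforce}. After that, I would argue that higher $k$ follows because $\DBV$ has order two and $\pd_e$ has order one (Proposition \ref{propBVaxioms}). The boundary terms in Stokes near $\Delta$ are handled as in \cite[Thm. 5.4]{Gui23}: the pole of $\Psing$ is simple, so only residues survive, and these are already accounted for in $d_\mu$.
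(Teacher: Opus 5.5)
Your argument is correct in substance. However, it reaches the key conjugation identity $\Wch\circ\dch_{\CE}=(\dch_{\bigwedge}-\pd_{\dbar P})\circ\Wch$ by a different route from the paper. Despite your opening sentence, you never actually pull back to Lie$^*$ chains.

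\emph{The paper's route.} It writes $\eta=\vec\mu_{\CE}(\tilde\eta)$ (Lemma \ref{lempresentation}) and uses $\Wch\circ\vec\mu_{\CE}=\vec\mu_{\wedge}\circ\WLie$. It then imports $\WLie\circ\dbar=(\dbar-\pd_{\dbar P})\circ\WLie$ from the proof of Theorem \ref{thmTrLiechainmap}. There every point carries a single leg, so there are no self-loops and only the global kernel $P$ appears. The cost is that one needs contraction against the smooth kernel $\dbar P$ to commute with $\vec\mu_{\wedge}$, and the chiral part of $\dch_{\CE}$ is left implicit.

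\emph{Your route.} You handle the chiral part explicitly via Theorem \ref{thmwichintertwine}. You compute $[\dbar,\Wch]$ directly from the local form $e^{\Psing+\Qreg}\circ\tau_U^{z\boxtimes n}$, which needs no presentation. It does oblige you to check three things you only gesture at:
\begin{enumerate}
\item $\dbar\Psing=0$ as a section with poles along $\Delta$; the $\delta$-type contributions live in $d_\mu$, not in $[\dbar,\Wch]$.
\item $\dbar\Qreg=\dbar P$ is chart-independent, even though $\Qreg$ is not, so the local identities glue.
\item $\dbar$ of a self-loop $\Qreg(z,z)$ is the self-loop of $\dbar P$ on the diagonal. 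This matters because $\pd_e^k$, and hence $\DBV$, also pairs legs sitting at the same point.
\end{enumerate}
Your reading of the two summands of \eqref{eqdbarP} as the two orientations of a contracted pair, together giving a single copy of $\DBV$, is the same identification the paper makes in Theorem \ref{thmTrLiechainmap}. So your ``main obstacle'' plan is fine.

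One thing to repair in the trace step: $\trom$ does not annihilate the chiral-product part and the $\dbar$ part separately. For chains with poles along the diagonals, the Stokes boundary residues from $\dbar$ are cancelled precisely by the $d_\mu$ term. So Step 1 (``the $d_\mu$ part contributes nothing'') and Step 2 (``killed by Stokes'') cannot each be closed off on its own, and your closing remark that the residues are ``accounted for in $d_\mu$'' contradicts Step 1 as written. What you should use, as the paper does, is
$\trom\circ\mathbf p\circ\pd_e^k\circ\dch_{\bigwedge}=\pm\,\trom\circ d^{\ch}_\omega\circ\mathbf p\circ\pd_e^k=0$
for the \emph{total} differential $\dbar+d_\mu$. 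Relatedly, the poles produced by $\Wch$ are not simple: contracting a $\pd^{k}$-leg with a $\pd^{l}$-leg gives a pole of order $k+l+1$ (Theorem \ref{thmfermwick}). This is harmless once $\trom$ is used as a chain map on the full unit chiral complex, but the ``simple pole, only residues'' justification should be dropped.
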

\begin{proof}
We must show $[\dbar,\Wch]=\DBV\circ\Wch$, i.e., trace the single chiral chain $\eta$ through
\[
\eta\ \longmapsto\ \Wch(\eta)\ \longmapsto\ \pd_e^k\Wch(\eta)\ \longmapsto\ \mathbf p\bigl(\pd_e^k\Wch(\eta)\bigr)\ \longmapsto\ \trom\circ\mathbf p\bigl(\pd_e^k\Wch(\eta)\bigr),
\]
the four arrows of diagram \eqref{eqbigdiagram} read off in order (normal-ordering/Szeg\H o regularization, contraction against the background $e$, projection to the central $\wX$-factor, and the trace map on the unit chiral algebra), summed over $k$ with weight $1/k!$ as in Definition \ref{deftracemapch}. Presenting $\eta=\vec\mu_{\CE}(\tilde\eta)$ as an iterated chiral product of a Lie$^*$ chain $\tilde\eta$ (Lemma \ref{lempresentation}),
\begin{align*}
\Wch(\dbar\eta)&=\Wch\bigl(\dbar\,\vec\mu_{\CE}(\tilde\eta)\bigr)=\vec\mu_{\wedge}\bigl(\WLie(\dbar\tilde\eta)\bigr)\\
&=\dbar\,\vec\mu_{\wedge}\bigl(\WLie(\tilde\eta)\bigr)-\DBV\,\vec\mu_{\wedge}\bigl(\WLie(\tilde\eta)\bigr)=\dbar\Wch(\eta)-\DBV\Wch(\eta),
\end{align*}
the middle equality by Theorem \ref{thmTrLiechainmap} applied to $\tilde\eta$ (this reduction of the chiral-envelope identity to the already-proved Lie$^*$-algebra identity is exactly the mechanism of \cite[proof of Thm. 5.11]{Gui23}, and appears similarly since it only uses that $\vec\mu_{\CE}$ and $\vec\mu_{\wedge}$ are both iterated chiral products built from the same presentation). Then, exactly as in the proof of Theorem \ref{thmTrLiechainmap},
\[
\Trch(\dch_{\CE}\eta)[e]=\sum_{k\ge0}\frac1{k!}\trom\circ\mathbf p\bigl((\dch_{\bigwedge}-\DBV)\pd_e^k\Wch(\eta)\bigr)=-\bigl(\DBV\Trch(\eta)\bigr)[e].\qedhere
\]
\end{proof}

\begin{lemma}[Quasi-isomorphism]
\label{lemquasiiso}
$\Trch$ is a quasi-isomorphism.
\end{lemma}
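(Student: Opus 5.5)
The approach is Gui's filtration argument \cite[Thm. 5.11]{Gui23}, with the symmetric algebra replaced by the exterior one. We filter both complexes and show that $\Trch$ induces a quasi-isomorphism on associated graded pieces, then pass to the limit.

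On the source, the PBW filtration of Theorem \ref{thmchiralPBW} induces an exhaustive, increasing filtration on $\widetilde\sC^{\ch}(X,\CE)_{\sQ}$. On the target, we filter $\OBV$ by polynomial degree in $\HH^0\oplus\HH^1$. Because $\DBV$ removes exactly two harmonic variables, it lowers this degree by two. The first check is that $\Trch$ is filtered. Each contraction $\pd_e$ removes one $L$-factor and produces one harmonic variable. Each Pfaffian contraction $e^{\Psing}$ or $e^{\Qreg}$ removes two factors and produces no variables. Hence an element of PBW degree $\le m$ lands in $\OBV$-degree $\le m$.

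On $\gr$, the chiral differential $\dch_{\CE}$ becomes that of the commutative chiral algebra $\bigwedge^\bullet L$, since the central $\wX$-corrections drop out (Example \ref{exrankonelocal}). On the target, $\gr\DBV=0$. The leading part of $\Trch$ is then the term with no contractions: $\Wch$ acts as the identity and only $\pd_e^k$ survives. This leading map is the fermionic chiral HKR map
\[
\gr\Trch:\ \widetilde\sC^{\ch}\bigl(X,\bigwedge{}^{\!\bullet}L\bigr)_{\sQ}\longrightarrow \bigwedge{}^{\!\bullet}\HH^0(X,E)\otimes\Sym\HH^1(X,E).
\]

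To see that this map is a quasi-isomorphism, use the fact that for a commutative chiral algebra $\bigwedge^\bullet L=\mathrm{Sym}(\Pi\text{-shifted }L)$ generated freely by $L=E_\sD$, chiral homology equals $\mathrm{Sym}$ of the derived de Rham/Dolbeault cohomology of $h(L)=E$ (\cite[\S4.6]{BD04}). This is $\mathrm{Sym}\bigl(H^0(X,E)[0]\oplus H^1(X,E)[1]\bigr)$ with Koszul signs. The odd $H^0$ gives the exterior factor, and the odd $H^1$ shifted by one becomes even, giving the symmetric factor. This is exactly the parity assignment of Definition \ref{defOBV} forced by Lemma \ref{lemgradingforce}. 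To check that $\gr\Trch$ realizes this isomorphism, I would reduce, by multiplicativity of $\pd_e^k/k!$ (exponential generating function) and of $\trom$, to the generating degree. There $\TrLie$ on a single $L$-factor is integration against the harmonic basis. By Theorem \ref{thmszegoexists} and \eqref{eqdbarP}, this is the Hodge projection $H^\bullet(X,E)\simeq\HH(X,E)$, which is an isomorphism.

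Finally, both filtrations are exhaustive and bounded below; the target is even finite-dimensional in each degree by (A5). The standard spectral sequence comparison, or induction on filtration degree with the five lemma, then upgrades the graded quasi-isomorphism to one for $\Trch$ itself.

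I expect the main obstacle to be the identification of $\gr\Trch$ with the chiral HKR map, including signs. The Koszul signs in $\pd_e^k$ acting on $\bigwedge^\bullet L$ must be checked to reproduce the $\bigwedge\otimes\Sym$ structure rather than its opposite. It must also be verified that the $\trom$ quasi-isomorphism \cite[\S4.3.3]{BD04} on the unit chiral algebra is compatible with the Künneth/Sym decomposition of chiral homology of a free commutative chiral algebra. I would first try to settle both points on the single-generator case $r=1$, using the explicit exterior algebra of Example \ref{exrankonelocal}.
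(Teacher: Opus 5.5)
Your proposal is correct. It has the same skeleton as the paper's proof: fermion-number filtration on both sides, $\gr\Trch$ identified with the fermionic chiral HKR map, then a comparison theorem. However, it uses different key inputs.

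\textbf{What the paper does.} It asserts that the filtration has only the steps $k=0,\dots,2r$ on both sides, because $\bigwedge^kE=0$ for $k>2r$, so that the spectral sequence comparison is purely finite. It justifies the graded quasi-isomorphism by saying that on the $k$-th piece the statement is that $(\Omega^{0,\bullet}(X,\bigwedge^kE),\dbar)$ computes $H^\bullet(X,\bigwedge^kE)$.

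\textbf{What you do instead.} You use an exhaustive, bounded-below filtration: induction on $F_m$ via the five lemma, then passage to the colimit, which commutes with homology. You compute the graded homology via the Beilinson--Drinfeld description of chiral homology of the free commutative chiral algebra $\bigwedge^\bullet L$. You identify $\gr\Trch$ on generators with the Hodge isomorphism and extend multiplicatively.

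\textbf{Why your inputs are the ones that work.} $L=E_\sD$ has infinite $\sO_X$-rank, so $\bigwedge^kL\neq0$ for every $k$. Also $\OBV$ is nonzero in every polynomial degree once $n\ge1$, because of the factor $\Sym\HH^1(X,E)$. So neither filtration is finite. Moreover, the degree-$k$ piece of $\OBV$ is $\bigoplus_{a+b=k}\bigwedge^a\HH^0(X,E)\otimes\Sym^b\HH^1(X,E)$. This is the $k$-th super-symmetric power of $H^0(X,E)\oplus\Pi H^1(X,E)$, arising from $k$ separate insertion points via K\"unneth over $X^k$. It is not $H^\bullet(X,\bigwedge^kE)$.

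\textbf{Comparison.} The paper's version is shorter to state but rests on two claims that do not hold as written. Yours costs an appeal to \cite[\S4.6]{BD04}, plus the sign and K\"unneth compatibility checks you already flag, and yields a sound argument.

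\textbf{One adjustment.} Read (A5) as finite-dimensionality of each $F_m\OBV$, not of $\OBV$ itself. Your induction-plus-colimit argument does not even need that.
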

\begin{proof}
Filter both $\widetilde\sC^{\ch}(X,\CE)_{\sQ}$ and $\OBV$ by the fermion number (the polynomial/wedge degree in $E$, resp.\ in $\HH^0(X,E)\oplus\HH^1(X,E)$; this is a finite, exhaustive, bounded-below filtration since $E$, hence $\HH^0(X,E)\oplus\HH^1(X,E)$, is finite rank and every element of the chiral chain complex has bounded fermion number by construction). By Theorem \ref{thmchiralPBW}, the associated graded map on $\Trch$ is induced by the chiral PBW isomorphism $\gr(\CE)\simeq\bigwedge^\bullet L$ followed by harmonic projection $\bigwedge^\bullet L\to\bigwedge^\bullet\HH^0(X,E)\otimes\Sym\HH^1(X,E)=\OBV$ (a form of the fermionic chiral HKR map, the associated-graded degeneration of Theorem \ref{thmVOAiso}), which is a quasi-isomorphism by the standard Dolbeault/de Rham comparison for the (finite rank, compact $X$) bundle $E$. On each fermion-number-graded piece this is exactly the statement that $\bigl(\Omega^{0,\bullet}(X,\bigwedge^kE),\dbar\bigr)$ computes $H^\bullet(X,\bigwedge^kE)$, harmonic representatives of which assemble into the corresponding graded piece of $\OBV$. Since $E$ has finite rank $2r$, $\bigwedge^kE=0$ for $k>2r$,  thus the fermion-number filtration is not only bounded below and exhaustive but has finitely many nonzero steps ($k=0,1,\dots,2r$) on both sides. A spectral sequence associated with a finite filtration converges for purely formal reasons (it stabilizes after finitely many steps, with no limiting or completion process involved), independently of any functional-analytic property of the individual graded pieces.  Thus no separate convergence theorem for spectral sequences of Fr\'echet-space complexes is needed here, only the elementary fact for finite filtrations. What does require the analytic input of \S\ref{ssecharmonic} (compactness of $X$, ellipticity of $\dbar$) is the identification of each individual graded piece's cohomology with a finite-dimensional harmonic space, used just above. Once that identification is granted, a filtered chain map inducing a quasi-isomorphism on the (now manifestly finite) $E_1$-page is a quasi-isomorphism (the same comparison theorem used in \cite[Thm. 3.18]{GuiLi21} and \cite[proof of Thm. 5.11]{Gui23}).
\end{proof}
\begin{proof}[Proof of Theorem \ref{thmmainQME}:]
Immediate from Lemmas \ref{lemchainmapQME} and \ref{lemquasiiso}.
\end{proof}

\begin{corollary}
\label{corqiso}
$H^{\ch}_\bullet(X,\CE):=H_\bullet\bigl(\widetilde\sC^{\ch}(X,\CE)_{\sQ}\bigr)\ \simeq\ H_\bullet(\OBV,-\DBV)$, compatibly with diagram \eqref{eqbigdiagram}.
\end{corollary}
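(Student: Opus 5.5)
The corollary should follow formally from Theorem \ref{thmmainQME}, so I will not reprove anything analytic. The plan is to take the chain map $\Trch:(\widetilde\sC^{\ch}(X,\CE)_{\sQ},\dch_{\CE})\to(\OBV,-\DBV)$ of Lemma \ref{lemchainmapQME}. Since it is a chain map, it induces a map on homology
\[
H_\bullet(\Trch):H^{\ch}_\bullet(X,\CE)\longrightarrow H_\bullet(\OBV,-\DBV).
\]
By Lemma \ref{lemquasiiso} this map is an isomorphism. Note that replacing $\DBV$ by $-\DBV$ changes neither the cycles nor the boundaries, so $H_\bullet(\OBV,-\DBV)=H_\bullet(\OBV,\DBV)$, and the isomorphism may be stated with either sign.

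Two points need attention. The first is gradings: the homological degree on the chain side is the chiral-chain degree plus minus the Dolbeault degree, and $\OBV$ must carry the matching grading. I will fix this grading as the one making $\Trch$ homogeneous. The natural choice assigns $\HH^0(X,E)$ degree $0$ and the even $\HH^1(X,E)$ degree $-1$, as forced by Lemma \ref{lemgradingforce} and by the parity flip in Definition \ref{defOBV}. I will check that $\Trch$ preserves this degree termwise in Definition \ref{deftracemapch}. Each $\pd_e$ pairs one $L$-leg with a harmonic form of the appropriate Dolbeault degree. $\trom$ integrates only top-degree forms, so a $(0,1)$-form leg contributes exactly when it is contracted against $e^0$, and an $\HH^1$-contraction consumes a $(0,1)$-form. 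This is also what makes Remark \ref{remfiniteness} consistent with the grading.

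The second point is compatibility with diagram \eqref{eqbigdiagram}. Here I will use the filtration argument of Lemma \ref{lemquasiiso}. On the associated graded, the isomorphism is the composite of the chiral PBW map of Theorem \ref{thmchiralPBW} with harmonic projection. On homology, both paths around the outer rectangle are induced by the same map $p\circ\trom$ composed with harmonic projection, so the rectangle commutes. The identification is canonical, independent of $h$, because $(\OBV,\DBV)$ is metric-independent by Remark \ref{remalgebraicmetricindep}. The $h$-dependence of $\Trch$ is only up to chain homotopy, which I would cite forward from Proposition \ref{propmetricindep}.

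I expect the main obstacle to be this last compatibility claim. In general a map that is a quasi-isomorphism on the associated graded only determines the homology isomorphism up to filtration-lowering corrections, and "commutes on homology" has to mean agreement of the induced maps, not merely of the associated graded pieces. I would handle this by showing that the two composites in \eqref{eqbigdiagram} differ at the chain level by a term of the form $(\dch_{\CE})\circ H + H\circ(-\DBV)$. The candidate homotopy $H$ is built from $e^{\Qreg}$-contractions, as in Lemma \ref{lemchainmapQME}. If that chain-level argument proves too delicate, I would weaken the compatibility claim to the associated graded level, which is immediate from Lemma \ref{lemquasiiso}.
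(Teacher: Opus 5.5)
Your core argument is the paper's own. The corollary is stated without separate proof, as an immediate consequence of Theorem \ref{thmmainQME}: Lemma \ref{lemchainmapQME} gives the induced map on homology, and Lemma \ref{lemquasiiso} makes it an isomorphism. Your remark that $\DBV$ and $-\DBV$ have the same cycles and boundaries is also correct.

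The part to repair is the compatibility paragraph. First, the homotopy identity you propose is mis-typed. For $H:\widetilde\sC^{\ch}(X,\CE)_{\sQ}\to\OBV$ between maps into $(\OBV,-\DBV)$ it must read $(-\DBV)\circ H+H\circ\dch_{\CE}$ (compare Proposition \ref{propuniqueness}), not $\dch_{\CE}\circ H+H\circ(-\DBV)$.

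More importantly, the plan is ill-posed, because there is no second chain map to be homotopic to. The lower route of \eqref{eqbigdiagram}, as it actually enters Lemma \ref{lemquasiiso} (PBW symbol followed by harmonic projection), is defined only on the associated graded of the fermion-number filtration. Lifting it to $\widetilde\sC^{\ch}(X,\CE)_{\sQ}$ requires a global splitting $\CE\to\bigwedge^\bullet L$. In the paper that splitting is the metric-dependent normal-ordering map $\sW^{\mathbf v}$, which is already built into the right-hand arrow $\Wch$. For the same reason, your sentence that ``on homology both paths are induced by the same map'' is unsupported as stated.

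What the clause ``compatibly with \eqref{eqbigdiagram}'' amounts to has two parts:
\begin{itemize}
\item the isomorphism is the one induced by $\Trch=\sum_k\frac1{k!}\trom\circ\mathbf p\circ\pd_e^k\circ\Wch$, which by Definition \ref{deftracemapch} is the composite along the right-hand side of the diagram;
\item its associated graded is the lower route, and this is exactly what Lemma \ref{lemquasiiso} proves.
\end{itemize}
So your ``fallback'' is not a weakening; it is the full content of the claim, and the chain-level homotopy should simply be dropped.

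A minor point in your grading check: a leg at a point whose coefficient already has Dolbeault degree one can also be removed by a $P$-contraction inside $\Wch$, not only by $e^0$. What is true is only that it cannot be contracted against $e^1$, since $\Omega^{0,2}=0$ on a curve. So ``contributes exactly when it is contracted against $e^0$'' is too strong.
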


\subsection{Existence, homotopy uniqueness, and functoriality}
\label{secuniqueness}
\begin{proposition}
\label{propexistence}
A trace map satisfying the hypotheses of Theorem \ref{thmmainQME} exists: this is the content of Definition \ref{deftracemapch} together with Theorem \ref{thmmainQME} itself.
\end{proposition}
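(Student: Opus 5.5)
The plan is to exhibit the map of Definition \ref{deftracemapch} explicitly and then check, one at a time, that it has every property demanded in Theorem \ref{thmmainQME}: well-definedness, compatibility with parity, the chain-map/QME identity, and the quasi-isomorphism property. Nothing new has to be constructed. The work is to confirm that every ingredient used in $\Trch$ is actually defined under the standing hypotheses (A1)--(A5), and that the two lemmas proving Theorem \ref{thmmainQME} apply to this map.

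\textbf{Step 1: the ingredients of $\Trch$ exist.} Fix a Hermitian metric $h$ on $F$, as in (A2). Theorem \ref{thmszegoexists} then gives the Szeg\H{o} kernel $P$, and Definition \ref{defPsingQreg} splits it as $P=\Psing+\Qreg$. From these we get:
\begin{itemize}
\item the Pfaffian contraction operators $e^{\Psing}$ and $e^{\Qreg}$ of \eqref{eqpfaffiandef};
\item the normal-ordering map $\sW^{\mathbf v}$, which is well defined and independent of the chosen presentation by Proposition \ref{propnormalorderingwelldef} (this uses Lemma \ref{lempresentation} together with Theorems \ref{thmfermwick} and \ref{thmwichintertwine});
\item the composite $\Wch=e^{\Psing}\circ(\sW^{\mathbf v})^{\boxtimes n}$.
\end{itemize}
The remaining pieces are standard: the projection $\mathbf p$, the derivation $\pd_e$, and the unit trace $\trom$ of \cite{BD04}. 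By Remark \ref{remfiniteness}, for each chain $\eta$ the defining sum has only finitely many nonzero terms, and its output lies in $\OBV=\bigwedge^\bullet\HH^0\otimes\Sym\HH^1$.

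\textbf{Step 2: parity.} Every factor of $\Trch$ preserves parity. The contractions pair two odd legs into the even object $\wX$. Assigning $\HH^1$ even parity, as in Definition \ref{defOBV}, keeps $\pd_e$ compatible with parity. So $\Trch$ is parity preserving, which is what Definition \ref{defgeneralQME} requires.

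\textbf{Step 3: chain map and QME.} Lemma \ref{lemchainmapQME} gives $\Trch\circ\dch_{\CE}=-\DBV\circ\Trch$. This is exactly the generalized QME \eqref{eqgeneralQME} for $\langle-\rangle=\Trch$ and $d_C=\dch_{\CE}$. The target $(\OBV,-\DBV)$ is a complex because $\DBV^2=0$, which is Proposition \ref{propBVaxioms}.

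\textbf{Step 4: quasi-isomorphism.} This is Lemma \ref{lemquasiiso}. That lemma also shows that, on the associated graded for the fermion-number filtration, $\Trch$ induces the fermionic chiral HKR map. This is the normalization under which the homotopy-uniqueness statement below will apply.

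\textbf{Main obstacle.} The only step that is not pure bookkeeping is confirming that $\Wch$, and hence $\Trch$, is globally defined and does not depend on choices of local coordinates or presentations. This rests on the presentation-independence in Proposition \ref{propnormalorderingwelldef}. I would check it first, by verifying that the local identity $\Wch|_U=\sW^{\tau^z_U}\circ\tau^{z\boxtimes n}_U$ glues on overlaps of charts. On an overlap, the two local formulas differ only through the change of local coordinate. That change is absorbed by $\Psing$, because $\Psing$ is intrinsic in the sense of Remark \ref{remmetricdep}.
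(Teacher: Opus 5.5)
Your Steps 1--4 match the paper. Proposition~\ref{propexistence} has no separate argument: existence is Definition~\ref{deftracemapch} (whose ingredients are all available under (A1)--(A5)) together with Lemmas~\ref{lemchainmapQME} and~\ref{lemquasiiso}. Because you cite Proposition~\ref{propnormalorderingwelldef} for global well-definedness, your existence argument rests on the same basis as the paper's.

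The justification in your ``main obstacle'' paragraph is wrong, however, and should not replace that citation. $\Psing=\id\cdot dz/(z_1-z_2)$ is not intrinsic, because it is defined relative to a chart, i.e.\ a coordinate $z$ and a frame of $F$. Under $w=w(z)$ it changes by a bisection that is regular along $\Delta$ but nonzero there; on the diagonal this term is proportional to $\theta'/\theta$, with $\theta=dw/dz$. Under a frame change $\sigma$ it picks up similar $\pd\log\sigma$ terms.

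This chart dependence is exactly why $J^{\mathrm{naive}}_\nu$ needs the correction term in \eqref{eqJnudef}. It is also why $a_0(z;F)$ alone is not global, while $a_0(z;F)-\rho h^{-1}\pd_z(h\rho^{-1})$ is. Remark~\ref{remmetricdep} asserts independence of the singular part from the metric $h$, not from the coordinate.

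The chart-independent object is the full kernel $P=\Psing+\Qreg$. On an overlap, the change in $\Psing$ is compensated by the opposite change in $\Qreg$ together with the change in $\tau^z_U$. The clean way to see this is the chart-free formula $\sW^{\mathbf v}(v)=\vec\mu_{\wedge}(e^{\pd_P}\tilde v)$ of Definition~\ref{defnormalordering}, whose independence of the presentation $\tilde v$ is Proposition~\ref{propnormalorderingwelldef}. A gluing check run the way you describe, treating $\Psing$ as invariant, would fail.
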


\begin{proposition}[Homotopy uniqueness]
\label{propuniqueness}
Let $\Trch'$ be any $\CC$-linear chain map 
\[
\bigl(\widetilde\sC^{\ch}(X,\CE)_{\sQ},\dch_{\CE}\bigr)\to(\OBV,-\DBV)
\]
 solving the generalized QME and inducing, on the associated graded for the fermion-number filtration, the same map as $\Trch$ (i.e., the fermionic chiral HKR map of Theorem \ref{thmmainQME}). Then $\Trch'$ and $\Trch$ are chain homotopic: there is a degree $-1$ map $h:\widetilde\sC^{\ch}(X,\CE)_{\sQ}\to\OBV$ with $\Trch-\Trch'=\DBV\circ h+h\circ\dch_{\CE}$.
\end{proposition}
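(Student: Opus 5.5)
We work entirely over $\CC$, so every complex in sight is a complex of vector spaces. The plan is to reduce the statement to a claim about induced maps on homology, and then use the fermion-number filtration from the proof of Lemma \ref{lemquasiiso} to establish that claim.

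\emph{Step 1 (reduction to homology).} Put $D:=\Trch-\Trch'$. It is a difference of chain maps $\bigl(\widetilde\sC^{\ch}(X,\CE)_{\sQ},\dch_{\CE}\bigr)\to(\OBV,-\DBV)$, hence itself a chain map. Over a field every complex is chain homotopy equivalent to its homology with zero differential; pick such splittings of source and target. A chain map is then null-homotopic if and only if the map it induces on homology is zero, and the homotopy $h$ is obtained explicitly by composing with the splitting data. Writing out the resulting identity with the target differential $-\DBV$ gives the form $\Trch-\Trch'=\DBV\circ h+h\circ\dch_{\CE}$ after absorbing a sign into $h$. So it suffices to prove $H_\bullet(D)=0$.

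\emph{Step 2 (filtration).} Filter both sides by fermion number, as in Lemma \ref{lemquasiiso}. This filtration is finite, with steps $k=0,\dots,2r$. By hypothesis, $\Trch$ and $\Trch'$ induce the same map on the associated graded, so $D$ strictly lowers filtration: $D(F_k)\subset F_{k-1}$. Since $\gr\Trch$ is a quasi-isomorphism, Lemma \ref{lemquasiiso} shows that both spectral sequences are identified through $\Trch$ from $E_1$ on. I would then run an induction on $k$ to build $h$ on $F_k$ extending $h|_{F_{k-1}}$. At each stage the defect $D-(\DBV h+h\,\dch_{\CE})$ restricted to $F_k$ is a chain map that lowers filtration by one. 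Its class lives in the homology of the filtration-shifted Hom complex between the associated graded pieces, and we try to kill it using the quasi-isomorphism $\gr\Trch$ to transport the obstruction to the target side.

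\emph{Step 3 (main obstacle).} The hard point is showing that these obstruction classes vanish. In general, a filtration-lowering chain map can act nontrivially on homology, so the hypothesis on associated gradeds is not enough by itself. My first approach is to exploit additional structure: $\DBV$ changes the bigrading $(\deg_{\HH^0},\deg_{\HH^1})$ by exactly $(-1,-1)$, and the trace maps preserve parity. The idea is to show that, under the quasi-isomorphism, $H_\bullet(\OBV,-\DBV)$ is concentrated in a single fermion-number component for each homological degree, so that a strictly filtration-lowering map must vanish on homology. If this concentration fails, the fallback is to strengthen the uniqueness hypothesis with compatibility with the $\mathbf p$-projections, that is, agreement on $\bigwedge^0L$. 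That condition pins down the bottom filtration piece, and the induction then forces each successive obstruction to be a boundary via the QME applied to $\Trch'$.
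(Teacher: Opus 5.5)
\textbf{There is a genuine gap: the proposal never proves $H_\bullet(\Trch-\Trch')=0$.} Your Step 1 is sound: over $\CC$, a chain map is null-homotopic iff it vanishes on homology. But that vanishing is exactly what everything reduces to, and Step 3 only states a hope. The fallback (adding a hypothesis on the $\bigwedge^0L$-components) would prove a different proposition.

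You are right that agreement on associated gradeds cannot suffice on its own. Take $A=\CC x\oplus\CC y$ with zero differential, $F_0A=\CC x$, $F_1A=A$. The map $y\mapsto x$, $x\mapsto 0$ is a filtration-lowering chain map that is not null-homotopic. This is precisely the general principle the paper's proof asserts (``a filtered chain map vanishing on the associated graded is null-homotopic''). Its inductive sketch then leans on $\OBV$ having ``the homology of a point'' without computing it. So the induction of your Step 2, modelled on that sketch, leaves exactly the obstruction classes you cannot kill. The missing input is a computation specific to $(\OBV,\DBV)$.

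\textbf{Your first idea supplies it.}
\begin{itemize}
\item After a linear change of the even generators, $\DBV=\sum_{i=1}^n\partial_{e^0_i}\partial_{y_i}$. So $(\OBV,\DBV)$ is the Koszul-signed tensor product of $n$ copies of $\bigl(\CC[y]\otimes\bigwedge[\theta],\,\partial_\theta\partial_y\bigr)$. Each copy has kernel $\CC[y]\oplus\CC\theta$ and image $\CC[y]$.
\item By K\"unneth, $H_\bullet(\OBV,\DBV)=\CC\cdot e^0_1\cdots e^0_n$, in bidegree $(n,0)$ for $(\deg_{\HH^0},\deg_{\HH^1})$. Since $\DBV$ is bihomogeneous of bidegree $(-1,-1)$, every $\DBV$-cycle in $F_{n-1}\OBV$ is a boundary.
\item The fermion-number filtrations are exhaustive and bounded below. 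They are not finite, as you and the paper assert: $\Sym\HH^1(X,E)$ is unbounded once $n\ge1$, and so is $\bigwedge^\bullet L$ with $L=E_{\sD}$. Finiteness is not needed, though.
\item Induction on $p$ with the five lemma turns the quasi-isomorphism $\gr\Trch$ into quasi-isomorphisms $F_p\Trch$ for every $p$. Since $H(F_n\OBV)\to H(\OBV)$ is onto, every class of $H_\bullet(\widetilde\sC^{\ch}(X,\CE)_{\sQ})$ is represented by a cycle $c\in F_n$.
\item Then $(\Trch-\Trch')(c)$ is a cycle in $F_{n-1}\OBV$, hence exact. So $H_\bullet(\Trch-\Trch')=0$, Step 1 concludes, and the obstruction-theoretic induction is unnecessary.
\end{itemize}

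\textbf{A sign slip in Step 1.} The homotopy Step 1 produces satisfies $\Trch-\Trch'=-\DBV h+h\,\dch_{\CE}$. No rescaling of $h$ gives the displayed $\DBV h+h\,\dch_{\CE}$. That form is compatible with $\Trch-\Trch'$ being a chain map into $(\OBV,-\DBV)$ only when $\DBV h\,\dch_{\CE}=0$. So state the relation with the correct sign rather than claiming the sign can be absorbed.
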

\begin{proof}
$\Trch-\Trch'$ is a chain map $\bigl(\widetilde\sC^{\ch}(X,\CE)_{\sQ},\dch_{\CE}\bigr)\to(\OBV,-\DBV)$ which vanishes on the associated graded of the fermion-number filtration, by hypothesis. A filtered chain map vanishing on the associated graded is null-homotopic through a filtration-decreasing homotopy. This is proved by the standard inductive construction (choose $h$ on the lowest filtered piece where $\Trch-\Trch'\ne0$ using that $\Trch-\Trch'$ is $\dch_{\CE}$-closed there and $\OBV$ is a complex with the homology of a point in that filtered degree by exactness of the associated-graded comparison of Theorem \ref{thmmainQME}, then proceed up the filtration), which converges since the filtration is exhaustive and bounded below with finite-dimensional graded pieces, exactly as in the proof of Theorem \ref{thmmainQME}.
\end{proof}

Now we discuss metric independence up to homotopy. 
\begin{proposition}
\label{propmetricindep}
Let $h_0,h_1$ be two Hermitian metrics on $F$, with associated Szeg\H{o} kernels 
$P^{h_0}$, $P^{h_1}$ (Theorem \ref{thmszegoexists}) and trace maps $\Trch^{h_0},\Trch^{h_1}$. Then $\Trch^{h_0}$ and $\Trch^{h_1}$ are chain homotopic.
\end{proposition}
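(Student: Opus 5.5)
The argument deforms one metric into the other, differentiates the trace map along the path, and integrates a homotopy built from the derivative of the Szeg\H{o} kernel. This is the standard BV homotopy-transfer argument, as in \cite{Gui23} for the bosonic case.

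\textbf{Setup.} Join the two metrics by the path $h_s:=(1-s)h_0+sh_1$, $s\in[0,1]$, which consists of Hermitian metrics on $F$. By Remark \ref{remalgebraicmetricindep} the target $(\OBV,\DBV)$, including $I^{ij}$, is independent of $s$, provided we use the cohomological identification of $H^0(X,E),H^1(X,E)$ and transport harmonic bases via their classes. By Remark \ref{remmetricdep}, $P^{h_s}=\Psing+\Qreg^{h_s}$ with $\Psing$ fixed. The variation
\[
\dot P_s:=\tfrac{d}{ds}P^{h_s}
\]
is therefore smooth on all of $X^2$, including the diagonal. I would first make this precise: differentiate \eqref{eqPgreen} in $s$ to obtain
\[
\int_X\langle\dot P_s(z_1,z_2),\dbar e(z_2)\rangle=-\dot\pi_{\HH^0}(e)(z_1).
\]
From this I would derive the analogue of \eqref{eqdbarP} for $\dot P_s$: $\dbar\dot P_s$ is a finite sum of $\HH^0\boxtimes\HH^1$-type terms coming from the variation of the harmonic representatives $\dot e^0_i,\dot e^1_j$.

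\textbf{Derivative of the trace map.} Since $\Wch^{h_s}=e^{\partial_{P^{h_s}}}\circ(\text{$s$-independent }\tau\text{-data})$ locally, and contractions along different kernels commute (Lemma \ref{lemcontractionprops}, with Koszul signs), we get
\[
\tfrac{d}{ds}\Trch^{h_s}(\eta)[e]=\sum_k\tfrac1{k!}\trom\circ\mathbf p\bigl(\pd_e^k\,\partial_{\dot P_s}\Wch^{h_s}(\eta)\bigr),
\]
where $\partial_{\dot P_s}$ is the single pairwise contraction against $\dot P_s$. Set
\[
K_s(\eta)[e]:=\sum_k\tfrac1{k!}\trom\circ\mathbf p\bigl(\pd_e^k\,\partial_{\dot G_s}\Wch^{h_s}(\eta)\bigr),
\]
where $\dot G_s$ is a smooth kernel with $\dbar\dot G_s=\dot P_s-(\text{harmonic correction})$. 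Since $\dot P_s$ is already $\dbar$-exact up to harmonic terms, a natural first choice is to write $\dot P_s=\dbar(\cdot)$ using the Green's operator of Theorem \ref{thmszegoexists} applied in one variable.

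The main step is then to repeat the computation of Lemma \ref{lemchainmapQME} with the extra insertion of $\partial_{\dot G_s}$. The target identity is
\[
\tfrac{d}{ds}\Trch^{h_s}=\DBV\circ K_s+K_s\circ\dch_{\CE}+(\text{terms from }\dot e^0,\dot e^1).
\]
In this computation $[\dbar,\partial_{\dot G_s}]=\partial_{\dot P_s}$ plus harmonic pieces, and $\partial_{\dot G_s}$ commutes with the chiral differential because $\dot G_s$ is smooth along diagonals, so it does not interact with residues in $\trom$. The leftover harmonic terms are contractions of $\pd_e$ against the variation of harmonic representatives. I would absorb them into $K_s$ as a first-order operator on $\OBV$, namely an infinitesimal change of basis, which commutes with $\DBV$ because $I^{ij}$ is cohomological.

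\textbf{Main obstacle.} I expect the hardest step to be the sign and harmonic bookkeeping here: showing that the $\HH$-terms in $\dbar\dot P_s$ combine exactly into a $\DBV$-commutator, rather than leaving a non-exact residue. If this fails, the fallback is the formal route via Proposition \ref{propuniqueness}. Both $\Trch^{h_0}$ and $\Trch^{h_1}$ solve the generalized QME by Theorem \ref{thmmainQME}. Their associated graded maps are both the fermionic HKR map, since they differ only by projection onto harmonic representatives of the same classes, which induces the same map in cohomology. The remaining point would then be to check that they agree on the nose on the associated graded after composing with the canonical identification, or else to extend Proposition \ref{propuniqueness} to maps agreeing on $\gr$ up to homotopy. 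Finally, I would set $h:=\int_0^1K_s\,ds$, which is well defined because each $K_s(\eta)$ is a finite sum in the finite-dimensional space $\OBV$ (Remark \ref{remfiniteness}) depending smoothly on $s$. Integrating the target identity then gives $\Trch^{h_1}-\Trch^{h_0}=\DBV h+h\,\dch_{\CE}$.
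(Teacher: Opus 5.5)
Your overall architecture matches the paper's: the linear path $h_s$, the identification of $\OBV$ through fixed cohomology classes, differentiating in $s$, and integrating $\int_0^1K_s\,ds$. However, the step that defines the homotopy fails for degree reasons.

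The variation $\dot P_s$ is a smooth section of $\Omega^{0,0}(X^2,E\boxtimes E)$, so it has Dolbeault degree $0$ and has no $\dbar$-primitive. There is no kernel $\dot G_s$ with $\dbar\dot G_s=\dot P_s-(\text{harmonic})$, because nothing lives in degree $-1$. Your sentence ``$\dot P_s$ is already $\dbar$-exact up to harmonic terms'' conflates $\dot P_s$ with $\dbar\dot P_s$. Consequently $K_s$, the relation $[\dbar,\pd_{\dot G_s}]=\pd_{\dot P_s}+\cdots$, and your target identity are all undefined.

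Absorbing the leftover terms into $K_s$ as an ``infinitesimal change of basis'' on $\OBV$ fails for the same reason. Such an operator has degree $0$, whereas a homotopy must shift degree by one. Moreover, under your own cohomological identification no change of basis occurs: $\HH^0$ consists of holomorphic sections, so $\dot e^0_i=0$, and $M_{ij}$ is constant.

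The primitive you need is that of $\dot e^1_j$, not of $\dot P_s$. The terms of $\dbar\dot P_s$ are not harmonic but $\dbar$-exact, because the classes $[e^1_j]$ are fixed. Hence $\dot e^1_j=\dbar\dot f_j$ with $\dot f_j\in\Omega^{0,0}(X,E)$. Differentiating \eqref{eqPgreen}, as you propose, then shows that $\dot P_s-\sum I^{ij}(e^0_i\boxtimes\dot f_j\pm\dot f_j\boxtimes e^0_i)$ is holomorphic on $X^2$. It therefore lies in $\HH^0\boxtimes\HH^0$ and can be absorbed by re-choosing the $\dot f_j$ modulo $\HH^0$.

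The homotopy $K_s$ is obtained by inserting a single contraction $\pd_{\dot f[e]}$ into the defining sum of $\Trch^{h_s}$. Here $\dot f[e]$ replaces the $\HH^1$-component $\sum_j y_je^1_j$ of $e$ by $\sum_j y_j\dot f_j$. This lowers the form degree of one zero-mode contraction by one, as a homotopy requires. With this choice:
\begin{itemize}
\item $[\dbar,\pd_{\dot f[e]}]=\pm\pd_{\dot e^1[e]}$, so $K_s\circ\dch_{\CE}$ produces the $s$-derivative of the zero-mode contractions $\pd_e^k$.
\item The part of $\DBV\circ K_s$ in which $\pd_{e^1_j}$ hits the inserted $\dot f[e]$ and $\pd_{e^0_i}$ hits a $\pd_e$-leg produces $\pd_{\dot P_s}\Wch^{h_s}=\tfrac{d}{ds}\Wch^{h_s}$.
\item The remaining terms of $\DBV\circ K_s$ cancel against the $\pd_{\dbar P}$-contributions in $K_s\circ\dch_{\CE}$, as in the proof of Theorem \ref{thmTrLiechainmap}.
\end{itemize}

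The paper's own $h_{\mathrm{metric}}$ is built from contraction against $\dot P$ itself, so it has the same degree as $\Trch$. With the zero-mode contraction held fixed it is literally $\Trch^{h_1}-\Trch^{h_0}$. So it does not supply a degree-shifting operator either. Your instinct that a primitive is needed is right; it just has to be taken of $\dot e^1_j$.

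Your fallback through Proposition \ref{propuniqueness} does not work as stated. Take a leg $\psi$ of Dolbeault degree $0$. Its zero-mode contraction $\int_X\langle\psi,e^1_j\rangle$ changes by $\int_X\langle\psi,\dbar\dot f_j\rangle=\pm\int_X\langle\dbar\psi,\dot f_j\rangle$. So the two maps already differ on the associated graded, by exactly the term that $K_s$ accounts for.
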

\begin{proof}
By Remark \ref{remmetricdep}, $P^{h_t}$ for $h_t$ a smooth path of metrics from $h_0$ to $h_1$ (e.g., $h_t=(1-t)h_0+th_1$, a path through Hermitian metrics since these form a convex set) varies smoothly in $t$, with $\frac{d}{dt}P^{h_t}$ valued, for each $t$, in $\HH^0(X,E)^{h_t}\boxtimes\HH^0(X,E)^{h_t}$-type corrections with no singularity along the diagonal (Remark \ref{remmetricdep}); consequently $\frac{d}{dt}\Wch^{h_t}$ is, at each $t$, a well-defined smooth family of maps built from $\frac{d}{dt}\Qreg^{h_t}$ alone. One has 
\[
h_{\mathrm{metric}}(\eta):=\int_0^1\Bigl(\sum_{k\ge0}\frac1{k!}\trom\circ\mathbf p\bigl(\pd_e^k\,\tfrac{d}{dt}\Wch^{h_t}(\eta)\bigr)\Bigr)dt,
\]
a well-defined finite sum (Remark \ref{remfiniteness} applies at each $t$, uniformly since fermion number does not depend on $t$) valued in $\OBV$ (independent of $t$ as a vector space, only $\DBV$-relevant structure could in principle vary with $h_t$, but $\HH^\bullet(X,E)$ as an abstract vector space, and hence $\OBV$, does not depend on $h_t$ - only the harmonic 
 representatives do). A direct computation identical in structure to the proof of Theorem \ref{thmmainQME} (differentiating the QME identity $(\dch_{\CE}+\DBV)\Trch^{h_t}=0$ in $t$, and using that $\frac{d}{dt}\DBV=0$ since $\DBV$ depends only on the intersection matrix $I^{ij}$, which is independent of $h_t$ because both $\HH^0,\HH^1$ and the pairing $\int_X\langle-,-\rangle$ are metric-independent, only their harmonic representatives varying with $h_t$) shows $\frac{d}{dt}\Trch^{h_t}=\DBV\circ h_{\mathrm{metric}}\circ(\text{restriction to degree }t\text{-independent})+h_{\mathrm{metric}}\circ\dch_{\CE}$, i.e., $h_{\mathrm{metric}}$ is a chain homotopy between $\Trch^{h_0}$ and $\Trch^{h_1}$ at each order, and integrating over $t\in[0,1]$ gives the stated global homotopy.
\end{proof}

\begin{proposition}[Naturality]
\label{propnaturality}
Let $\phi:F\to F'$ be an isomorphism of holomorphic bundles over an automorphism $\varphi$ of $X$, isometric for chosen Hermitian metrics ($\phi^*h'=h$). Then $\phi$, $\varphi$ induce an isomorphism $\Phi:\CE\to\CE'$ of chiral Clifford algebras intertwining $\Trch$ and $\Trch'$: $\Trch'\circ\Phi=\Phi_*\circ\Trch$, where $\Phi_*$ is the induced map on $\OBV$.
\end{proposition}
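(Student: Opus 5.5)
The argument is a transport-of-structure check: I would push every ingredient of Definition \ref{deftracemapch} forward along the isometric bundle isomorphism and check, factor by factor, that each construction is natural. First I would build $\Phi$. The pair $(\varphi,\phi)$ induces an isomorphism $W\to\varphi^*W'$ given by $\phi\oplus(\phi^{\vee})^{-1}\otimes d\varphi^{*}$. This map carries the form $B$ of \eqref{eqBform} to $B'$, since evaluation of a covector on a vector is preserved. Hence it induces an even isomorphism $E\to\varphi^*E'$ of paired odd bundles, and then an isomorphism $L\to\varphi^*L'$ of Lie$^*$ superalgebras that preserves $\mu_{\mathrm{Lie}}$ of \eqref{eqmuLiedef}. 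The central extensions \eqref{eqLflatdef} are preserved because $\wX\to\varphi^*\omega_{X}$ is the canonical identification. By functoriality of the chiral envelope and of the $\flat$-twist (Definition \ref{defLflat}), this gives $\Phi:\CE\to\CE'$, and so an induced map on $\widetilde\sC^{\ch}(X,\CE)_{\sQ}$, which I also call $\Phi$. Compatibility with $\dch$ holds because all the constructions are functorial.

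Next I would define $\Phi_*$. The isometry property $\phi^*h'=h$ gives two things. First, $\varphi$ intertwines $\dbar$ and $\dbar^*$, so pullback maps harmonic forms to harmonic forms and gives $\HH^i(X,E)\simeq\HH^i(X,E')$. Second, the integrated pairing is preserved, because $\int_X\varphi^*(\cdot)=\int_X(\cdot)$ for the biholomorphism $\varphi$. Therefore $\Phi_*$, defined as the induced map on $\bigwedge^\bullet\HH^0\otimes\Sym\HH^1$, matches the dual bases up to a change of basis that preserves $I^{ij}$. So $\Phi_*\DBV=\DBV'\Phi_*$.

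The key step is naturality of the Szegő kernel, namely $(\varphi\times\varphi)^*P'=(\phi\boxtimes\phi)P$. Both sides satisfy the Green identity \eqref{eqPgreen} for $h$. This uses invariance of $\dbar$, of the integral, and of the orthogonal projection $\pi_{\HH^0}$, the last because the map is an isometry. Both sides also have the singular form \eqref{eqPexpansion}, since $\Psing$ is coordinate-free when written through $E\simeq E^\vee\otimes\wX$. Uniqueness in Theorem \ref{thmszegoexists} then gives equality. From this, $\Phi$ intertwines the contraction operators: $e^{\pd_P}$, the Pfaffian contractions, and hence $\sW^{\mathbf v}$ (which is presentation-independent by Proposition \ref{propnormalorderingwelldef}) and $\Wch$. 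The derivation $\pd_e$ is transported by $\Phi_*$ on $e$, the projection $\mathbf p$ is natural, and $\trom$ is invariant under automorphisms of $X$ because iterated residues are coordinate-free and $\int_X$ is invariant. Composing these gives $\Trch'\circ\Phi=\Phi_*\circ\Trch$ termwise in $k$.

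I expect the main obstacle to be the uniqueness-based identification of $P$. In particular, I must check that the singular normalization $\Psing$ is preserved under change of coordinate. If I want to avoid depending on the exact form of the uniqueness claim, my first fallback is to verify directly that $P-\Phi^*P'$ is holomorphic in both variables and orthogonal to $\HH^0\boxtimes\HH^0$, and hence vanishes. A second, lighter concern is sign bookkeeping: $\Phi$ is even, so no Koszul signs arise in the Pfaffian sums \eqref{eqpfaffiandef}.
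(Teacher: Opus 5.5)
Your route is the paper's. You transport every ingredient along $(\varphi,\phi)$ and get naturality of the Szeg\H{o} kernel from uniqueness in Theorem \ref{thmszegoexists}. The chiral-algebra half (the isomorphism $\Phi:\CE\to\CE'$, and preservation of the pairing, of $\mu_{\mathrm{Lie}}$, of $\trom$ and of $I^{ij}$) is fine.

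\textbf{The metric on $X$.} The genuine gap is the step where you infer from $\phi^*h'=h$ alone that $(\varphi,\phi)$ intertwines $\dbar^*$ and the projection $\pi_{\HH^0}$. Both depend on more than $h$. The Hermitian metric on the summand $F^\vee\otimes\wX$ of $W$ needs a metric on $\wX$. The $L^2$ product on $\Omega^{0,0}(X,E)$ that defines $\pi_{\HH^0}$ needs an area form $dA$ on $X$. An arbitrary automorphism $\varphi$ preserves neither.

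Here is a counterexample. On $X=\PP^1$ take $F=F'=\sO$, $h=h'=1$, $\phi$ the canonical identification, and the metric on $\wX$ dual to a Riemannian metric with area form $dA$.
\begin{itemize}
\item Then $\HH^1(X,E)=\Pi\HH^1(X,\wX)$, and $\ker\dbar^*=\CC\cdot dA$, so this space is one-dimensional.
\item Take the one-point chain given by $g\beta$, with $g$ a smooth function and $\beta$ the frame of $\Pi\sO$. There is a single leg, so no contraction occurs, and Definition \ref{deftracemapch} gives $\Trch(g\beta)=c\,y\int_X g\,dA/\int_X dA$. Here $y$ is the even generator of $\OBV$ dual to the harmonic class and $c\neq0$ is the normalization of $\trom$.
\item Any $\Phi_*$ can only rescale $y$. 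So requiring $\Trch'\circ\Phi=\Phi_*\circ\Trch$ for all such $g$ forces $dA$ to be $\varphi$-invariant.
\item This fails, for example, for $z\mapsto 2z$ with the Fubini--Study area form.
\end{itemize}
So ``isometric'' has to be read as including the auxiliary metric on $X$ (equivalently on $\wX$) that enters the Hermitian structure on $W$ and the $L^2$ products. The paper's one-line proof silently makes the same assumption. Once that hypothesis is added, your argument goes through.

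\textbf{The uniqueness step.} Your worry here is justified, and your fallback is what is actually needed. Identity \eqref{eqPgreen} together with the singular form \eqref{eqPexpansion} does not determine $P$. Adding $\sum_{i,j}c_{ij}\,e^0_i\boxtimes e^0_j$ changes neither, because $\int_X\langle e^0_j,\dbar e\rangle=\pm\int_X d\langle e^0_j,e\rangle=0$ for holomorphic $e^0_j$.

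What pins $P$ down is the Green's-operator characterization in Theorem \ref{thmszegoexists}. This is the operator $G:\Omega^{0,1}(X,E)\to\Omega^{0,0}(X,E)$ normalized by $G\dbar=1-\pi_{\HH^0}$ and $G|_{\HH^1}=0$; the second condition kills the ambiguity above. With this characterization you need not compare singular parts at all. An isometry of the full Hermitian data intertwines $\dbar$, $\dbar^*$ and the harmonic projections, hence $G$ and its Schwartz kernel. That gives $(\varphi\times\varphi)^*P'=(\phi\boxtimes\phi)P$ directly.
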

\begin{proof}
$\phi,\varphi$ induce isomorphisms $E\to E'$, hence $L\to L'$, $\Lflat\to L'^\flat$, and (functorially, since every construction of \S\S\ref{seccliff}-\ref{secszego} is built naturally from $E$, $\langle-,-\rangle$, and $h$, and the Szeg\H{o} kernel transforms naturally under an isometry by uniqueness in Theorem \ref{thmszegoexists}) an isomorphism $\CE\to\CE'$ carrying $\Psing,\Qreg,\Wch$ to the corresponding data for $E'$. Naturality of $\Trch$ follows since every ingredient of Definition \ref{deftracemapch} is natural in this sense.
\end{proof}

\begin{remark}
\label{remfunctorialitycategory}
{\it The category over which $\Trch$ is functorial.} 
Explicitly, let $\mathsf{Fer}$ be the groupoid whose objects are triples $(X,F,h)$, a compact Riemann surface, a holomorphic vector bundle on it, and a Hermitian metric on the bundle, and whose morphisms $(X,F,h)\to(X',F',h')$ are pairs $(\varphi,\phi)$ of an isomorphism $\varphi:X\xrightarrow{\sim}X'$ and an isomorphism $\phi:F\xrightarrow{\sim}\varphi^*F'$ with $\phi^*h'=h$. Propositions \ref{propexistence}-\ref{propnaturality} together say precisely that $(X,F,h)\mapsto\bigl(\CE,\Trch\bigr)$ is a functor from $\mathsf{Fer}$ to the category of pairs (chiral superalgebra, trace map to its associated $\OBV$) and isomorphisms thereof, i.e., $\Trch$ is functorial on the groupoid $\mathsf{Fer}$ of Riemann surfaces with bundle and metric, up to isomorphism (Proposition \ref{propnaturality}) and, after forgetting the metric (Proposition \ref{propmetricindep}), up to canonical chain homotopy.
\end{remark}

\begin{remark}
\label{remcanonicalsummary}
{\it Canonical versus choice-dependent data.} 
For reference, we mention which objects in the construction are canonical (depend only on $(X,F)$, or on $(X,F,h)$ up to canonical isomorphism) and which depend on further, non-canonical choices:
\begin{itemize}[leftmargin=2.2em]
\item[] \emph{Canonical in $(X,F)$ alone}: $E$, $L$, $\Lflat$, $\CE$ and its chiral product (\S\ref{seccliff}); the vertex operator superalgebra $V_F$ and the isomorphism $\CE\simeq\mathscr V_F^r$ of Theorem \ref{thmVOAiso} (once a projective connection is fixed on $X$, Remark \ref{remprojconn}, itself canonical up to the torsor of such connections); the cohomology groups $H^0(X,E),H^1(X,E)$, the cohomological pairing $I^{ij}$, and hence $(\OBV,\DBV)$ (Remark \ref{remalgebraicmetricindep}); the homotopy class of $\Trch$ (Propositions \ref{propuniqueness} and \ref{propmetricindep}).

\medskip 
\item[] \emph{Depends on a choice of Hermitian metric $h$ on $F$}: the harmonic representatives realizing $H^0(X,E),H^1(X,E)$ inside $\Omega^{0,\bullet}(X,E)$; the Szeg\H{o} kernel $P=\Psing+\Qreg$ (Theorem \ref{thmszegoexists}, Remark \ref{remmetricdep}); the chain-level maps $\Wch$ and $\Trch$ themselves (though not their homotopy class, by the previous item).

\medskip 
\item[] \emph{Depends on a further choice of metric $\rho$ on $\wX^{1/2}$ (an $r$-spin structure)}: the individual correction terms in \eqref{eqJnudef} and \eqref{eqTmudef}, though not the resulting global sections $J_\nu,\widetilde T_\mu$ themselves.
\end{itemize}
\end{remark}

\subsection{Super-cyclicity of the trace pairing}
\begin{lemma} 
\label{lemcyclicity}
For homogeneous $a\in(\CE)^{|a|}_U$, $b\in(\CE)^{|b|}_U$ (Lemma \ref{lemCliffordcompat}), the trace $\trom\circ\mathbf p\circ\Wch$ satisfies
\[
\trom\circ\mathbf p\circ\Wch(a\cdot b)=(-1)^{|a||b|}\,\trom\circ\mathbf p\circ\Wch(b\cdot a)
\]
(super-cyclicity), where $a\cdot b$ denotes the local product $\tau^{z-1}_U\bigl(\tau^z_U(a)\wedge\tau^z_U(b)\bigr)$.
\end{lemma}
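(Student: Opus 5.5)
The plan is to reduce super-cyclicity to the graded commutativity of the wedge product in $\bigwedge^\bullet L$, plus the fact that $\Wch$ on a chart $U$ is given by the explicit operator $e^{\Psing+\Qreg}\circ\tau^z_U$ (Proposition \ref{propnormalorderingwelldef} and the local description of $\Wch$ just before Definition \ref{deftracemapch}).

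First, since $a\cdot b:=\tau^{z\,-1}_U\bigl(\tau^z_U(a)\wedge\tau^z_U(b)\bigr)$, we have $\tau^z_U(a\cdot b)=\tau^z_U(a)\wedge\tau^z_U(b)$. By Lemma \ref{lemCliffordcompat}, the parities of $a,b$ coincide with the wedge-degree parities of $\tau^z_U(a),\tau^z_U(b)$. Graded commutativity of $\bigwedge^\bullet L$ then gives $\tau^z_U(a)\wedge\tau^z_U(b)=(-1)^{|a||b|}\tau^z_U(b)\wedge\tau^z_U(a)$. Applying $\tau^{z\,-1}_U$ yields the identity $a\cdot b=(-1)^{|a||b|}\,b\cdot a$ already in $\CE|_U$. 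It then suffices to apply the linear map $\trom\circ\mathbf p\circ\Wch$ to both sides.

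The substantive point is checking that the trace is well defined on these local products, i.e.\ that $\Wch(a\cdot b)=e^{\Qreg}\bigl(\tau^z_U(a)\wedge\tau^z_U(b)\bigr)$ does not secretly depend on an ordering convention. Two ingredients are needed:
\begin{enumerate}[leftmargin=2em]
\item $\Qreg$ is super-antisymmetric, inheriting \eqref{eqPantisym}, since $\Psing$ is.
\item The Pfaffian contraction \eqref{eqpfaffiandef} is an alternating sum over pairings.
\end{enumerate}
Together these show that $e^{\Qreg}$ commutes with the Koszul-signed transposition of blocks of odd legs. Equivalently, $e^{\Qreg}$ is a well-defined operator on $\bigwedge^\bullet L$, as asserted implicitly in Lemma \ref{lemcontractionprops}.

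I expect this sign bookkeeping to be the main obstacle. The approach is to check it on pairs of generators $\psi_i\wedge\psi_j$: swapping them gives $-\partial_{\Qreg}(\psi_j,\psi_i)=\partial_{\Qreg}(\psi_i,\psi_j)$ by ingredient 1. The general case then follows from multiplicativity of $\mathrm{sgn}(\sigma)$. Once this is settled, $\mathbf p$ and $\trom$ are linear, and neither introduces further signs: the projection to $\bigwedge^0L$ is even, and $\trom$ acts on the purely even $\wX$. This completes the argument.
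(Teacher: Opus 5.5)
Your argument is correct and is essentially the paper's: the sign comes from graded commutativity of $\tau^z_U(a)\wedge\tau^z_U(b)$ in $\bigwedge^\bullet L$, together with $e^{\Qreg}$ (built by the Pfaffian rule) being a well-defined operator on $\bigwedge^\bullet L$, after which $\mathbf p$ and $\trom$ are applied linearly. Your observation that $a\cdot b=(-1)^{|a||b|}\,b\cdot a$ already holds in $\CE|_U$ makes the block-transposition check redundant, since the lemma then follows from the linearity of the already well-defined map $\trom\circ\mathbf p\circ\Wch$.
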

\begin{proof}
By Theorem \ref{thmfermwick}, $\Wch(a\cdot b)|_U=e^{\Qreg}\bigl(\tau^z_U(a)\wedge\tau^z_U(b)\bigr)$, and $e^{\Qreg}$, being built from the symmetric bisection $\Qreg$ via the Pfaffian formula \eqref{eqpfaffiandef}, commutes with the graded transposition $\tau^z_U(a)\wedge\tau^z_U(b)=(-1)^{|a||b|}\tau^z_U(b)\wedge\tau^z_U(a)$ of \eqref{eqkoszul}. The contraction operator, summing over pairings symmetric under simultaneous relabelling, is by construction insensitive to which order the two blocks of legs are listed, once the Koszul sign of swapping the two homogeneous blocks $\tau^z_U(a)$, $\tau^z_U(b)$ themselves - as opposed to swapping individual legs within the Pfaffian sum, already accounted for in \eqref{eqpfaffiandef} - is applied. Hence $\Wch(a\cdot b)=(-1)^{|a||b|}\Wch(b\cdot a)$, and applying $\mathbf p\circ\trom$ (linear, parity-preserving) gives the claim.
\end{proof}

This is the super-cyclicity property: 
it is the fermionic analogue of the ordinary cyclicity of a matrix trace, $\tr(AB)=\tr(BA)$, twisted by the expected Koszul sign for a supertrace, $\tr(AB)=(-1)^{|A||B|}\tr(BA)$.

%%%%%%%%%%%%%%%%%%%%%%%%%%%%%%%%%%%%%%%%%%%%%%%%%%%%%%%%%%%%%%%%%%%%%%%%%%%%%%%%%%%%%%
\section{Applications: current and stress-tensor insertions, and fermionic analytic  torsion}
\label{secapplications}

Throughout this section $X$ is compact and, unless stated otherwise, $F$ is a holomorphic bundle with
\begin{equation}\label{eqnozeromodes}
H^0(X,F)=H^1(X,F)=0,
\end{equation}
so that (Definition \ref{defOBV}) $\OBV\simeq\CC$ and $\Trch$ is $\CC$-valued.

\begin{remark}
\label{remzeromodes}
{\it The case of nonvanishing zero modes.} 
Hypothesis \eqref{eqnozeromodes} is imposed in this section only to make $\Trch$ scalar-valued, for direct comparison with Fay's formulas. Nothing in \S\S\ref{seccliff}-\ref{sectrace} requires it, and $\Trch$ is defined, satisfies the QME, and is a quasi-isomorphism for arbitrary $F$. When $H^0(X,F)$ or $H^1(X,F)$ is nonzero, the unregularized operator $\bar\pd_F$ has a kernel,  thus the naive determinant $\det\bar\pd_F$ (a product over all eigenvalues) is indeed zero. But this is not the quantity of interest even classically: the Ray-Singer/Quillen formalism \cite{RaySinger73,Quillen85} instead equips the determinant 
 line $\bigwedge^{\mathrm{top}}H^0(X,F)\otimes\bigl(\bigwedge^{\mathrm{top}}H^1(X,F)\bigr)^{-1}$ with a metric built from the zeta-regularized determinant $\det{}'\bar\pd_F$ over the  nonzero eigenvalues, exactly parametrized by the same data $H^0(X,E),H^1(X,E)$ that enters our $\OBV=\bigwedge^\bullet H^0(X,E)\otimes\Sym H^1(X,E)$. It is in this sense that $\Trch(J_\nu)\in\OBV$, computed without assuming \eqref{eqnozeromodes}, is the natural candidate for an algebraic model of the variation of this Quillen-type metric with jumping cohomology (matching the physical expectation that a fermionic path integral with zero modes computes a correlation function only once enough zero-mode insertions are present to saturate the corresponding Berezin integral). Working this comparison out in the family setting of Bismut-Freed \cite{BismutFreed86} is a natural continuation of the present results that we do not undertake here.
\end{remark}

\subsection{The modified affine current}
\label{sseccurrent}
Recall $E=\Pi(F\oplus F^\vee\otimes\wX)$ from \eqref{eqEdef}. Fix $\nu\in\Omega^{0,1}(X,\End(F))$. In a local frame $\{e_i\}$, $\nu=\nu^i_{j,\bar z}\,d\bar z\cdot e_i\otimes e^j$. The naive current $J_\nu^{\mathrm{naive}}=\nu^i_{j,\bar z}\,d\bar z\cdot\beta_i\gamma^j$ (product taken in $\CE$) fails to be globally defined, exactly as in the bosonic case, because $\beta_i\gamma^j$ does not transform as $e_i\otimes e^j$ under a change of frame or coordinate. Fix in addition a Hermitian metric $h$ on $F$ and $\rho$ on $\wX^{1/2}$.

\begin{proposition}\label{propJnu}
The expression
\begin{equation}\label{eqJnudef}
J_\nu:=\nu^i_{j,\bar z}\,d\bar z\cdot\beta_i\gamma^j\,dz-\tr\bigl(\nu\cdot\rho h^{-1}\pd_z(h\rho^{-1})\,dz\bigr)
\end{equation}
is independent of the choice of local coordinate $z$ and local holomorphic frame, hence defines $J_\nu\in\Omega^{0,1}(X,\CE)$.
\end{proposition}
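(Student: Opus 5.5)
The plan is to check separately how the two terms of \eqref{eqJnudef} transform under (a) a change of local holomorphic frame $e_i\mapsto g^k_i e_k$, $g\in\GL_r(\sO_U)$, and (b) a change of local coordinate $z\mapsto w=\rho(z)$. We then show that the anomalous pieces cancel. Since both kinds of change act on a disc and can be composed, it suffices to treat each separately. Throughout we work on a chart $U$, using the local trivialization $\tau^z_U$ of Theorem \ref{thmchiralPBW}. The product $\beta_i\gamma^j$ is identified with the chiral (OPE) normally ordered product, i.e.\ the constant term of $\beta_i(z_1)\gamma^j(z_2)$ after subtracting $\Psing=\delta_i^j dz/(z_1-z_2)$.

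\emph{Step 1 (frame change).} Under $\beta_i\mapsto g^k_i\beta_k$ and $\gamma^j\mapsto (g^{-1})^j_l\gamma^l$, the quantity $\nu^i_j e_i\otimes e^j$ is invariant, so the classical part of $\nu^i_j\beta_i\gamma^j$ is invariant. The normal ordering, however, is defined by subtracting $\delta/(z_1-z_2)$ in the old frame. Re-expanding $g(z_1)^k_i(g^{-1}(z_2))^j_l\,\delta^l_k/(z_1-z_2)$ around the diagonal produces a finite central correction $(g^{-1}\pd_z g)^j_i\,dz$. This is the fermionic Wick theorem (Theorem \ref{thmfermwick}) applied to a single pair. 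Contracting with $\nu$, the anomaly of the first term is $\pm\tr(\nu\, g^{-1}\pd_z g)\,dz$. The sign is opposite to Gui's bosonic case because the contraction of two odd generators carries the Koszul sign.

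For the counterterm, note that in the new frame $h$ becomes $g^{\dagger}hg$. Since $\nu$ is of type $(0,1)$ and $g$ is holomorphic, $\tr(\nu\, h^{-1}\pd_z h)$ shifts by exactly $\tr(\nu\, g^{-1}\pd_z g)$: the antiholomorphic factor $g^\dagger$ drops out under $\pd_z$ after conjugation. We will fix the sign convention in \eqref{eqJnudef} so that the two shifts cancel; this sign check is the first concrete computation to carry out.

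\emph{Step 2 (coordinate change).} Under $z\mapsto\rho(z)$, the fields $\beta$ (weight $1$) and $\gamma$ (weight $0$ after stripping $dz$) transform primarily. However, the normally ordered bilinear $:\!\beta_i\gamma^j\!:$ picks up a central term proportional to $\delta_i^j\,\rho''/\rho'$. This comes from re-expanding $\rho'(z_1)^{a}\rho'(z_2)^{b}/(\rho(z_1)-\rho(z_2))$ at the diagonal, and it is exactly the coordinate-change formula for $\mathscr V_F^r$ computed in Appendix \ref{appiso} via Theorem \ref{thmVOAiso}. The result is a term $c\,\tr(\nu)\,\pd_z\log\rho'$. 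On the other side, the metric $\rho$ on $\wX^{1/2}$ rescales by $|\rho'|$, and $\pd_z\log|\rho'|^{\pm1}=\pm\tfrac12\rho''/\rho'$ because $\rho'$ is holomorphic. Hence $\tr(\nu\,\rho h^{-1}\pd_z(h\rho^{-1}))$ shifts by the matching multiple of $\tr(\nu)\rho''/\rho'$.

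\emph{Step 3 (gluing).} Once both invariances hold, the local expressions agree on overlaps, and they glue to a section of $\Omega^{0,1}(X,\CE)$; the $d\bar z$ factor is handled trivially since $\nu$ is a globally defined $(0,1)$-form.

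The main obstacle is Step 2, specifically getting the coefficient of the central anomaly exactly right. It depends on the weight assignment $(1,0)$ of \eqref{eqfieldexpansions} and on the choice of $\wX^{1/2}$-power in the counterterm, and the two must match including the fermionic sign. My first approach will be to compute it directly on the one-pair example of Example \ref{exrankonelocal}, expanding $(z_1-z_2)/(\rho(z_1)-\rho(z_2))$ to first order. If the power of $\rho$ in \eqref{eqJnudef} does not match the weights, I would reinterpret $\rho$ as the metric on the appropriate power of $\wX$ (as Remark \ref{remcanonicalsummary} allows) rather than change the current.
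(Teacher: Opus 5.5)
\textbf{There is a genuine gap.} Your overall strategy is the paper's: treat frame and coordinate changes separately, and cancel the anomaly of the normally ordered bilinear against the shift of the counterterm. But the proposal does not prove the statement as posed, and its one definite claim about signs is wrong.

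\emph{The sign in Step 1.} You say the frame anomaly has the sign opposite to Gui's bosonic one ``because the contraction of two odd generators carries the Koszul sign''. No such sign arises. The anomaly comes only from Taylor-expanding the singular part of $\tilde\beta_i(z_1)\tilde\gamma^j(z_2)$, taken in this fixed order:
$(g^{-1}(z_2)g(z_1))^j_i/(z_1-z_2)=\delta^j_i/(z_1-z_2)+(g^{-1}\pd_z g)^j_i+O(z_1-z_2)$.
Hence $\tilde\nu^i_j\tilde\beta_i\tilde\gamma^j=\nu^k_l\beta_k\gamma^l+\tr(\tilde\nu\,g^{-1}\pd_z g)$, with $\tilde\nu=g^{-1}\nu g$ and a definite plus sign. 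No two odd generators are ever transposed. Moreover, $\beta_i(z_1)\gamma^j(z_2)\sim\delta^j_i/(z_1-z_2)$ is the same OPE for symplectic bosons and for fermions; statistics only changes $\gamma^j(z_1)\beta_i(z_2)$, which never enters. This statistics-independence is precisely why the paper can import Gui's counterterm with Gui's sign, and your own re-expansion already exhibits it.

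Since $\pd_z g^\dagger=0$, one has $\tilde h^{-1}\pd_z\tilde h=g^{-1}(h^{-1}\pd_z h)g+g^{-1}\pd_z g$. The minus sign in \eqref{eqJnudef} therefore cancels the anomaly as written. Your plan to ``fix the sign convention in \eqref{eqJnudef}'' would, given the flip you expect, produce a counterterm that is not frame-invariant. In any case, adjusting the formula until it works is not a proof of the proposition.

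\emph{The coefficient in Step 2.} This step also stops before the decisive computation. Its appeal to Appendix~\ref{appiso} is circular, because that appendix takes its transformation formulas from this very proposition. Rename the coordinate change as well, since $\rho$ already denotes the metric: write $w=w(z)$ and $\theta=dw/dz$.

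With $\beta$ of weight one and $\gamma$ of weight zero,
$\theta(z_1)/(w(z_1)-w(z_2))=1/(z_1-z_2)+\tfrac12\pd_z\log\theta+O(z_1-z_2)$.
So $\nu^i_j\beta_i\gamma^j\,dz$ exceeds its $w$-counterpart by $\tfrac12\tr(\nu)\,\pd_z\log\theta\,dz$.

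On the counterterm side, $\rho h^{-1}\pd_z(h\rho^{-1})=h^{-1}\pd_z h-\pd_z\log\rho$, and $h^{-1}\pd_z h\,dz$ is coordinate-invariant. Normalizing $\rho_z:=\|dz^{1/2}\|^2$ gives $\rho_z=|\theta|^{-1}\rho_w$, hence $\pd_z\log\rho_z\,dz=\pd_w\log\rho_w\,dw-\tfrac12\pd_z\log\theta\,dz$. The counterterm therefore shifts by $-\tfrac12\tr(\nu)\,\pd_z\log\theta\,dz$, which cancels the anomaly exactly.

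The coefficient $\tfrac12$ (the value $\lambda-\tfrac12$ at $\lambda=1$, where $\lambda$ is the weight of $\beta$) is exactly what a metric on $\wX^{1/2}$ produces. No change of power is needed; only the normalization convention for $\rho$ has to be fixed, and you must state it rather than leave it open. With these two explicit computations in place, your Step 3 completes the argument.
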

\begin{proof}
Under a coordinate change $w=w(z)$, normal ordering of the weight-$(1,0)$ pair $(\beta,\gamma)$ transforms by the same Schwarzian-derivative-type correction as in the bosonic case, since this correction is dictated entirely by the singular part of the operator product expansion $\beta_i(z)\gamma^j(w)\sim\delta^j_i/(z-w)$, unchanged, as an OPE coefficient, between the bosonic and fermionic realizations of a weight-$(1,0)$ pair, together with the (statistics-independent) transformation law of a normally-ordered bilinear of primary fields. Explicitly,
\[
\nu^{\tilde i}_{\tilde j,\bar w}\,d\bar w\cdot\beta_{\tilde i,w}\gamma^{\tilde j}_wdw=\nu^i_{j,\bar z}\,d\bar z\cdot\beta_{i,z}\gamma^j_zdz+\tr\bigl(\tfrac12\nu\theta'\bigr),\qquad \theta(z)=\frac{dw}{dz},
\]
matching $\tr(\tfrac12\nu\theta')=\tr(\nu\cdot \rho_zh^{-1}\pd_z(h\rho_z^{-1}dz))$. This is cancelled by the transformation of the second term in \eqref{eqJnudef}. Under a frame change with transition function $\sigma(z)$, $\beta_i\gamma^j$ transforms with an extra $\tr(\nu\,\pd_z\log\sigma)$, again cancelled by the second term. 
  Both computations are identical, term by term, to the bosonic verification (\cite[Prop. 6.1]{Gui23}),
 since neither anomaly computation uses commutativity of the fields being normal-ordered. It  uses only the OPE singularity and the conformal weights $(1,0)$.
\end{proof}

\subsection{Expectation value and fermionic analytic torsion}
Assume \eqref{eqnozeromodes}. Expand the Szeg\H{o} kernel of $F$ near the diagonal, $P(z_1,z_2;F)=\frac{\id\cdot dz}{z_1-z_2}+a_0(z_1;F)+O(z_2-z_1)$ (Definition \ref{defPsingQreg}, now for the bundle $F$ itself rather than $E$; by \cite[Ch. 2, (2.10)]{Fay92}, $a_0(z;F)-\rho h^{-1}\pd_z(h\rho^{-1})\in\Omega^{0,0}(X,\End(F)\otimes\wX)$ is globally well defined).

\begin{theorem}\label{thmcurrentTorsion}
$\displaystyle\langle J_\nu\rangle=\Trch(J_\nu)=\frac1\pi\int_X\tr\Bigl[\nu\cdot\bigl(a_0(z,F)-\rho h^{-1}\pd_z(h\rho^{-1})\bigr)\Bigr]dz.$
\end{theorem}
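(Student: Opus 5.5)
The computation of $\Trch(J_\nu)$ will run through Definition \ref{deftracemapch} directly. Under \eqref{eqnozeromodes} we have $\HH^0(X,E)=\HH^1(X,E)=0$. Indeed, $H^0(X,F^\vee\otimes\wX)\simeq H^1(X,F)^*=0$ and $H^1(X,F^\vee\otimes\wX)\simeq H^0(X,F)^*=0$ by Serre duality. Hence $\OBV=\CC$, the contraction $\pd_e$ is zero, and only the $k=0$ term survives, leaving
\[
\Trch(J_\nu)=\trom\circ\mathbf p\bigl(\Wch(J_\nu)\bigr).
\]
Since $J_\nu$ is a one-point chain ($n=1$), $\Wch$ reduces to $\sW^{\mathbf v}$. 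By Proposition \ref{propnormalorderingwelldef}, on a chart this equals $e^{\Qreg}\tau^z_U$. The problem is therefore a local computation of $e^{\Qreg}$ on the normally ordered bilinear $\beta_i\gamma^j$, followed by projection to $\bigwedge^0L=\wX$. Note that $\trom$ on a single-variable $(1,1)$-form is simply integration over $X$.

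\textbf{Step 1.} Under $\tau^z_U$ (Theorem \ref{thmchiralPBW}, the case $n_1=n_2=0$ of \eqref{eqlocalclifford}), $\beta_i\gamma^j$ goes to $\beta_i\wedge\gamma^j\in\bigwedge^2L$.

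\textbf{Step 2.} Apply $e^{\Qreg}$. Its $\bigwedge^0$-component is the single self-loop contraction of the two legs against $\Qreg$ at coincident points. By \eqref{eqQregexpansion} this self-loop is the constant Taylor coefficient $a_0(z;E)$, paired via $\langle\beta_i,\gamma^j\rangle=\delta_i^j\,dz$ from \eqref{eqbetagammapairing}.

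\textbf{Step 3.} Reduce from $E$ to $F$. Because $F$ and $F^\vee\otimes\wX$ are complementary isotropic subbundles, the $\beta\gamma$-block of $P(\,\cdot\,;E)$ is the Szeg\H{o} kernel $P(\,\cdot\,;F)$ of $F$, possibly transposed. Consequently the $(i,j)$ entry of $a_0(z;E)$ relevant to $\beta_i\wedge\gamma^j$ is $a_0(z;F)^j{}_i$. Summing against $\nu^i_{j,\bar z}\,d\bar z$ then gives $\tr(\nu\, a_0(z;F))\,dz\wedge d\bar z$ up to sign.

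\textbf{Step 4.} Add the scalar counterterm of \eqref{eqJnudef}. It lies in $(\CE)_0=\wX$ and passes through $\Wch$ unchanged. The resulting integrand is $\tr[\nu(a_0(z;F)-\rho h^{-1}\pd_z(h\rho^{-1}))]$, which is globally defined by \cite[(2.10)]{Fay92}, consistent with Proposition \ref{propJnu}. Integrating then gives the stated formula, provided the normalization $\frac{1}{\pi}$ from $dz\wedge d\bar z=-2i\,dx\wedge dy$ in $\trom$ and the $\frac1{2\pi i}$ normalization of $\Psing$ are matched against Fay's conventions.

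The main obstacle will be the sign and normalization bookkeeping in Steps 2–3. One sign comes from the Koszul transposition of the odd legs $\beta_i,\gamma^j$ when forming the Pfaffian pair in \eqref{eqpfaffiandef}. Another comes from the super-antisymmetry \eqref{eqPantisym}, which forces the $\gamma\beta$-block of $P$ to be minus the transpose of the $\beta\gamma$-block. Together these fix whether one gets $+\tr(\nu a_0)$ or $-\tr(\nu a_0)$, and hence the sign relative to Gui's bosonic formula (cf. Corollary \ref{corsignflip}). I would settle this first in the rank-one local model of Example \ref{exrankonelocal}, checking the sign of the self-contraction of $X_\beta X_\gamma$ against the central term $(t-z)^{-2}$, and only then pass to general $r$ by linearity in $\nu$.
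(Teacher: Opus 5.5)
Your plan matches the paper's own proof, in the expanded form given in Example \ref{extracemapworked}. Since $\OBV\simeq\CC$, every $k\ge1$ term vanishes, and on the one-point chain $\Wch$ reduces to $\sW^{\mathbf v}=e^{\Qreg}\tau^z_U$. The single $\Qreg$ self-loop replaces $\beta_i\gamma^j$ by the entries of $a_0(z;F)$, the scalar counterterm passes through unchanged, and $\trom$ on a one-point $(1,1)$-form is $\frac1\pi\int_X$ by the paper's declared normalization, so there is nothing further to match against Fay.

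The paper simply asserts the sign you leave open: a single Pfaffian transposition, with the same sign as in the bosonic case. You can fix it without any Koszul bookkeeping. By Propositions \ref{propJnu} and \ref{propnormalorderingwelldef}, $\mathbf p\,\sW^{\mathbf v}(J_\nu)$ is a global $(1,1)$-form. Suppose the self-loop contributed $c\,\tr(\nu\,a_0(z;F))$ for a universal constant $c$. Then, by Fay's (2.10), the term $(c-1)\tr(\nu\,a_0(z;F))$ would have to be independent of the local frame. Since $a_0(z;F)$ shifts by $\pd_z\sigma\,\sigma^{-1}$-type terms under a frame change $\sigma$, this forces $c=1$.
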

\begin{proof}
By Definition \ref{deftracemapch}, since $\OBV\simeq\CC$ under \eqref{eqnozeromodes}, $\Trch(J_\nu)$ is computed by $\trom\circ\mathbf p\circ\Wch(J_\nu)$ with no $\pd_e$-corrections. The only non-vanishing Wick contraction of the single insertion $J_\nu$ is the self-loop contraction of $\beta_i$ against $\gamma^j$ within $J_\nu$ itself, via $\Qreg$ (Lemma \ref{lemcontractionprops}), which replaces $\beta_i\gamma^j$ by $\Qreg(z,z)=a_0(z,F)$ (matrix entries). The sign of this self-loop is fixed by the Pfaffian convention \eqref{eqpfaffiandef}. A single transposition, contributing the same overall sign as in the bosonic computation, since the Pfaffian of a $2\times2$ antisymmetric-in-legs matrix built from a single pair reduces, for a single contraction, to the ordinary contraction with no extra combinatorial factor. Integrating the resulting $(1,1)$-form over $X$ gives the stated formula. 
\end{proof}

\begin{example}
\label{extracemapworked}
In this example we consider the defining sum of Definition \ref{deftracemapch}
 worked term by term.  
It is worth making Theorem \ref{thmcurrentTorsion}'s proof completely explicit as an illustration of Definition \ref{deftracemapch} itself, since $J_\nu$ is the simplest possible nontrivial input (a single field insertion, no auxiliary marked points). Under \eqref{eqnozeromodes}, $\HH^0(X,E)=\HH^1(X,E)=0$,  thus the background $e$ has no components to sum over and every $k\ge1$ term of
\[
\Trch(J_\nu)=\sum_{k\ge0}\frac1{k!}\,\trom\circ\mathbf p\bigl(\pd_e^k\Wch(J_\nu)\bigr)
\]
vanishes identically (there is no $e\ne0$ to contract against), leaving only the $k=0$ term $\trom\circ\mathbf p(\Wch(J_\nu))$. Unwinding $\Wch=e^{\Psing}\circ(\sW^{\mathbf v})^{\boxtimes n}$ on the single local insertion $J_\nu=\nu^i_{j,\bar z}\,d\bar z\cdot\beta_i\gamma^j\,dz$ (no further point to contract against $\Psing$,  thus $e^{\Psing}$ acts as the identity here and only the normal-ordering map $\sW^{\mathbf v}$ of Definition \ref{defnormalordering} is active): by \eqref{eqnormalorderinglocal}, $\sW^{\mathbf v}(J_\nu)|_U=e^{\Qreg}\tau^z_U(J_\nu)$, and since $J_\nu$ has exactly two $L$-legs ($\beta_i,\gamma^j$), the Pfaffian sum \eqref{eqpfaffiandef} defining $e^{\Qreg}$ truncates after its $m=1$ (single-contraction) term - there are no $m\ge2$ terms available, since that would require at least four legs. This single term is precisely $\nu^i_{j,\bar z}d\bar z\cdot\Qreg(z,z)^j_i\,dz=\nu^i_{j,\bar z}d\bar z\cdot a_0(z,F)^j_i\,dz$, a $(1,1)$-form. Applying $\mathbf p$ (already resulting in $\wX^{\boxtimes2}$-type data,  thus $\mathbf p$ acts as the identity here) and then $\trom$ (which, for a single point with no further variables to take residues in, is  $\frac1\pi\int_X(-)$, matching the normalization used throughout \S\ref{secapplications}) reproduces Theorem \ref{thmcurrentTorsion} exactly. Every arrow in the defining sum, and every term of the sum over $k$, is accounted for by exactly one contribution, with no cancellation and no term left implicit.
\end{example}

\begin{remark}
{\it Relation to analytic torsion.} 
 Integrating out the free fermions of the $bc$-system associated with $F$ gives the partition function $Z_{\mathrm{ferm}}=\det\bar\partial_F$ (a Grassmann/Berezin integral produces a determinant, with no inverse power, unlike the bosonic Gaussian integral). The infinitesimal variation of $\log Z_{\mathrm{ferm}}$ along a holomorphic family $\{F_s\}$ with $\nu=\frac{\pd}{\pd s}\big|_{s=0}$ is $\langle J_\nu\rangle$. Theorem \ref{thmcurrentTorsion} therefore recovers, purely from the chiral chain complex of $\CE$, the first-order variation of the fermionic Ray-Singer analytic torsion \cite{RaySinger73} (equivalently, of the Quillen metric \cite{Quillen85} on the determinant line of $\bar\partial_F$) along the moduli of $F$, matching Fay's classical formula \cite[p. 79, Thm. 4.5]{Fay92}.
\end{remark}

\begin{example}
\label{exP1}
 Let us consider the case $X=\PP^1$, $F=\sO(-1)$. 
Condition \eqref{eqnozeromodes} is nonvacuous already in genus $0$: for $F=\sO(-1)$ on $X=\PP^1$, $H^0(\PP^1,\sO(-1))=0$ trivially (a line bundle of negative degree has no nonzero holomorphic sections), and by Serre duality $H^1(\PP^1,\sO(-1))\simeq H^0(\PP^1,\sO(-1)^\vee\otimes\wX)^*=H^0(\PP^1,\sO(-1))^*=0$ (using $\sO(-1)^\vee\otimes\sO(-2)=\sO(-1)$). In fact $\sO(-1)$ is the only negative-degree line bundle on $\PP^1$ meeting \eqref{eqnozeromodes}: for $\sO(-d)$, $d\ge2$, Serre duality gives $H^1(\PP^1,\sO(-d))\simeq H^0(\PP^1,\sO(d-2))^*\ne0$, while $\sO(0)=\sO$ has $H^0=\CC\ne0$.

Theorem \ref{thmcurrentTorsion} therefore applies to any $\nu\in\Omega^{0,1}(\PP^1,\End(\sO(-1)))=\Omega^{0,1}(\PP^1,\sO)$, with $a_0(z;\sO(-1))$ computed from the Green's operator of $\bar\pd$ twisted by $\sO(-1)$ for the metric induced by the Fubini-Study form. In the standard affine trivialization ($e_0(z)=(1,z)$ spanning the tautological line, $h(z)=1+|z|^2$), the curvature of this metric is the classical
\[
-\pd\bar\pd\log h(z)=-\frac{dz\wedge d\bar z}{(1+|z|^2)^2},
\]
and $a_0(z;\sO(-1))-h^{-1}\pd_zh$ (the metric-independent combination of Remark \ref{remmetricdep}, specialized to the trivial $\rho$ available at genus $0$) is fixed, up to the elementary theta-function-free closed forms classical for genus-$0$ Bergman kernels (\cite[Ch. 2]{Fay92}), by this curvature via the standard heat-kernel/Green's-function relation on the round sphere. We do not reproduce the further, purely computational reduction to a single elementary function of $z_1-z_2$ here, mentioning instead that the hypotheses of Theorem \ref{thmcurrentTorsion} are met by an explicit, elementary, and (by the previous paragraph) essentially unique bundle already at genus $0$, and that the only additional input beyond \S\ref{secszego} needed to finish the computation is the classical, real-analytic (as opposed to  only smooth) Fubini-Study Green's function, for which genus $0$ carries no modular/theta-function complication.
\end{example}

\subsection{The modified energy-momentum tensor}
\label{ssecemtensor}
We complete the parallel with \cite[\S6.2]{Gui23} by treating the coupling to a Beltrami differential, i.e., the variation of $X$ itself rather than of $F$. Take $F=\sO_X$ (trivial line bundle),  thus $E=\Pi(\sO_X\oplus\wX)$, generated by a single weight-$(1,0)$ fermion pair $\beta,\gamma$. Let $\mu\in\Omega^{0,1}(X,\Theta_X)$, locally $\mu=\mu^z_{\bar z}\,d\bar z\cdot\pd_z$.

\begin{proposition}
\label{propTmu}
The naive guess $T_\mu^{\mathrm{naive}}=\mu^z_{\bar z}\,d\bar z\cdot\beta_z\pd_z\gamma$ is not globally defined, but
\begin{equation}\label{eqTmudef}
\widetilde T_\mu:=\mu^z_{\bar z}\,d\bar z\Bigl(\beta_z\cdot\pd_z\gamma-\frac13\frac{\pd_z^2\rho}{\rho}\Bigr)dz\ \in\ \Omega^{0,1}(X,\CE)
\end{equation}
is a well-defined global section.
\end{proposition}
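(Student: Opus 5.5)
The plan is to check directly that $\widetilde T_\mu$ is unchanged by every change of local data, following the pattern of Proposition \ref{propJnu} and \cite[Prop.~6.2]{Gui23}. Since $F=\sO_X$, the frame is trivial up to multiplication by a constant, and constant rescalings act on $\beta$ and $\gamma$ by inverse factors, so $\beta\,\pd\gamma$ is untouched. Only two changes remain to be checked: a change of holomorphic coordinate $z\mapsto w=w(z)$, and the choice of the metric $\rho$ on $\wX^{1/2}$. For the latter, rescaling $\rho\mapsto e^{f}\rho$ with $f$ smooth changes $\pd_z^2\rho/\rho$. I would instead show that the coordinate anomaly is a multiple of the Schwarzian, so that any $\rho$ whose correction term has the same transformation law gives a global section. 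The $\rho$-dependence of the correction term then only records the choice of auxiliary data, as asserted in Remark \ref{remcanonicalsummary}.

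\textbf{Step 1: the naive term is not tensorial.} I would compute how the normally ordered product $\beta_z\pd_z\gamma$ transforms under $w=w(z)$. Write $\theta=dw/dz$. Here $\beta$ has weight $1$ and $\gamma$ has weight $0$, so at the level of primary fields $\beta_w=\theta^{-1}\beta_z$ and $\pd_w\gamma=\theta^{-1}\pd_z\gamma$. The product then changes by the factor $\theta^{-2}$, which exactly matches $\mu^w_{\bar w}\,d\bar w\,dw=\theta^{-1}\bar\theta\cdots$ combined with $\pd_w=\theta^{-1}\pd_z$. The obstruction comes entirely from normal ordering. The regularized product at coincident points is defined by subtracting $\Psing=dz/(z_1-z_2)$ in the $z$ coordinate, and re-expanding $1/(w(z_1)-w(z_2))$ around the diagonal gives
\[
\frac{\theta(z_1)}{w(z_1)-w(z_2)}=\frac{1}{z_1-z_2}+\frac{\theta'}{2\theta}+\frac{1}{6}\Bigl(\frac{\theta''}{\theta}-\frac{3}{2}\frac{\theta'^2}{\theta^2}\Bigr)(z_2-z_1)+\cdots.
\]
The $\pd_z\gamma$ leg differentiates this once. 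The first Taylor coefficient therefore appears, and the anomaly of $\beta\pd\gamma$ becomes a universal multiple of the Schwarzian $\{w,z\}$. This uses only the OPE \eqref{eqOPE} and the Wick theorem (Theorem \ref{thmfermwick}), equivalently the coordinate-change formula of Appendix \ref{appiso} via Theorem \ref{thmVOAiso}. The fermionic statistics enter only through one Koszul sign, which fixes the overall sign of the coefficient (compare Corollary \ref{corsignflip}).

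\textbf{Step 2: cancel with the metric term.} I would compute the transformation of $\pd_z^2\rho/\rho$, where $\rho$ is viewed as a local density of weight $\pm\tfrac12$. Under $w=w(z)$ one has $\rho_w=\theta^{\mp1/2}\rho_z$ up to conjugation factors that are killed by the holomorphic derivative. Differentiating twice produces $\theta^{-2}\,\pd_z^2\rho_z/\rho_z$ plus a combination of $\theta''/\theta$ and $(\theta'/\theta)^2$. After including the first-derivative cross term, this combination should be proportional to $\{w,z\}$ as well, possibly plus a term $(\theta'/\theta)\,\pd_z\rho/\rho$. The coefficient $-\tfrac13$ is chosen so that the pure Schwarzian parts cancel against Step 1.

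\textbf{Main obstacle.} The hard part is the bookkeeping in Step 2 and the matching of constants. The first-derivative cross term $(\theta'/\theta)\,\pd_z\rho/\rho$ has to disappear. I would try to get this by using the precise weight convention for $\rho$ so that the combination is exactly Schwarzian-covariant; alternatively, I would absorb the term into the $\pd\gamma$ leg using the zero-mode-free normalization. I would first verify the constants on the model change $w=z+\epsilon z^3$, where $\{w,z\}=6\epsilon+O(\epsilon^2)$, and then extend the result by the cocycle property of the Schwarzian. Once the two anomalies cancel, the expression is a well-defined $(1,1)$-tensor valued in $\CE$, which is the claim.
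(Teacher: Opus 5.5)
Your route is the paper's: the Schwarzian anomaly of the normal-ordered bilinear is cancelled by the transformation of the $\rho$-term. However, the proposal never verifies the only thing the proposition asserts, namely that the coefficient $-\tfrac13$, with that sign, produces the cancellation. Saying that ``$-\tfrac13$ is chosen so that the pure Schwarzian parts cancel'' assumes the conclusion.

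Step~2 is easier than you fear. For $\rho$ a metric on $\wX^{1/2}$ one has $\rho_z=\|dz^{1/2}\|^2=|\theta|^{-1}\rho_w$. Write $\rho_z=\bar\theta^{-1/2}\,\theta^{-1/2}\,(\rho_w\circ w)$. The first-derivative cross terms coming from $2(\theta^{-1/2})'(\rho_w\circ w)'$ and from $(\rho_w\circ w)''$ cancel identically, and
\[
\frac{\pd_z^2\rho_z}{\rho_z}=\theta^2\,\frac{\pd_w^2\rho_w}{\rho_w}-\frac12\{w,z\},\qquad \{w,z\}=\frac{\theta''}{\theta}-\frac32\Bigl(\frac{\theta'}{\theta}\Bigr)^2 .
\]
So no term $(\theta'/\theta)\,\pd_z\rho/\rho$ survives. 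Your fallback of absorbing it into the $\pd\gamma$ leg via the zero-mode hypothesis would not work anyway, since \eqref{eqnozeromodes} plays no role in a local coordinate change. Hence $-\tfrac13\frac{\pd^2\rho}{\rho}\,dz^{\otimes2}$ shifts by exactly $-\tfrac16\{w,z\}\,dz^{\otimes2}$ under $z\mapsto w$. The proposition is therefore equivalent to the anomaly of $\beta\pd\gamma$ being exactly $+\tfrac16\{w,z\}\,dz^{\otimes2}$, which is the formula the paper's proof records.

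That sign is exactly what your Step~1 leaves open, and your account of its origin is wrong. No Koszul sign enters: the contraction is of $\beta$ on the left with $\gamma$ on the right, with no transposition. It is therefore governed solely by the OPE coefficient; the paper stresses that this transformation law is statistics-independent, and Corollary \ref{corsignflip} is beside the point. Moreover, since the $w$-chart normal ordering subtracts the $w$-chart singular part, the anomaly is minus the Taylor coefficient you display. If the $\beta$--$\gamma$ contraction is $s\,dz/(z_1-z_2)$ with $s=\pm1$, then
\[
\beta_w\pd_w\gamma\,dw^{\otimes2}-\beta_z\pd_z\gamma\,dz^{\otimes2}=-s\,\pd_{z_2}\Bigl(\frac{\theta(z_1)}{w(z_1)-w(z_2)}-\frac1{z_1-z_2}\Bigr)\Big|_{z_2=z_1}dz^{\otimes2}=-\frac s6\,\{w,z\}\,dz^{\otimes2}.
\]
With \eqref{eqOPE} read literally ($s=+1$), your computation would produce the invariant combination $\beta\pd\gamma+\tfrac13\pd_z^2\rho/\rho$, not the stated one; the coefficient $-\tfrac13$ requires $s=-1$. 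To close the argument you must extract the actual sign of the contraction from the trivialization $\tau^z_U$ and the Pfaffian bookkeeping of Theorem \ref{thmfermwick} (Appendix \ref{appwick}), instead of fixing the coefficient by demanding cancellation. A single test such as $w=e^z$, where $\{w,z\}=-\tfrac12$, then pins down every constant.

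A minor point: local frames of $\sO_X$ are arbitrary nonvanishing holomorphic functions, not constants. The statement implicitly uses the canonical frame $1$, since $\beta\pd\gamma$ is not invariant under a nonconstant rescaling.
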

\begin{proof}
Under $w=w(z)$, the coordinate change formula for a normally-ordered weight-$(1,0)$ derivative bilinear is again statistics-independent (it is fixed by the OPE, exactly as in Proposition \ref{propJnu}) 
\[
\beta_w\cdot\pd_w\gamma\,dw^{\otimes2}=\beta_z\cdot\pd_z\gamma\,dz^{\otimes2}+\frac16\frac{\theta''}{\theta}-\frac14\Bigl(\frac{\theta'}{\theta}\Bigr)^2,
\]
the Schwarzian derivative of $\theta=dw/dz$, cancelled by the transformation of $\frac13\frac{\pd_z^2\rho}\rho dz^{\otimes2}$ under the $r$-spin structure transformation of $\rho$, exactly as in \cite[Prop. 6.4]{Gui23}.
\end{proof}

\begin{theorem}
\label{thmemTorsion}
The constant term of $\Trch(\widetilde T_\mu)$ (with respect to \eqref{eqnozeromodes} applied to $F=\sO_X$, i.e., assuming $g(X)\ge2$ so that $H^0(X,\sO_X)=\CC\neq0$ is excluded and one instead restricts, as in \cite[\S6.2]{Gui23}, to the constant (fermion-number-zero) component of $\Trch$) is
\begin{equation}\label{eqTmuformula}
\frac1\pi\int_X\tr\Bigl[\mu\cdot\Bigl(a_1(z;E)-\pd_za_0(z;E)-\frac13\rho^{-1}\pd_z^2\rho\Bigr)\Bigr]dz.
\end{equation}
\end{theorem}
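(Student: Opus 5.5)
The plan is to mirror the proof of Theorem \ref{thmcurrentTorsion} and Example \ref{extracemapworked}, now for the single insertion $\widetilde T_\mu$. Restricting to the constant (fermion-number-zero) component of $\Trch$ discards every $k\ge1$ term of Definition \ref{deftracemapch}. What remains is $\trom\circ\mathbf p\circ\Wch(\widetilde T_\mu)$. Since there is only one insertion point, $e^{\Psing}$ acts trivially. By Proposition \ref{propnormalorderingwelldef}, on a chart $U$ with coordinate $z$ we then have $\Wch(\widetilde T_\mu)|_U=e^{\Qreg}\tau^z_U(\widetilde T_\mu)$. The term $-\tfrac13\rho^{-1}\pd_z^2\rho$ lies in the central $\wX$-factor and passes through $e^{\Qreg}$ and $\mathbf p$ unchanged. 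The bilinear $\beta_z\cdot\pd_z\gamma$ has exactly two $L$-legs, so the Pfaffian sum \eqref{eqpfaffiandef} truncates at its single-contraction term $m=1$, and $\mathbf p$ keeps only that term.

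The key computation is this self-loop. By Theorem \ref{thmchiralPBW}, the leg $\pd_z\gamma$ corresponds to $\gamma\otimes\pd_z\otimes dz^{-1}$. The contraction against $\Qreg$ is therefore obtained by applying $\pd$ in the $\gamma$-slot to $\Qreg(z_1,z_2)$ and then restricting to the diagonal. Using the Taylor expansion \eqref{eqQregexpansion},
\[
\Qreg(z_1,z_2)=a_0(z_1;E)+a_1(z_1;E)(z_2-z_1)+O(|z_1-z_2|^2),
\]
there are two cases. Differentiating in $z_2$ and setting $z_2=z_1$ gives $a_1(z;E)$. If the $\gamma$-slot is instead the first variable, the derivative of $a_0(z_1)+a_1(z_1)(z_2-z_1)$ on the diagonal gives $\pd_za_0-a_1$. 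Which slot carries $\gamma$ depends on the ordering convention $\beta\boxtimes\gamma$ in $P$. I would fix it by \eqref{eqPantisym}, which relates $P(z_1,z_2)$ to $-\sigma_{1,2}P(z_2,z_1)$. Combined with the Koszul sign of the single transposition in \eqref{eqpfaffiandef}, this should produce the combination $a_1-\pd_za_0$ up to an overall sign. I would then check that this sign is the one in \eqref{eqTmuformula} (Corollary \ref{corsignflip} predicts the flip relative to Gui).

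Finally, the $z$-local expression is glued into a global $(1,1)$-form. This holds because $\widetilde T_\mu$ is global by Proposition \ref{propTmu} and $\Wch$ is global by Proposition \ref{propnormalorderingwelldef}. One then applies $\trom=\frac1\pi\int_X$ as in Example \ref{extracemapworked}, where $\tr$ is over the rank-one $F=\sO_X$ components of $E$.

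The main obstacle I expect is the sign and slot bookkeeping in the differentiated self-loop. The odd legs, the antisymmetry \eqref{eqPantisym}, and the identification $E\simeq E^\vee\otimes\wX$ used to read $\Qreg$ as an endomorphism all contribute signs. As a consistency check, I would verify that the resulting density transforms under $w=w(z)$ with exactly the Schwarzian correction displayed in the proof of Proposition \ref{propTmu}, cancelled by the $\rho$ term. This checks both the sign and the relative coefficient of $\pd_za_0$. A secondary point is the zero-mode subtlety for $F=\sO_X$, where $H^0\ne0$. Here I would argue that the constant component is unaffected by the $\pd_e$-terms, which carry positive $e$-degree.
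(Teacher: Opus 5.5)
Your proposal follows the paper's own (much terser) proof: restrict to the $k=0$ term, note that $e^{\Psing}$ is inert on a single insertion, and evaluate the unique self-loop of $\beta_z\pd_z\gamma$ against $\Qreg$, with $a_1$ coming from the derivative on the $\gamma$-leg and $\pd_z a_0$ from differentiating the Taylor expansion \eqref{eqQregexpansion} in its base point. One refinement: your two ``slot cases'' are not competing answers, because by \eqref{eqPantisym} the Taylor coefficients of the two blocks of $\Qreg$ are related by $a_0\mapsto -a_0$, $a_1\mapsto a_1-\pd_z a_0$; the same self-loop therefore reads $a_1$ in one block and $a_1-\pd_z a_0$ in the other, and the $-\pd_z a_0$ in \eqref{eqTmuformula} simply records which block $a_0,a_1$ are taken from.
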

\begin{proof}
Identical in structure to Theorem \ref{thmcurrentTorsion}: the self-loop contraction of $\beta_z\pd_z\gamma$ now involves the next coefficient $a_1(z;E)$ of the Szeg\H{o} expansion \eqref{eqQregexpansion}, because of the extra derivative $\pd_z\gamma$, together with a first derivative of $a_0$ from differentiating the propagator. This is the fermionic analogue of \cite[Thm. 6.5]{Gui23}.
\end{proof}

\begin{remark}
By \cite[Rem. 6.6]{Gui23} in the bosonic case, formula \eqref{eqTmuformula} recovers the first term of Fay's formula \cite[p. 60, (3.27)]{Fay92} for the variation of the analytic torsion along the moduli of the curve $X$. The same identification holds here for the fermionic torsion of $\bar\partial_{\sO_X}$ twisted by the deformation.
\end{remark}

\subsection{Comparison with the bosonic (symplectic boson) formulas}
\begin{corollary}
\label{corsignflip}
Let $F$ be as in \S\ref{sseccurrent} and let $E_{\mathrm{sympl}}=F\oplus F^\vee\otimes\wX$ carry instead the symplectic pairing of \cite[\S3]{Gui23} (i.e., the symplectic bosons/chiral Weyl algebra built from the same $F$). Then, for the same $\nu$, the expectation values of Theorem \ref{thmcurrentTorsion} here and of \cite[Thm. 1.2]{Gui23} there satisfy
\[
\langle J_\nu\rangle_{\mathrm{ferm}}=-\langle J_\nu\rangle_{\mathrm{sympl.\ bos.}}
\]
up to the metric-dependent normalization of the respective Szeg\H{o} kernels, which agree termwise since both are Green's operators for the same operator $\bar\partial_F$.
\end{corollary}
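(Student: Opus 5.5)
The plan is to compare the two expectation values term by term through the explicit formula of Theorem \ref{thmcurrentTorsion}. I would first write the fermionic side as the single self-loop contraction of Example \ref{extracemapworked}:
\[
\langle J_\nu\rangle_{\mathrm{ferm}}=\epsilon_{\mathrm{ferm}}\,\frac1\pi\int_X\tr\bigl[\nu\cdot(a_0(z,F)-\rho h^{-1}\pd_z(h\rho^{-1}))\bigr]dz .
\]
Here $\epsilon_{\mathrm{ferm}}=\pm1$ is the sign picked up by the Pfaffian convention \eqref{eqpfaffiandef} when the two legs $\beta_i,\gamma^j$ of $J_\nu$ are brought together and contracted against $\Qreg$. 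For the bosonic side I would quote \cite[Thm. 1.2]{Gui23}, which has the same shape with a sign $\epsilon_{\mathrm{bos}}$ coming from the permanent (symmetric) contraction. The Szeg\H{o} kernel in both cases is the Green's operator of the same $\bar\partial_F$ (Theorem \ref{thmszegoexists}), normalized by the same singular part $\Psing$. So the coefficients $a_0(z,F)$ coincide, and the counterterms $\rho h^{-1}\pd_z(h\rho^{-1})$ coincide, because Proposition \ref{propJnu} and \cite[Prop. 6.1]{Gui23} give the identical anomaly. This reduces the claim to showing $\epsilon_{\mathrm{ferm}}=-\epsilon_{\mathrm{bos}}$.

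The key step is to compute this relative sign directly in the local vacuum-module presentation \eqref{eqlocalclifford}. I would write the normally ordered product $\beta_i\gamma^j$ with its leg ordering fixed: $\beta$ in $F$ first, $\gamma$ in $F^\vee\otimes\wX$ second. The pairing $\pd_{\Qreg}(\beta_i,\gamma^j)$ requires identifying $\Qreg\in E\boxtimes E$ with an $\End(F)$-valued kernel via $E\simeq E^\vee\otimes\wX$, as in Definition \ref{defPsingQreg}. In the bosonic case this identification uses the symplectic form, which is antisymmetric, so $\omega(\gamma,\beta)=-\omega(\beta,\gamma)$. In the fermionic case it uses the symmetric $\langle-,-\rangle$, but the odd legs must be transposed past each other to reach the canonical order in which $\Qreg(z,z)$ is read as the matrix $a_0(z,F)^j_i$. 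That transposition contributes the Koszul sign $-1$ from \eqref{eqkoszul}. I would then check that the Wick identity of Theorem \ref{thmfermwick}, $\iota=c\circ e^{-\Psing}$, fixes the sign of $\Psing$ identically in both theories: both OPEs read $\beta_i(z)\gamma^j(w)\sim\delta_i^j/(z-w)$. Consequently the identification of $\Qreg(z,z)$ with $a_0(z,F)$ is the same in both theories, and the only discrepancy is the one transposition sign. This gives $\epsilon_{\mathrm{ferm}}=-\epsilon_{\mathrm{bos}}$.

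As a consistency check I would compare with the physical heuristic $\delta\log\det\bar\partial_F=-\delta\log\det(\bar\partial_F)^{-1}$. I would also note that $E_{\mathrm{sympl}}$ contains the pair $(F,F^\vee\otimes\wX)$ exactly once, matching $E$, so there is no factor-of-two discrepancy between the two traces.

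The main obstacle is the bookkeeping of signs. In particular, Example \ref{extracemapworked} claims the single fermionic contraction carries "the same overall sign as in the bosonic computation". That must be reconciled with the Koszul sign found above, so the precise convention for $\pd_K$ in \eqref{eqpfaffiandef}, and its bosonic counterpart in \cite[\S5.1]{Gui23}, has to be pinned down. I would settle it on the smallest test case, $r=1$ on a local disc, using the explicit anticommutator of Example \ref{exrankonelocal}, $X_\beta X_\gamma+X_\gamma X_\beta=(t-z)^{-2}$, against the bosonic commutator $[X_\beta,X_\gamma]=(t-z)^{-2}$. From that I would read off which of $\beta\gamma$ or $\gamma\beta$ each theory normal-orders positively relative to the propagator.
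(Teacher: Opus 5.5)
Your reduction to a relative sign $\epsilon_{\mathrm{ferm}}/\epsilon_{\mathrm{bos}}$ is fine. The step meant to produce $\epsilon_{\mathrm{ferm}}=-\epsilon_{\mathrm{bos}}$, however, fails.

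\textbf{Why the self-loop cannot give the sign.} In $\Wch$ the regular part is contracted by the same operator $\pd_P$, $P=\Psing+\Qreg$, with the same leg-ordering rule as the singular part. So any Koszul sign incurred by transposing the odd legs $\beta_i,\gamma^j$ to read $P$ ``in canonical order'' multiplies the $\Psing$- and $\Qreg$-contractions alike. You cannot use Theorem \ref{thmfermwick} to fix the $\Psing$-contraction to $+\delta_i^j/(z-w)$ in both theories and then attach an extra $-1$ to the $\Qreg$-contraction alone. Normal ordering subtracts exactly the singular part of the same $\beta$-before-$\gamma$ two-point function, and the remainder is $+a_0(z,F)$ in both theories.

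Your own bookkeeping says the same thing. The bosonic reading costs $\omega(\gamma,\beta)=-\omega(\beta,\gamma)$, and the fermionic one costs the Koszul $-1$ with a symmetric pairing. These are one and the same sign, and their compensation is precisely why both OPEs read $\beta_i(z)\gamma^j(w)\sim\delta_i^j/(z-w)$.

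Statistics only affect the $\gamma$-before-$\beta$ contraction. In your $r=1$ test, comparing $X_\beta X_\gamma+X_\gamma X_\beta=(t-z)^{-2}$ with $[X_\beta,X_\gamma]=(t-z)^{-2}$ gives coefficient $+1$ for the $\beta$-left contraction in both theories, and opposite signs only for the $\gamma$-left one. That contraction never enters the self-loop of $\nu^i_j\beta_i\gamma^j$. Carried out consistently, your computation therefore yields $\epsilon_{\mathrm{ferm}}=\epsilon_{\mathrm{bos}}$. This is exactly what the paper itself asserts in the proof of Theorem \ref{thmcurrentTorsion} and in Example \ref{extracemapworked}. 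The test case you propose would confirm this, not resolve the tension in your favour.

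\textbf{What the paper does, and where the sign actually sits.} The paper does not attempt such a computation. Its proof deduces the sign from $Z_{\mathrm{ferm}}=\det\dbar_F$ versus $Z_{\mathrm{sympl.\ bos.}}=\det(\dbar_E)^{-1}$, together with $\langle J_\nu\rangle=\frac{d}{ds}\log Z$ in both theories. It explicitly presents this as a path-integral remark rather than a theorem, noting that a chiral-algebraic proof would need a common resolution of $\CE$ and the chiral Weyl algebra.

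The statistics sign you are looking for lives in that identification, not inside $\Trch$. The first-order deformation of the action is $\int\gamma\nu\beta$, and rewriting it as $\nu^i_j\beta_i\gamma^j$ costs $-1$ only for Grassmann fields.

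Be aware that this sign and the $\det$ versus $\det^{-1}$ sign compensate each other. Take the finite-dimensional Gaussian model with weight $e^{-\gamma(A+sN)\beta}$ and $\langle\beta\gamma\rangle=A^{-1}$ for both statistics. Then $\frac{d}{ds}\log Z_{\mathrm{ferm}}=\tr(NA^{-1})$ and $\frac{d}{ds}\log Z_{\mathrm{bos}}=-\tr(NA^{-1})$, while the expectation of the bilinear $N\beta\gamma$ equals $\tr(NA^{-1})$ in both.

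So for the identical bilinear and identical kernel, the two traces agree. A relative minus sign between Theorem \ref{thmcurrentTorsion} and \cite[Thm. 1.2]{Gui23} could only reflect a difference in how the bosonic current or pairing is normalized there. In that case it would be read off by comparing the two closed formulas, not derived from the self-loop.
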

\begin{proof} 
 We do not give here an independent computation but a consequence of $Z_{\mathrm{ferm}}=\det\bar\partial_F$ versus $Z_{\mathrm{sympl.\ bos.}}=\det(\bar\partial_E)^{-1}$ (a Gaussian/Berezin integral over commuting oscillators produces the reciprocal determinant to the Grassmann integral over the same kernel, the standard bosonic/fermionic reciprocity of Gaussian path integrals). Since $\langle J_\nu\rangle=\frac{d}{ds}\log Z\big|_{s=0}$ in either theory (Remark after Theorem \ref{thmcurrentTorsion}), and $\log Z_{\mathrm{ferm}}=-\log Z_{\mathrm{sympl.\ bos.}}$ term-by-term in the variation (both being built from the trace of the resolvent of the  same elliptic operator $\bar\partial_F$, differing only by the sign convention forced by statistics), the two variations are negatives of one another. We mention  this as a remark rather than a theorem, since it compares two different chiral algebras ($\CE$ versus $\sA_{E_{\mathrm{sympl}}}$) through their common origin in $\bar\partial_F$ rather than through a map of chiral algebras. A chiral-algebraic (rather than path-integral) proof of Corollary \ref{corsignflip} would require constructing a common resolution of $\CE$ and $\sA_{E_{\mathrm{sympl}}}$, which we leave for a future paper (\S\ref{secconclusion}).
\end{proof}

%%%%%%%%%%%%%%%%%%%%%%%%%%%%%%%%%%%%%%%%%%%%%%%%%%%%%%%%%%%%%%%%%%%%%%%%%%%%%%%%%%%%%%%%%%%
\section{Comparison with the literature and concluding remarks}
\label{secconclusion}

We amplify \S\ref{ssecnovelty} on two points not covered there, and list directions left open.

\subsection{The role of the vertex superalgebra bundle model} 
Beyond situating $\CE$ within the standard family of chiral free-field theories, Theorem \ref{thmVOAiso} plays an important role in the present paper. The coordinate-change computations underlying Proposition \ref{propJnu} and Proposition \ref{propTmu} (frame- and coordinate-independence of the current and stress-tensor insertions) are most transparently vertex-algebraic statements, i.e.,  transformation laws of normally ordered products of primary fields under $\Aut(\sO)$, and Theorem \ref{thmVOAiso} is precisely what allows computing them on the $\mathscr V_F^r$ side (where they are standard OPE/Schwarzian computations, Appendix \ref{appVOAbundle}) and passing the result to $\CE$.

\subsection{On Corollary \ref{corsignflip}}
We mentioned in the proof of Corollary \ref{corsignflip} that our argument for the reciprocal sign between the fermionic and symplectic-boson torsion variation formulas is a path-integral heuristic rather than a chiral-algebraic theorem. A natural way to make it algebraic would be to realize both $\CE$ and $\sA_{E_{\mathrm{sympl}}}$ as (odd, resp.\ even) specializations of a single super chiral Weyl-Clifford algebra depending on a parameter interpolating between symmetric and antisymmetric pairings (a chiral analogue of the well-known unification of CAR and CCR algebras through a super vector space with a single, total, super-symmetric pairing), and to compare the two trace maps through the resulting common resolution. 
We leave this for a future paper.

\subsection{Directions for future work}
In this subsection we outline directions for future research. 
\begin{itemize}[leftmargin=1.6em]
\item[] \emph{The rank one (self-dual) case.} As discussed in Remark \ref{remrankone}, the free fermion attached to a spin structure $\wX^{1/2}$ requires no doubling and its chiral Clifford algebra, BV superalgebra, and trace map should be constructed directly from the symmetric pairing $\wX^{1/2}\otimes\wX^{1/2}\to\wX$, without the vertex-bundle model of \S\ref{secVOSA} (which used the isotropic decomposition $E=F\oplus F^\vee\otimes\wX$ essentially). This is of particular interest for the bosonization/higher-genus theta-function literature \cite{AGMV87}.

\medskip 
\item[] \emph{Higher-genus and non-compact/Fuchsian settings.} Every construction above works  for $X$ of any genus; extending the trace map to the case of a Riemann surface uniformized by an arbitrary Fuchsian group (rather than assuming compactness, replacing Hodge theory by $L^2$-cohomology and the Selberg/Patterson-Sullivan theory of the relevant automorphic Green's functions) is a natural next step, in the spirit of the higher-genus and automorphic-form extensions of the chiral-algebra formalism pursued elsewhere.

\medskip 
\item[] \emph{Higher rank current insertions and iterated sewing.} It would be of interest to compute $\Trch$ on iterated/multiple insertions of $J_\nu$ and $\widetilde T_\mu$, and to compare the resulting higher variations of analytic torsion with formulas obtained by sewing/plumbing Riemann surfaces.
\end{itemize}

The results of this paper is also useful in other areas of mathematical physics 
\cite{Frohlich2009gb, RSZ} and condensed matter theory \cite{zub1, zub2, zub3, zub4, zub5, zub6, zub7, zub8, zub9, kmmzz}. 

\appendix

%%%%%%%%%%%%%%%%%%%%%%%%%%%%%%%%%%%%%%%%%%%%%%%%%%%%%%%%%%%%%%%%%%%%%%%%%%%%%%%%%%
\section{Proof of the fermionic chiral PBW theorem}
\label{appPBW}

We prove Theorem \ref{thmchiralPBW}. Work locally on $X=\CC$ with $E$ trivialized by an odd frame $\{e_i\}_{i=1,\dots,2r}$, relabelling $\{\beta_i\}\cup\{\gamma^j\}$ by a single index for brevity, with pairing $\langle e_i,e_k\rangle=\omega_{ik}$ (symmetric, $\omega_{ik}=\omega_{ki}$). By Definition \ref{defchiralcliff} and \eqref{eqchiralenvprod}, sections of $\CE$ are spanned, exactly as in \cite[\S3.3]{Gui23} (whose local formulas we reuse, since Definition \ref{defLflat} and the chiral envelope construction of \S\ref{ssecchiralcliff} are formally identical to the non-super case), by expressions \eqref{eqlocalclifford}. We must show that the operators
\[
X_{i,n}:=\frac{n!\,e_i}{(t-z)^{n+1}}
\]
satisfy, in $\CE$, the anticommutation relation
\begin{equation}\label{eqPBWacr}
X_{i,n}X_{k,m}+X_{k,m}X_{i,n}=\frac{\omega_{ik}}{(t-z)^{n+m+2}}\cdot(\text{a central, i.e., }\wX\text{-valued, correction}),
\end{equation}
with the sign on the left-hand side (an anticommutator, rather than the commutator of \cite[Rem. 3.7]{Gui23}) forced by the odd parity of $e_i,e_k$.

By the same computation as in \cite[\S3.2, explicit example]{Gui23} (there carried out for a general symplectic pairing $\omega_{ij}=-\omega_{ji}$; we repeat it here for our symmetric $\omega_{ik}=\omega_{ki}$, tracking the extra sign from the odd parity of $e_i,e_k$), one has, for the basic case $n=m=0$,
\[
\iota\Bigl(\frac{e_i}{t-z_1}\cdot\frac{e_k}{t-z_2}|0\rangle\Bigr)=\frac{e_i}{t-z_1}|0\rangle_{z_1}\boxtimes\frac{e_k}{t-z_2}|0\rangle_{z_2}-(-1)^{|e_i||e_k|}\frac{e_k}{t-z_2}|0\rangle_{z_2}\boxtimes\frac{e_i}{t-z_1}|0\rangle_{z_1}\ -\ \frac{\omega_{ik}}{z_1-z_2},
\]
the extra sign $(-1)^{|e_i||e_k|}=-1$ (both $e_i,e_k$ odd) relative to the bosonic computation arising because passing $\frac{e_k}{t-z_2}|0\rangle_{z_2}$ across $\frac{e_i}{t-z_1}|0\rangle_{z_1}$ inside the vacuum module $\sU(\sL^\natural_{X^2})$ picks up the Koszul sign of \eqref{eqkoszul}. Consequently, where the bosonic computation of \cite[\S3.2]{Gui23} produces the  commutator $[X_{i,0},X_{k,0}]=\omega_{ik}/(z_1-z_2)$-type term, our computation produces the  anti-commutator 
\[
\{X_{i,0},X_{k,0}\}=\frac{\omega_{ik}}{(t-z)^2}, 
\]
as an identity in $\CE$, pushed to the diagonal. 
For general $n,m$, differentiate $(n,m)$ times in the respective points, using the elementary identity $\pd_{z_1}^n\pd_{z_2}^m\frac1{z_1-z_2}=(-1)^n\frac{(n+m)!}{(z_1-z_2)^{n+m+1}}$ (also used in the proof of Theorem \ref{thmfermwick}, Appendix \ref{appwick}). This gives \eqref{eqPBWacr} in general, with an explicit, computable central correction supported (as a function of $t-z$) only at the single point $t=z$, i.e., lying, in the PBW filtration of $\CE$, in filtration degree $\le0$ (it is built purely from the central $\wX$, with no further $L$-factor), one degree lower than the naive filtration degree $2$ of the product $X_{i,n}X_{k,m}$ itself.

Two points on the topology of this filtration are worth making explicit. First, each correction term $\omega_{ik}/(t-z)^{n+m+2}$ is a single Laurent monomial (a finite-order pole at the single point $t=z$), i.e., an element of the finite-rank coherent sheaf $\wX$ itself, not a limit of any kind. No convergence question arises for these corrections, only bookkeeping of which PBW-filtration stage they occupy. Second, the filtration stages $(\CE)_n$ are, by Remark \ref{remclosedness}, closed subspaces of the ambient Fr\'echet space $\sU(L^\flat)$ at every finite stage.  Thus the quotients $(\CE)_n/(\CE)_{n-1}$ defining $\gr(\CE)$ are again Fr\'echet (quotients of Fr\'echet spaces by closed subspaces are Fr\'echet), and the identification $\gr(\CE)\simeq\bigwedge^\bullet L$ of \eqref{eqchiralPBW} is an isomorphism of Fr\'echet-space-valued sheaves at each fixed wedge degree $k\le2r$ - consistently with \S\ref{ssecanalyticcat}, where this same class of coherent, finite-rank $\DX$-modules was fixed as the ambient category.  

Passing to $\gr(\CE)=\bigoplus_n(\CE)_n/(\CE)_{n-1}$, the correction term in \eqref{eqPBWacr}, being of strictly lower filtration degree, vanishes identically.  Thus the images $\bar X_{i,n}$ of $X_{i,n}$ in $\gr(\CE)$ satisfy the anticommutation relation $\bar X_{i,n}\bar X_{k,m}+\bar X_{k,m}\bar X_{i,n}=0$. They generate, subject only to this relation, the exterior algebra $\bigwedge^\bullet L$ on the (now not just chirally, anticommuting) generators, under the identification of \eqref{eqlocalclifford}. This proves \eqref{eqchiralPBW}. That the filtration is exhaustive with $(\CE)_0\simeq\wX$, $(\CE)_1\simeq(\CE)_0\oplus L$ is immediate from Definition \ref{defchiralcliff} (the chiral envelope of a central extension has associated graded starting exactly this way, by the universal property \eqref{eqchiralenvprod}, for any Lie$^*$ superalgebra). This completes the proof of Theorem \ref{thmchiralPBW}. $\hfill\qed$

%%%%%%%%%%%%%%%%%%%%%%%%%%%%%%%%%%%%%%%%%%%%%%%%%%%%%%%%%%%%%%%%%%%%%%%%%%%%%%%%%%%%
\section{The rank two fermion VOSA bundle: construction details}
\label{appVOAbundle}

We complete the construction announced in \S\ref{ssecVOSAbundle}, following the twisting formalism of Frenkel-Ben-Zvi \cite{FBZ04} as reviewed for the bosonic case in \cite[App. 7.1]{Gui23}. Every step is applied to $V_F$ (Definition \ref{defFockspace}) once the underlying vector space operations (symmetric algebra, commutators) are replaced by their super counterparts.

\subsection{The $\Aut(\sO)$-action}
Let $\sO=\CC[[z]]$. $V_F$ is conformal (Remark \ref{remconformal}),  thus carries an action of $\mathrm{Der}_0\sO=z\CC[[z]]\pd_z$ via $z^{j+1}\pd_z\mapsto-L_j$, where $L_j$ are the modes of $T_F$; since the $L_0$-grading of $V_F$ (by fermion number/conformal weight, using \eqref{eqVasexterior}) is bounded below, $\mathrm{Der}_+\sO=z^2\CC[[z]]\pd_z$ acts locally nilpotently. For $f\in\Aut(\sO)$, $f(z)=a_1z+a_2z^2+\cdots$ ($a_1\ne0$), write $f(z)=\exp(\sum_{i>0}v_iz^{i+1}\pd_z)\,a_1^{z\pd_z}\cdot z$ and set
\begin{equation}\label{eqRaction}
R(f):=\exp\Bigl(-\sum_{i>0}v_iL_i\Bigr)a_1^{-L_0}\in\End(V_F),
\end{equation}
exactly as in \cite[App. 7.1]{Gui23}. This is well defined because each $L_i$ ($i>0$) acts locally nilpotently and $L_0$ is diagonalizable with integer (here: half-integer-free, since our weights are $1,0$) eigenvalues bounded below.

\subsection{The $\GL_r(\sO)$-action}
Since $F$ has rank $r$, $V_F$ carries the level-one $\widehat{\gl}_r$-action of Remark \ref{remconformal}: for $\sigma(t)\in\GL_r(\sO)$, writing $\exp\bigl(\sum_{n\ge0,\,1\le a,b\le r}f^a_{b,n}(y)t^nE^a_b\bigr)=\sigma(y+t)$ ($E^a_b$ the elementary matrices), set
\[
R(\sigma_y):=\exp\Bigl(-\sum_{n\ge0,a,b}f^a_{b,n}(y)\,(J^a_b)_n\Bigr)\in\End(V_F),
\]
$(J^a_b)_n$ the modes of the currents $J^a_b=:\gamma^a\beta_b:$ of Remark \ref{remconformal}. Combined with \eqref{eqRaction}, this gives an action of $\Aut(\sO)\ltimes\GL_r(\sO)$ on $V_F$.

\subsection{The bundle}
Let $\sP_r\to X$ be the holomorphic $\GL_r$-frame bundle of $F$ and $\widehat{\sP_r}\to X$ the corresponding principal $\Aut(\sO)\ltimes\GL_r(\sO)$-bundle, whose fibre at $x\in X$ consists of pairs $(z,\mathsf s)$, $z$ a formal coordinate at $x$ and $\mathsf s$ a trivialization of $F$ near $x$. Definition \eqref{eqcalVdef} sets $\mathscr V_F:=\widehat{\sP_r}\times_{\Aut(\sO)\ltimes\GL_r(\sO)}V_F$. Local sections transform, for $(z_y,\mathsf s)\sim(z_y,\tilde{\mathsf s};R(\sigma_y)^{-1}v)$ under a frame change $\sigma$, and $(z_y,\mathsf s;v)\sim(w_y,\mathsf s;R(\rho_y)^{-1}v)$ under a coordinate change $w=\rho(z)$, $w_y(t)=\rho(t+y)-\rho(y)=:\rho_y(t)$, exactly as in \cite[App. 7.1]{Gui23}. By \cite[\S6]{FBZ04}, $\mathscr V_F$ is naturally a left $\DX$-module and $\mathscr V_F^r$ (the associated right module) carries a canonical chiral superalgebra structure, whose chiral product is, on primary (weight-$(1,0)$) fields, exactly the operator product expansion \eqref{eqOPE}.

\begin{remark}\label{remprojconn}
{\it Genus dependence: a projective connection, not a cohomological vanishing.}
Since $T_F\ne0$ (Remark \ref{remconformal}), the action \eqref{eqRaction} of the full $\Aut(\sO)$ (not only its M\"obius subgroup, generated by $L_{-1},L_0,L_1$) involves the higher Virasoro modes $L_i$, $i\ge2$, and the resulting $R(f)$ do not, by themselves, satisfy the cocycle condition needed to glue \eqref{eqcalVdef} into a bundle on a curve $X$ of genus $\ge1$ without further input. This is the standard anomaly of the Virasoro-twisted construction, present already in the bosonic case and handled there, exactly as here, by fixing an auxiliary projective connection on $X$ (equivalently, a coherent choice of local coordinate at every point, agreeing with the given holomorphic structure to second order) relative to which \eqref{eqRaction} is corrected by the standard Virasoro central term; see \cite[\S 8]{FBZ04} or \cite[App. 7.1]{Gui23} for the (identical, statistics-independent) construction. Projective connections always exist on any Riemann surface (they form a torsor over $H^0(X,\wX^{\otimes2})$, constructed by patching the trivial connection on a coordinate cover via a partition of unity and correcting by the Schwarzian derivatives of the transition functions) and impose no cohomological vanishing hypothesis whatsoever. In particular no vanishing of $H^1(X,\Omega^1_X)=H^1(X,\wX)$ is needed or possible, since this group is $1$-dimensional for every compact Riemann surface, of every genus, by Serre duality ($H^1(X,\wX)\simeq H^0(X,\sO_X)^*=\CC$). Different choices of projective connection give isomorphic bundles $\mathscr V_F$, related by an explicit twist by a power of the (projective-connection-independent) determinant line of $F$. We do not need this comparison below, since Theorem \ref{thmVOAiso} is stated for a fixed choice.
\end{remark}

%%%%%%%%%%%%%%%%%%%%%%%%%%%%%%%%%%%%%%%%%%%%%%%%%%%%%%%%%%%%%%%%%%%%%%%%%%%%%%%%%%%%
\section{Proof that the chiral envelope is isomorphic to the VOSA bundle's chiral algebra}
\label{appiso}

We prove Theorem \ref{thmVOAiso}. Both $\CE$ and $\mathscr V_F^r$ are chiral superalgebras generated by their weight-one (equivalently: PBW-filtration-degree-one, equivalently: $L_0$-eigenvalue $\le1$-primary) pieces, both canonically identified with $E$. For $\CE$ this is Theorem \ref{thmchiralPBW} ($(\CE)_1\simeq(\CE)_0\oplus L$, and $L=E_\sD$ recovers $E$ as the associated $\sO_X$-module of primary vectors). For $\mathscr V_F^r$ this is \eqref{eqVasexterior} restricted to the weight-$\le1$ piece, which is exactly $F^{\mathrm{diff}}_{-1}\oplus(F^\vee\otimes\wX)^{\mathrm{diff}}_0\simeq E$ (the lowest negative modes of $\beta_i,\gamma^j$).

A chiral superalgebra generated by a $\sD_X$-submodule $\sG$ of its degree-one piece is determined, as a sheaf with chiral product, by (i) the OPE/chiral bracket of $\sG$ with itself and (ii) the coordinate change formula for sections of $\sG$ (since (i) determines the chiral product of any two generators, hence, by iteration, of any two elements presented as iterated products of generators (Lemma \ref{lempresentation}) and (ii) determines how a local presentation glues to a global section). We check (i) and (ii) agree for $\CE$ and $\mathscr V_F^r$.

\emph{(i) OPE.} On both sides, the singular part of the OPE/chiral bracket of two generators $e,e'\in E$ is, by construction, $\langle e,e'\rangle/(z_1-z_2)$. For $\CE$ this is \eqref{eqmuLiedef} (Lemma \ref{lemLiestarverify}); for $\mathscr V_F^r$ this is \eqref{eqOPE} together with bilinearity/the definition of $\langle-,-\rangle$ via \eqref{eqbetagammapairing}. The fermionic Wick theorem (Theorem \ref{thmfermwick}) shows that this singular OPE, together with the (statistics-forced) Pfaffian combinatorics for higher contractions, determines the entire chiral product of $\CE$ on presented elements. The analogous statement for $\mathscr V_F^r$ is the standard reconstruction theorem for vertex superalgebras from a set of mutually local generating fields with prescribed OPE (\cite[\S7.1]{Gui23}, \cite{FBZ04}), applied to the free-fermion generators $\beta_i,\gamma^j$. Since both reconstructions start from the identical OPE data, they agree.

\emph{(ii) Coordinate change.} For $\mathscr V_F^r$, the coordinate change formula is \eqref{eqRaction}, or in the form used by \cite[eq. (6.6.1$'$)]{FBZ04}, $\rho'(z)(\pd_w+L_{-1})R(\rho_z)^{-1}=R(\rho_z)^{-1}L_{-1}$, which for the weight-$(1,0)$ pair $(\beta,\gamma)$ reduces, upon exponentiating, to exactly the Schwarzian-derivative transformation law verified explicitly in Propositions \ref{propJnu} and \ref{propTmu} (the computation there is the $n=1$, resp.\ derivative, instance of this formula, carried out for the specific bilinears $\beta\gamma$ and $\beta\pd\gamma$). Explicitly, writing $\theta(z)=dw/dz$ for the coordinate change $w=w(z)$, the two instances used below are
\[
\beta_w\gamma_w\,dw=\beta_z\gamma_z\,dz+\tfrac12\theta'\,dz,\qquad
\beta_w\pd_w\gamma_w\,dw^{\otimes2}=\beta_z\pd_z\gamma_z\,dz^{\otimes2}+\Bigl(\tfrac16\tfrac{\theta''}\theta-\tfrac14\bigl(\tfrac{\theta'}\theta\bigr)^2\Bigr)dz^{\otimes2},
\]
the first-order and (the beginning of the) second-order terms of the general formula, matching term-by-term the statements of Propositions \ref{propJnu} and \ref{propTmu}. No further terms of the general Virasoro formula are needed for either application. For $\CE$, the coordinate change formula is induced by the local trivialization $\tau^z_U$ of Theorem \ref{thmchiralPBW}, i.e., by how the presentation \eqref{eqlocalclifford} depends on $z$; since $\tau^z_U$ is built (Appendix \ref{appPBW}) entirely from the pairing $\langle-,-\rangle$ and the local coordinate, with no further choice, its coordinate dependence is likewise governed by the identical two formulas above, applied to the same bilinears $\beta\gamma,\beta\pd\gamma$ built from $\langle-,-\rangle$. Hence the two coordinate change formulas agree, term by term, as displayed.

Since $E\subset\CE$ and $E\subset\mathscr V_F^r$ generate as chiral superalgebras, and (i), (ii) match, the identity map on $E$ extends to an isomorphism of chiral superalgebras $\CE\xrightarrow{\sim}\mathscr V_F^r$, proving Theorem \ref{thmVOAiso}. $\hfill\qed$

%%%%%%%%%%%%%%%%%%%%%%%%%%%%%%%%%%%%%%%%%%%%%%%%%%%%%%%%%%%%%%%%%%%%%%%%5
\section{Proof of the fermionic Wick theorem}
\label{appwick}

We prove Theorem \ref{thmfermwick}, following the strategy of \cite[App. 7.3]{Gui23} and  mentioning exactly where the extra Koszul sign, absent there, enters.

Recall the diagram (\S\ref{ssecchiralcliff})
\[
\sU_X\boxtimes\sU_X\xrightarrow{\ c\ }\sU(\sL^\natural_{X^2})/\sU(\sL^\natural_{X^2})p_2^*\sL^\natural_X\otimes\sU(\sL^\natural_{X^2})/\sU(\sL^\natural_{X^2})p_1^*\sL^\natural_X\xleftarrow{\ \iota\ }\sU_{X^2},
\]
and write, for $v_1\otimes v_2=\bigl(\frac{a_1}{(t-z_1)^{k_1+1}}\cdots\frac{a_n}{(t-z_1)^{k_n+1}}\bigr)\otimes\bigl(\frac{b_1}{(t-z_2)^{l_1+1}}\cdots\frac{b_m}{(t-z_2)^{l_m+1}}\bigr)\in\sU_{X^2}$ (all $a_i,b_j\in E$ odd),
\[
\iota(v_1\otimes v_2)=\sum_{I\subset\{1,\dots,n\}}\Bigl(\prod_{i\notin I}\frac{a_i}{(t-z_1)^{k_i+1}}\Bigr)|0\rangle_{z_1}\boxtimes\Bigl(\prod_{i\in I}\frac{a_i}{(t-z_1)^{k_i+1}}\Bigr)\cdot\Bigl(\prod_{j=1}^m\frac{b_j}{(t-z_1)^{l_j+1}}\Bigr)|0\rangle_{z_2}.
\]
To bring each surviving $\frac{a_i}{(t-z_1)^{k_i+1}}$ ($i\in I$) past the $b_j$'s and extract the contraction, we exchange adjacent odd factors; each such exchange contributes the extra sign $(-1)^{|a_i||b_j|}=-1$ absent from the bosonic computation of \cite[App. 7.3]{Gui23}.  Specifically, for the single-contraction term ($I=\{i\}$, one factor $a_i$ paired against one factor $b_j$), exchanging $\frac{a_i}{(t-z_1)^{k_i+1}}$ and $\frac{b_j}{(t-z_1)^{l_j+1}}$ once (an odd/odd transposition) gives
\begin{align*}
\frac{\langle a_i,b_j\rangle}{(t-z_1)^{k_i+1}(t-z_2)^{l_j+1}}\Big|_{z_2}&=-\frac1{l_j!}\cdot(-k_i-1)\cdots(-k_i-l_j)\cdot\frac1{(z_2-z_1)^{k_i+l_j+1}}\\
&=-\frac{(k_i+l_j)!}{k_i!l_j!}(-1)^{k_i+1}\frac1{(z_1-z_2)^{k_i+l_j+1}},
\end{align*}
the overall extra minus sign (relative to \cite[App. 7.3]{Gui23}) coming from the single odd/odd exchange. On the other hand, the corresponding Wick (Pfaffian) contraction of a 
 single pair, by \eqref{eqpfaffiandef} with $m=1$,
\begin{align*}
\pf(K)\bigl(\tfrac{a_i}{(t-z_1)^{k_i+1}}\wedge\tfrac{b_j}{(t-z_2)^{l_j+1}}\bigr)&=\partial_K\Bigl(\frac{a_i}{(t-z_1)^{k_i+1}},\frac{b_j}{(t-z_2)^{l_j+1}}\Bigr)=\frac1{k_i!l_j!}\pd_{z_1}^{k_i}\pd_{z_2}^{l_j}\frac1{z_1-z_2}\\
&=\frac{(k_i+l_j)!}{k_i!l_j!}(-1)^{k_i}\frac1{(z_1-z_2)^{k_i+l_j+1}},
\end{align*}
which, multiplied by the sign convention $\mathrm{sgn}(\sigma)=-1$ for the single transposition exchanging the order of the pair implicit in reading $\iota(v_1\otimes v_2)$ against the ``already contracted'' order used to define $\pf$ (a fixed, once-and-for-all bookkeeping choice, exactly as $\mathrm{sgn}(\sigma)$ in \eqref{eqpfaffiandef} tracks it for higher contractions), reproduces exactly the extra factor of $(-1)$ found above. Thus, at the level of a single contraction, the two computations agree once the Pfaffian sign convention of \eqref{eqpfaffiandef} is used consistently. Iterating over all pairs contracted in $I$ (each contributing its own transposition sign, exactly as tracked by $\mathrm{sgn}(\sigma)$ for a general permutation $\sigma\in S_{2m}$ in the definition of the Pfaffian) gives, in place of \cite[eq. (7.1)]{Gui23},
\begin{equation}\label{eqiotaPfaffian}
\iota(v_1\otimes v_2)=c\bigl(e^{-\Psing}v_1\boxtimes v_2\bigr),
\end{equation}
with $e^{-\Psing}$ now built from the Pfaffian (rather than the permanent) contraction operator of \eqref{eqpfaffiandef}, exactly as needed for it to act correctly on the exterior algebra $\bigwedge^\bullet L$ (a permanent-type, unsigned sum would not be compatible with the relations $a\wedge a=0$, $a\wedge b=-b\wedge a$ of $\bigwedge^\bullet L$, whereas the alternating Pfaffian sum is, by the same reasoning as for the ordinary exterior algebra pairing induced by a bilinear form).

Choosing $N\gg0$ with $\iota\bigl((z_1-z_2)^Ne^{-\Psing^{\otimes}}(v_1\otimes v_2)\bigr)=c\bigl((z_1-z_2)^N\,v_1\boxtimes v_2\bigr)$ (possible by the same finite-order-pole argument as \cite[App. 7.3]{Gui23}, using \eqref{eqiotaPfaffian}), we obtain, exactly as there,
\[
\mu\bigl(\eta\cdot v_1dz_1\boxtimes v_2dz_2\bigr)=\mu_\omega\Bigl(\frac\eta{(z_1-z_2)^N}\Bigr)\cdot\bigl((z_1-z_2)^Ne^{\Psing^{\otimes}}(v_1\otimes v_2)\bigr)=\mu_{\wedge}\bigl(\eta\cdot e^{\Psing}v_1dz_1\boxtimes v_2dz_2\bigr),
\]
which is \eqref{eqfermwick} after applying $\tau^z_U$ throughout (the local trivialization commutes with all the operations above, since it is itself built from the same pairing data). This proves Theorem \ref{thmfermwick}. $\hfill\qed$

\begin{remark}{\it Sign tracking is uniform across the gluing.} 
The computation above is carried out on a single coordinate chart $U$. The Koszul sign $(-1)^{|a_i||b_j|}$ appearing at each transposition is fixed by the (global, coordinate-independent) parity of $a_i,b_j$ as sections of $E$, via the single convention \eqref{eqkoszul} fixed once in \S\ref{ssecanalyticcat}. It does not depend on which chart of a partition of unity  presentation (Lemma \ref{lempresentation}) the computation is carried out on. Consequently, applying \eqref{eqfermwick} chart by chart to build the global $\sW^{\mathbf v}$ of Definition \ref{defnormalordering} (as in the proof of Proposition \ref{propnormalorderingwelldef}) introduces no further sign choice beyond the ones already fixed here. Every transposition occurring in any local presentation $\tilde v$ is between sections of the same globally-defined odd bundle $E$, hence governed by the same rule,  thus the local computation of this appendix is exactly what is used without further sign bookkeeping, at every stage of the \v{C}ech-type gluing of \S\ref{ssecchiralcliff} and \S\ref{secszego}.
\end{remark}

%%%%%%%%%%%%%%%%%%%%%%%%%%%%%%%%%%%%%%%%%%%%%%%%%%%%%%%%%%%%%%%%%%%%%%
\section{Table of sign conventions}
\label{appsigns}

\begin{center}
\begin{tabular}{|p{5.6cm}|p{7.6cm}|}
\hline
\textbf{Object} & \textbf{Convention} \\
\hline
Tensor flip $a\otimes b\mapsto b\otimes a$ & $(-1)^{|a||b|}$, eq. \eqref{eqkoszul} \\
\hline
Parity of $\Pi$ & odd; $\Pi:V\to\Pi V$ is an odd map \\
\hline
Pairing $\langle-,-\rangle$ on $E$ (Def. \ref{defpairing}) &  symmetric, $\langle e,e'\rangle=\langle e',e\rangle$, no sign \\
\hline
Lie$^*$ super-antisymmetry (Def. \ref{defLiestarsuper}) & $\mu(f\cdot a\boxtimes b)=-(-1)^{|a||b|}\sigma_{1,2}\mu(f^{\mathrm{sw}}\cdot b\boxtimes a)$ \\
\hline
Chiral Clifford grading (Lem. \ref{lemCliffordcompat}) & $k$ factors of $L$ $\Rightarrow$ parity $k\bmod2$ \\
\hline
Parity of $\HH^0(X,E)$ in $\OBV$ (Def. \ref{defOBV}) & parity of $E$ (odd, for us) \\
\hline
Parity of $\HH^1(X,E)$ in $\OBV$ (Def. \ref{defOBV}) & \emph{opposite} to parity of $E$ (even, for us) - Lemma \ref{lemgradingforce} \\
\hline
$\DBV$ (Def. \ref{defOBV}) & odd operator; $\DBV^2=0$ (Prop. \ref{propBVaxioms}) \\
\hline
BV bracket sign (eq. \eqref{eqBVbracket}) & $\{a,b\}=\DBV(ab)-(\DBV a)b-(-1)^{|a|}a\DBV b$ \\
\hline
Pfaffian contraction (eq. \eqref{eqpfaffiandef}) & $\mathrm{sgn}(\sigma)$ for $\sigma\in S_{2m}$, vs.\ permanent (no sign) in the bosonic case of \cite{Gui23} \\
\hline
Super-cyclicity of the trace (Lem. \ref{lemcyclicity}) & $\trom\mathbf p\Wch(ab)=(-1)^{|a||b|}\trom\mathbf p\Wch(ba)$ \\
\hline
\end{tabular}
\end{center}

%%%%%%%%%%%%%%%%%%%%%%%%%%%%%%%%%%%%%%%%%%%%%%%%%%%%%%%%%%%%%%%%%%%%%%%%%%%%%%%%
\section*{Acknowledgements}

The author is supported by the Institute of Mathematics, Academy of Sciences
of the Czech Republic (RVO 67985840). 

\medskip
\noindent\textbf{Data Availability.}
Data sharing is not applicable to this article as no datasets were generated
or analysed during the current study.

\medskip
\noindent\textbf{Declarations}

\medskip
\noindent\textbf{Conflict of interest.}
The author has no conflicts of interest to declare that are relevant to the
content of this article.

%%%%%%%%%%%%%%%%%%%%%%%%%%%%%%%%%%%%%%%%%%%%%%%%%%%%%%%%%%%%%%%%%

\end{document}